\documentclass[a4paper, 11pt]{amsart} 

\usepackage{xcolor}
\usepackage{amsmath}
\usepackage{amssymb}			
\usepackage{amsfonts}
\usepackage{amsthm}
\usepackage{mathtools}
\usepackage{enumitem}

\usepackage{multirow}
\usepackage{mathabx,epsfig}

\usepackage{verbatim}

\usepackage[colorlinks=true, urlcolor=blue, linkcolor=red]{hyperref}
\usepackage{tikz-cd}

\usepackage{tikz}
  \usetikzlibrary{matrix}
  \usetikzlibrary{fit}
  \usetikzlibrary{backgrounds}
  \usetikzlibrary{arrows}
  \usetikzlibrary{shapes}

\def\QQ{{\mathbb Q}}
\def\AA{{\mathbb A}}
\def\CC{{\mathbb C}}
\def\RR{{\mathbb R}}

\def\ZZ{{\mathbb Z}}
\def\FF{{\mathbb F}}

\def\GG{{\mathbb G}}

\def\fp{{\mathfrak p}}

\def\GL{{\mathrm{GL}}}
\def\SL{{\mathrm{SL}}}

\def\Fi{{F^\mathrm{inn}}}

\def\hG{\widehat{G}}
\def\hGd{\widehat{G}^{\mathrm{der}}}
\def\hZ{\widehat{Z}}

\title[Irreducibility and Monodromy of Galois Representations]{Irreducibility and Monodromy of Automorphic Galois Representations of $\GL(4)$} 
\author{Alireza Shavali} 

\theoremstyle{plain}
\newtheorem{theo}{Theorem}[section]
\newtheorem{prop}[theo]{Proposition}
\newtheorem{cor}[theo]{Corollary}
\newtheorem{lemma}[theo]{Lemma}

\theoremstyle{remark}

\newtheorem{rem}[theo]{Remark}

\theoremstyle{definition}
\newtheorem{defi}[theo]{Definition}
\newtheorem{exmp}[theo]{Example}

\begin{document} 
\maketitle

\begin{abstract}
    We prove that over totally real fields, the $p$-adic Galois representations attached to non-self-dual regular algebraic cuspidal automorphic representations of $\GL(4)$ are irreducible. We then develop the theory of extra-twists in a general setting and use it to compute the monodromy group (over $\QQ$) of these Galois representations, in both self-dual and non-self-dual settings, and prove $p$-adic and residual big image results.
\end{abstract}

{\small \tableofcontents}
\renewcommand{\baselinestretch}{1}\normalsize

\section{Introduction}

For a number field $K$, the Langlands program predicts a connection between the representation theory of the locally compact group $\GL_n(\AA_K)$ and representation theory of the profinite group $\Gamma_K=\mathrm{Gal}(\overline{K}/K)$. More precisely, one expects a correspondence between algebraic isobaric automorphic representations of $\GL_n(\AA_K)$ and $n$-dimensional continuous semisimple representations of $\Gamma_K$. One expects to be able to translate different features of the objects from one world to the other. This translation is often governed by the functoriality principles introduced by Langlands. Let us describe two instances of this philosophy that we are going to be concerned with in this paper.

On the automorphic side, one has a way of constructing automorphic representations of $\GL_{n}$ from automorphic representations of smaller groups by means of parabolic induction. For a partition $n=n_1+\cdots+n_k$ into positive integers and automorphic representations $\pi_i$ of $\GL_{n_i}$, one constructs the isobaric sum $\pi=\pi_1 \boxplus \cdots \boxplus \pi_k$ as a subquotient of the parabolic induction. Cuspidal automorphic representations are essentially the ones that cannot be found in parabolic inductions. Therefore, they are the building blocks of the isobaric decomposition. On the Galois side, one has the familiar notion of irreducible representations which serve as building blocks of the direct sum decomposition of semisimple representations. Hence, one naturally expects the notion of cuspidality on the automorphic side to translate to irreducibility on the Galois side.

For the second example, we consider the notion of monodromy. Let 
$$\rho:\Gamma_K \rightarrow \GL_n(\QQ_p)\subset \GL_n(\overline{\QQ_p})$$
be a Galois representation. The $\overline{\QQ_p}$-monodromy group of $\rho$ is the Zariski closure $\mathcal{G}$ of $\rho(\Gamma_K)$ in $\GL_{n,\overline{\QQ_p}}$ and the (arithmetic) monodromy group of $\rho$ is the Zariski closure $G$ of $\rho(\Gamma_K)$ in $\GL_{n,\QQ_p}$. If $\pi$ comes from a smaller subgroup of $\GL_n$ via Langlands functoriality, then one expects the Galois representation to factor through the Langlands dual of that group. Therefore, one expects to be able to read $\mathcal{G}$ from the functoriality behavior of $\pi$. Predicting $G$ from the automorphic data is more tricky. In the case of modular forms of weight 2, Ribet first realized that this is possible using the so-called inner-twists of the form \cite{ribet1980twists}. 
\smallskip\\
\noindent\textbf{Main results.} Let us explain what is already known and what we prove in this paper. Let $K$ be a totally real field and $\pi$ be a regular algebraic cuspidal automorphic representation of $\GL_n(\AA_K)$. Then $E=\QQ(\pi)$ is known to be a number field. It is known by the work of Harris-Lan-Taylor-Thorne \cite{harris2016rigid} or Scholze \cite{scholze2015torsion}, that there is a compatible family
$
\{\rho_{\pi,\fp}:\Gamma_K \rightarrow \GL_n(\overline{E_\fp})\}_{\fp}
$
of $E$-rational $p$-adic Galois representations associated with $\pi$. Since $\pi$ is cuspidal, one expects these Galois representations to be irreducible. We refer to this prediction as the irreducibility conjecture. If $\pi$ is essentially self-dual, there has been a lot of work in the direction of this conjecture, see \cite{ramakrishnan2013decomposition}, \cite{calegari2013irreducibility}, \cite{xia2019irreducibility}, and \cite{hui2023monodromy} for small $n$, and see \cite{barnet2014potential}, \cite{patrikis2015automorphy}, and \cite{feng2025irreducibility} for higher dimensional cases. We refer to Table 1 in \cite{feng2025irreducibility} for a summary of the known results.  
If $\pi$ is not essentially self-dual, the main issue, at least in the low dimensional cases $n=3$ or $4$, is that the local-global compatibility at $\ell=p$ for these Galois representations is only known under assumptions that are stronger than irreducibility (see \cite[Theorem 1.0.6]{a2024rigidity}). This prevents one from applying potential modularity arguments to the direct summands of $\rho_{\pi,\fp}$. In fact, conditional on the local-global compatibility at $\ell=p$ (which one expects to be established soon), Calegari and Gee show the irreducibility of $\rho_{\pi,\fp}$ in the non-essentially-self-dual case for $n=3$ and $4$ \cite[\S 8]{calegari2013irreducibility} (they implicitly use local-global compatibility in \cite[Lemma 8.5]{calegari2013irreducibility}). Note that there is an error in that paper, but we believe that it does not affect the proof in the non-essentially-self-dual case. 

Böckle and Hui generalized some results of Serre on abelian Galois representations and used it to prove the irreducibility in the non-essentially-self-dual case for $n=3$ \cite{bockle2025weak}. Inspired by their work, the ideas in \cite{calegari2013irreducibility} and \cite{ramakrishnan2013decomposition}, and the classification of automorphic representations of $\GL_4$ in \cite{asgari2007cuspidality}, we give an unconditional proof of the non-essentially-self-dual case for $n=4$. We can then deduce parts of local-global compatibility at $\ell=p$ as a by-product (see Corollary \ref{p-adicHT}). This proof should also work over any number field, if one knows the existence of the associated Galois representations over all number fields, without necessarily knowing the local-global compatibility at $\ell=p$. 
Let us state our first main theorem (Corollary \ref{mainIrr}):
\smallskip\\
\textbf{Theorem A.}\label{TheoA} Let $K$ be totally real and $\pi$ be a regular algebraic cuspidal automorphic representation of $\GL_4(\AA_K)$ and assume that $\pi$ is not essentially self-dual. Then $\rho_{\pi,\fp}$ is irreducible for any finite place $\fp$ of $E$.
\medskip \\
Our second objective in this paper is to compute the $\overline{\QQ_p}$-monodromy group $\mathcal{G}$ and the  monodromy group $G$ of 4-dimensional automorphic Galois representations in both the (essentially) self-dual and non-self-dual cases. For the group $\GL_2$, this has been done by Ribet \cite{ribet1980twists}, Momose \cite{momose1981adic}, and Nekov{\'a}{\v{r}} \cite{nekovavr2012level}. For $\GL_3$, this has been worked out by the author \cite{shavali2025image}. Here, we will study the $\GL_4$ case. 

In fact, computing $\mathcal{G}/\overline{\QQ_p}$ (or rather its Lie algebra) based on the functorial behavior of $\pi$ is straightforward and can essentially be extracted from the literature as we will see in Section \ref{LieClassification}. To compute $G$, we will need to generalize the theory of extra-twists to a more general setting and apply it in different primitive cases (orthogonal, symplectic, non-self-dual) of the classification of Section \ref{LieClassification}. These extra-twists will provide the data that we need to descend from $\overline{\QQ_p}$ to $\QQ_p$. In the symplectic case, we can only prove an unconditional result when $K=\QQ$ and over all but finitely many primes. In the other two cases, we have no restrictions. 

Let us state our second main theorem (Corollary \ref{mainBigIm}) over $\QQ$ for simplicity. Let $\pi$ be a regular algebraic cuspidal automorphic representation of $\GL_4(\AA_\QQ)$ that is not self-twisted. Assume that $\pi$ is neither a symmetric cube nor an Asai transfer. Let $\rho_{\pi,p}=\prod_{\fp|p}\rho_{\pi,\fp}$ be the $p$-adic Galois representation attached to $\pi$ and $G/\QQ_p$ be the Zariski closure of the image of $\rho_{\pi,p}$.
\medskip \\ 
\textbf{Theorem B.} Let $F$ be the field fixed by the extra-twists of $\pi$. Then for all but finitely many $p$ we have $$G^{\mathrm{der}}=\mathrm{Res}^{F_p}_{\QQ_p}(H_p),$$ where $H_p/F_p$ is a form of one of the groups $\mathrm{SO_4}$, $\mathrm{Sp}_4$, or $\SL_4$, based on whether $\pi$ is orthogonal, symplectic, or non-self-dual. 
\medskip \\
Note that the image would be $p$-adically open in the group $H_p(F_p)$. Therefore, this theorem is in the spirit of the results of Ribet \cite{ribet1980twists} and Momose \cite{momose1981adic}. The group $H_p$ should be thought of as the higher dimensional local analogue of the quaternion algebra of Ribet and Momose in the case of modular forms.
If one believes that there is a motive associated with $\pi$, then the group $\mathrm{Res}^{F_p}_{\QQ_p} H_p$ should be the semisimple part of the Mumford-Tate group of that motive. Since the field $F$ is a global object, it is natural to expect the existence of a global $H/F$ whose base change gives all the $H_p$'s. In fact, combining Theorem B with a famous result of Larsen \cite{larsen1995maximality} one can show that $H_p$ is often quasi-split. We also use this result to prove residual big image results for a density one set of primes (see Theorem \ref{residualBigImage}). 
\medskip \\
\noindent\textbf{Methods.} Let us briefly explain the methods used in proving Theorems A and B. One shows Theorem A by contradiction. Let $\rho_{\pi,\fp} = \sigma \oplus \tau$ be reducible. If at least one of $\sigma$ or $\tau$ is one dimensional, then it has to be locally algebraic by the results of \cite{bockle2025weak}. Then one can use a standard argument using exterior square $L$-functions to get a contradiction. This is essentially the same proof as in \cite{calegari2013irreducibility}. The difficult case is when both $\sigma$ or $\tau$ are two dimensional irreducible representations. In this case, we first transfer our representations to $\GL_6$ using Langlands functoriality for $\wedge^2:\GL_4 \rightarrow \GL_6$ established by Kim \cite{kim2003functoriality}. A result of Asgari and Raghuram then helps us to determine when this transfer is itself cuspidal \cite{asgari2007cuspidality}. Establishing the cuspidality of this transfer, we then use another transfer, this time from $\GL_6$ to $\GL_{20}$ using $\wedge^3:\GL_6 \rightarrow \GL_{20}$ (the idea of using this map was already considered by Ramakrishnan \cite{ramakrishnan2013decomposition}). The problem is that Langlands functoriality for this map has not been established yet. Nevertheless, one knows the expected behavior of the exterior cube $L$-function by \cite{ginzburg2000exterior}. In particular, a question raised by Ginzburg and Rallis about determining when this $L$-function has a pole at $s=1$ is answered by Yamana in \cite{yamana2015poles} which will be crucial to our argument. Combining all of these results and some formal computations with $L$-functions, we are able to reduce to the case where $\wedge^2 \pi$ is of the form $\mathrm{Ind^{K'}_K}(\pi')$ for a totally real quadratic field extension $K'/K$ and cuspidal automorphic representation $\pi'$ of $\GL_3(\AA_{K'})$. It is then easy to check that $\pi'$ must be regular algebraic, use the existence of Galois representations attached to such automorphic representations, and use the irreducibility results of Böckle and Hui for $\GL_3$ \cite{bockle2025weak} to deduce the result. \\
When one established irreducibility, one can essentially determine the Lie algebra of the image over $\overline{\QQ_p}$ from the functoriality properties of $\pi$, since there are not so many cases for $\GL_4$. See the table at the end of Section \ref{LieClassification}. There are three cases that do not reduced to $\GL_1$ and $\GL_2$, corresponding to simple Lie algebras $\mathfrak{sp}_4$, $\mathfrak{so}_4$, and $\mathfrak{sl}_4$. In these cases, the image of the Galois representation is contained in $\mathrm{GSp}_4(\overline{\QQ_p})$, $\mathrm{GO}_4(\overline{\QQ_p})$, and $\mathrm{GL}_4(\overline{\QQ_p})$, respectively. All of these three reductive groups have the property that the center of the group is one dimensional. Under this condition, we are able to generalize Ribet's theory of inner-twists for modular forms, to the extra-twists for such reductive groups. We develop the theory on the Galois side since we want to use it to compute the image. These extra-twists should be thought of as symmetries that are closely related to the endomorphisms of the hypothetical motive that should be associated with the representation $\pi$. With the right theory of extra-twists, one can give explicit formulas for the field fixed by all of these symmetries (see Lemma \ref{cent}) generalizing those of Ribet. This is the essential technical heart of proving open image results. The rest of the proof is similar to the one given in \cite{shavali2025image} for the case of $\GL_3$, using the Lie algebra version of Goursat's lemma following Ribet.  
\medskip \\
\noindent\textbf{The structure of this paper.} In Sections \ref{SectionPreliminaries} and \ref{SectionLocally}, we review some background material and prove some lemmas that we are going to use often in the paper. In Section \ref{SectionIrreducibility}, we prove our main irreducibility result. Most of the arguments here are automorphic in nature. As a result of our irreducibility theorem, we can in fact deduce local-global compatibility at $\ell=p$ for a density one set of primes. Section \ref{LieClassification} is devoted to classifying automorphic representations of $\GL_4$ based on their functoriality behavior and computing the Lie algebra (over $\overline{\QQ_p}$) of the attached Galois representations. In Section \ref{SectionExtra} we develop the theory of extra-twists for any split reductive group that admits an irreducible faithful representation. We use them in Section \ref{SectionBig} to prove our main big image results. 
\medskip \\
\noindent\textbf{Acknowledgments.} This project benefited enormously from many conversations with Gebhard Böckle and Chun-Yin Hui, to whom I am deeply grateful. I would also like to thank Frank Calegari and Toby Gee for their helpful comments, Jack Thorne for a discussion about local–global compatibility, and Judith Ludwig and Dimitri Whitmore for their interest in this work. This work was supported by the Deutsche Forschungsgemeinschaft (DFG) through the Collaborative Research Centre TRR 326 Geometry and Arithmetic of Uniformized Structures, project number 444845124. 
\newpage
\noindent \textbf{Notations.} We will use the following notations and conventions throughout the paper. 
\begin{itemize}
    \item[--] For a number field $K$, $\Gamma_K$ denotes the absolute Galois group and $\AA_K$ denotes the ring of adeles over $K$. Also, we use $\Sigma_K$ for the set of finite places of $K$. 
    \item[--] We use the notation $\pi_1 \boxplus \pi_2$ for the isobaric sum and $\pi_1 \boxtimes \pi_2$ for the automorphic tensor product of $\pi_1$ and $\pi_2$. If $\pi_2=\chi$ is a Hecke character, we simply write $\pi_1 \otimes \chi$ for the automorphic tensor. 
    \item[--] Let $H$ be a subgroup of the abstract group $G$ and $\rho$ and $\mu$ be representations of $G$ and $H$, respectively. We write $\mathrm{BC}^G_H(\rho)$ or simply $\rho|_{H}$ for the restriction of $\rho$ to $H$, and $\mathrm{Ind}^G_H (\mu)$ for the induction of $\mu$ to $G$. If $G=\Gamma_K$ and $H=\Gamma_L$ for an extension $L/K$, then we also write $\mathrm{BC}^L_K$ and $\mathrm{Ind}^L_K$ for $\mathrm{BC}^{\Gamma_K}_{\Gamma_L}$ and $\mathrm{Ind}^{\Gamma_K}_{\Gamma_L}$. We use the same notations for the automorphic base change and automorphic induction.
    \item[--] For an automorphic representation $\pi$ of a split reductive group $G$ and a representation $r:G \rightarrow \GL_n$, we write $L(s,\pi,r)$ for the associated $L$-function. If $G=\GL_n$ and $r$ is the standard representation, we simply write $L(s,\pi)$. 
    We always normalize our $L$-functions to have critical strip $0<\mathrm{Re}(s)<1$.   
    \item[--] For an algebraic group $G$ over a field $F$ and a field extension $E/F$, $\mathrm{Aut}_E(G)$ denotes the groups of automorphisms of $G_E$.  $\mathrm{Inn}_E(G)$ denotes the subgroup of the inner automorphisms of $G$ and $\mathrm{Out}_E(G)\coloneqq\mathrm{Aut}_E(G)/\mathrm{Inn}_E(G)$. When $E=F$ we drop it from the notation.
    \item[--] By the density of a subset $\mathcal{P}$ of the set of rational prime numbers $\mathrm{Primes}$, we always mean the Dirichlet density, that is 
    \[
    \lim_{s \rightarrow 1^+} \frac{\sum_{p \in \mathcal{P}} p^{-s}}{\sum_{p \in \mathrm{Primes}} p^{-s}}.
    \]
    \item[--] We have different ways of writing down a compatible family of Galois representations. Primarily, an $E$-rational compatible family $\{ \rho_\fp\}_{\fp \in \Sigma_E}$ is given by one representation 
    \[
    \rho_\fp:\Gamma_K \rightarrow \GL_n(\overline{E_\fp})
    \]
    for each finite place $\fp$ of $E$, such that the characteristic polynomials at almost all Frobenius elements of $\Gamma_K$ are compatible in the usual way. We often like to gather all finite places $\fp$ above a rational prime $p$ together and form the family $\{\rho_p\}_{p \in \Sigma_\QQ}$ such that
    \[
    \prod_{\fp|p} \rho_\fp= \rho_p:\Gamma_K \rightarrow \GL_n\left(\prod_{\fp|p} \overline{E_\fp}\right).
    \]
    Since each embedding $\lambda : E \hookrightarrow \overline{\QQ_p}$ induces an absolute value on $E$ (which corresponds to some finite place above $p$), we can also form a family $\{\rho_\lambda\}_\lambda$ of the form
    \[
    \rho_\lambda: \Gamma_K \rightarrow \GL_n(\overline{\QQ_p})
    \]
    for all primes $p$ and embeddings $\lambda: E \hookrightarrow \overline{\QQ_p}$. 
\end{itemize}
\section{Preliminaries}\label{SectionPreliminaries}

\subsection{Some Automorphic Background}

In this section, we review some of the basic definitions and results that we are going to use throughout the paper. Let $K$ and $E$ be two number fields. 

We usually work with regular algebraic cuspidal (or RAC for short) automorphic representations of $\GL_n(\AA_K)$. A cuspidal automorphic representation of $\GL_n(\AA_K)$ is regular algebraic if its archimedean component $\pi_\infty$ is infinitesimally equivalent to an irreducible algebraic representation of $\mathrm{Res}^K_\QQ \GL_n$. Therefore, there exists a unique set of weights 
$$a=(a_{\tau,i})_{\tau,i} \in (\ZZ^n)^{\mathrm{Hom}(K,\CC)}$$
satisfying 
\[
a_{\tau,1} \geq \cdots \geq a_{\tau,n},
\]
such that at each embedding $\tau$, this algebraic representation is equivalent to the dual of the irreducible representation of $\GL_n$ with highest weight $a_\tau = (a_{\tau,i})_i$. 

Let us recall different cases of Langlands functoriality that we are going to use later. The first result is the functoriality of 
\[
\boxtimes : \GL_2 \times \GL_2 \rightarrow \GL_4,
\]
which is essentially due to Ramakrishnan \cite{ramakrishnan2000modularity}.

\begin{theo}
    Let $\pi_1$ and $\pi_2$ be isobaric automorphic representations of $\GL_2(\AA_K)$. Then the admissible representation $\pi_1 \boxtimes\pi_2$, defined locally using the local Langlands correspondence, is an isobaric automorphic representation. 
\end{theo}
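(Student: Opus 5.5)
The plan is to reduce to the case where both $\pi_1$ and $\pi_2$ are cuspidal, and then appeal to Ramakrishnan's construction of the Rankin--Selberg product. Write an isobaric $\pi_i$ either as cuspidal or as $\chi_i\boxplus\chi_i'$ with Hecke characters $\chi_i,\chi_i'$. Local Langlands is compatible with direct sums and tensor products of Weil--Deligne representations, so $\boxtimes$ distributes over $\boxplus$ place by place. Hence:
\begin{itemize}
\item If $\pi_1=\chi_1\boxplus\chi_1'$, then $\pi_1\boxtimes\pi_2$ is locally isomorphic to $(\pi_2\otimes\chi_1)\boxplus(\pi_2\otimes\chi_1')$.
\item If, in addition, $\pi_2$ is non-cuspidal, we get an isobaric sum of four Hecke characters.
\end{itemize}
Twists of isobaric automorphic representations by Hecke characters are automorphic, and isobaric sums of automorphic representations are automorphic (Langlands, Jacquet--Shalika). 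These cases are therefore immediate.

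Now assume both $\pi_1$ and $\pi_2$ are cuspidal. Here I would follow Ramakrishnan and use the converse theorem of Cogdell--Piatetski-Shapiro for $\GL_4$ with twists by cuspidal representations of $\GL_1$ and $\GL_2$. The steps are:
\begin{enumerate}
\item Define $\Pi=\pi_1\boxtimes\pi_2$ locally, so that its local $L$- and $\varepsilon$-factors agree with those of the Weil--Deligne tensor product.
\item For cuspidal $\tau$ on $\GL_m(\AA_K)$ with $m\le 2$, identify $L(s,\Pi\times\tau)$ with the triple-product-type $L$-function $L(s,\pi_1\times\pi_2\times\tau)$ attached to $\GL_2\times\GL_2\times\GL_m$.
\item For $m=1$ this is a Rankin--Selberg $L$-function $L(s,\pi_1\times(\pi_2\otimes\tau))$.
\item For $m=2$, realize $L(s,\pi_1\times\pi_2\times\tau)$ through the Langlands--Shahidi method. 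The relevant Levi is in $D_4$ or in $\mathrm{GSpin}$, depending on the setup.
\item Using that theory, establish the needed properties: meromorphic continuation, the functional equation, boundedness in vertical strips, and entireness for suitable twists.
\end{enumerate}

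The main obstacle is entireness in step 5 together with matching local factors at ramified places. To handle the ramified places, I would twist by a highly ramified character at a finite set $S$ of places, using the "converse theorem with a finite set $S$" version. Local factors at $S$ stabilize under highly ramified twists, which gives a weak automorphic lift $\Pi'$ agreeing with $\Pi$ outside $S$.

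Finally I would upgrade from weak to strong lifting. Strong multiplicity one for isobaric representations shows that $\Pi'$ is uniquely determined. Then I would compare local factors at $S$ via stability, or via base change to a solvable extension where the relevant places become unramified. This identifies $\Pi'$ with $\Pi$ everywhere, so $\Pi$ is isobaric automorphic.

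Alternatively, since the paper attributes the statement to Ramakrishnan, one can simply cite \cite{ramakrishnan2000modularity} for the cuspidal case and supply only the formal reduction in the first paragraph.
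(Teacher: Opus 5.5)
Your proposal is correct and takes the same route as the paper, which gives no argument beyond citing Ramakrishnan \cite{ramakrishnan2000modularity} for the cuspidal case. Your reduction of the non-cuspidal isobaric cases to twists and isobaric sums, via distributivity of $\boxtimes$ over $\boxplus$ on $L$-parameters, is exactly what the paper's word ``essentially'' leaves implicit.

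The self-contained sketch is optional, but its final step fails as phrased: stability of $\gamma$-factors under highly ramified twists cannot distinguish $\Pi'_v$ from $\Pi_v$ at $v\in S$. The standard fix is to apply the converse theorem with two disjoint auxiliary sets of unramified places and compare the two resulting lifts by strong multiplicity one. This in turn requires the local matching of triple-product factors at the ramified places of $\pi_1,\pi_2$.
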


We will also use the functoriality of 
$$\wedge^2: \GL_4 \rightarrow \GL_6$$
due to Kim \cite{kim2003functoriality}. Let $\pi = \otimes'\pi_v$ be an automorphic representation of $\GL_4(\AA_K)$. Using the local Langlands correspondence, one can define an irreducible admissible representation $\wedge^2 \pi_v$ for each $v$. Then we have:
\begin{theo}\label{Kim}
    Let $\pi = \otimes'\pi_v$ be a cuspidal automorphic representation $\GL_4(\AA_K)$. Then there exists an automorphic representation $\wedge^2 \pi$ of $\GL_6(\AA_K)$ such that for any place $v$ of $K$ not above 2 and 3 one has $(\wedge^2 \pi)_v = \wedge^2(\pi_v)$.  
\end{theo}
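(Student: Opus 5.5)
This is Kim's theorem on the exterior square transfer, and I would prove it by his route: the converse theorem of Cogdell and Piatetski-Shapiro, fed by the Langlands--Shahidi method. Throughout, $\Pi=\otimes'\Pi_v$ denotes the candidate representation of $\GL_6(\AA_K)$, with $\Pi_v=\wedge^2(\pi_v)$ at every place. Here $\wedge^2(\pi_v)$ is defined through the local Langlands correspondence, which is available at archimedean places and, by Harris--Taylor and Henniart, at all finite places.

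The first step is the analytic theory. I would realize $L(s,\pi,\wedge^2)$, and its twists by cuspidal $\tau$ on $\GL_m$, as Langlands--Shahidi $L$-functions. For $L(s,\pi,\wedge^2)$ itself one takes the Levi $\GL_4\times\GL_1$ (or $\GL_4\times\GL_2$) in $\mathrm{Spin}_{10}$. For the twists $L(s,\Pi\times\tau)$ with $m\le 4$ one takes the Levi $\GL_4\times\GL_m$ in $\mathrm{Spin}_{2m+8}$. The goal of this step is to establish the properties needed for the twisted family:

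\begin{enumerate}
\item meromorphic continuation and the functional equation, obtained from the theory of local coefficients;
\item boundedness in vertical strips, from the Gelbart--Shahidi argument;
\item entireness.
\end{enumerate}

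Entireness is the hard part. Kim proves it by showing that the normalized Eisenstein series has no poles in the relevant region except those forced by the standard module conjecture, using the analysis of the normalized intertwining operators.

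Next comes the local matching. At each place one must check that the local factors $L(s,\Pi_v\times\tau_v)$ and $\varepsilon(s,\Pi_v\times\tau_v,\psi_v)$ agree with the Langlands--Shahidi factors. At unramified places this is the Gindikin--Karpelevich computation. At archimedean places it follows from Shahidi's result that Langlands--Shahidi factors are Artin factors. At the finitely many remaining ramified places I would avoid a full comparison by using the stability of $\gamma$-factors under highly ramified twists. Concretely, I apply the converse theorem with twists by cuspidal $\tau$ that are ramified enough at a finite set $S$ of places, chosen to include all places above $2$ and $3$. For such $\tau$ both sides have trivial $L$-factors at $S$ and equal $\gamma$-factors.

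The Cogdell--Piatetski-Shapiro converse theorem then produces an automorphic $\Pi'$ with $\Pi'_v\simeq\Pi_v$ for all $v\notin S$. Varying the set $S$ only requires it to contain a place, and the argument reduces the exceptional places to those above $2$ and $3$. These are exactly the places where, in Kim's original work, the local equality of Langlands--Shahidi factors with the correct Galois factors for supercuspidals was not yet known. This gives the statement.

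I expect the main obstacle to be the entireness in the third property, that is, controlling the poles of the twisted Langlands--Shahidi $L$-functions for every $m\le 4$ with $\tau$ ramified at $S$. The key tool there is Kim's criterion via the non-vanishing of normalized intertwining operators together with the standard module conjecture for the relevant $\mathrm{Spin}$ groups.
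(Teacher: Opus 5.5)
The paper gives no proof of this statement; it quotes Kim \cite{kim2003functoriality}. Your outline follows Kim's route: Langlands--Shahidi theory for $\wedge^2\otimes\rho_m$ with $m\le 4$, fed into the Cogdell--Piatetski-Shapiro converse theorem with twisting family $\mathcal{T}^S(4)\otimes\eta$ and stability of $\gamma$-factors on $S$. The gap is in the last step. The converse theorem only gives $\Pi'_v\simeq\wedge^2(\pi_v)$ for $v\notin S$. At a finite place $v\notin S$ the twists $\tau_v$ range over arbitrary, possibly ramified, generic representations of $\GL_m(K_v)$. A place where $\pi_v$ is ramified can therefore be left out of $S$ only if you already know that the $L$- and $\varepsilon$-factors of $\wedge^2(\pi_v)\times\tau_v$ agree with the Langlands--Shahidi factors for $\pi_v\times\tau_v$ and $\wedge^2\otimes\rho_m$, for all such $\tau_v$. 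That is exactly the comparison you said you would avoid by using stability. So in your argument $S$ must contain every ramified finite place of $\pi$. What you actually prove is a weak lift, correct only at archimedean places and at places where $\pi$ is unramified. ``Varying $S$'' does not help, since every admissible $S$ still contains all ramified places, so no ramified $v\nmid 6$ is ever reached.

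What is missing is genuine local input at ramified $v\nmid 2,3$: you must identify $\Pi'_v$ with $\wedge^2(\pi_v)$ for such $v\in S$. In substance this means proving that the Langlands--Shahidi $\gamma$-factors for $\wedge^2\otimes\rho_m$ coincide with the Artin factors of $\wedge^2(\phi_{\pi_v})\otimes\phi_{\tau_v}$, and then identifying $\Pi'_v$ from these factors. This is where the restriction to residue characteristic $\neq 2,3$ comes from. In residue characteristic at least $5$, every irreducible representation of $W_{K_v}$ of dimension at most $4$ is induced from a character, which is what makes a global comparison feasible. Henniart later proved the equality in all residue characteristics, which removes the exception at $2$ and $3$ altogether. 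Your closing sentence cites this as history, but the proof never actually supplies or uses such an input.

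There are also some smaller slips.
\begin{itemize}
\item The twisted $L$-functions come from the maximal Levi with derived group $\SL_m\times\SL_4$ in $\mathrm{Spin}_{2m+6}$, that is $\mathrm{Spin}_8,\dots,\mathrm{Spin}_{14}$, not $\mathrm{Spin}_{2m+8}$.
\item $L(s,\pi,\wedge^2)$ itself comes from the Siegel Levi of $\mathrm{SO}_8$, not from a Levi of $\mathrm{Spin}_{10}$.
\item At places where $\pi_v$ is unramified but $\tau_v$ is ramified, you need multiplicativity of the local coefficients, not just Gindikin--Karpelevich.
\item In the entireness step, the highly ramified twist is what forces $w_0(\sigma)\not\simeq\sigma$. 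The standard module conjecture enters through holomorphy and non-vanishing of the normalized intertwining operators.
\end{itemize}
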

The lack of local-global compatibility at $2$ and $3$ is harmless for our global arguments. So we will ignore this issue later on. It is natural to ask if one can predict when $\wedge^2 \pi$ would be cuspidal. This has been worked out by Asgari and Raghuram \cite[Theorem 1.1]{asgari2007cuspidality}. We will use their results extensively, so let us recall their main theorem here. 

\begin{theo}\label{Asgari}
    Let $\pi$ be a cuspidal automorphic representation of $\GL_4(\AA_K)$. Then the following are equivalent:
    \begin{enumerate}
        \item[(i)] $\wedge^2 \pi$ is not cuspidal.
        \item[(ii)] $\pi$ is one of the following: 
        \begin{enumerate}
            \item[(a)] $\pi = \pi_1 \boxtimes \pi_2$ for cuspidal automorphic representations $\pi_1$ and $\pi_2$ of $\GL_2(\AA_K)$. (This may be viewed as a transfer from the split $\mathrm{GSpin_4}$ to $\GL_4$.)
            \item[(b)] $\pi = \mathrm{As}(f)$, the Asai transfer of a dihedral cuspidal automorphic representation of $\GL_2(\AA_{K'})$ for a quadratic extension $K'/K$. (This may be viewed as a transfer from the quasi-split non-split $\mathrm{GSpin}^*_4$ to $\GL_4$.)
            \item[(c)] $\pi$ is the functorial transfer of a globally generic cuspidal automorphic representation of $\mathrm{GSp}_4$ to $\GL_4$.
            \item[(d)] $\pi = \mathrm{Ind}^{K'}_K (f)$ for a cuspidal automorphic representation $f$ of $\GL_2(\AA_{K'})$ of a quadratic extension $K'/K$.
        \end{enumerate}
        \item[(iii)] $\pi$ satisfies one of the following:
        \begin{enumerate}
            \item[($\alpha$)] $\pi = \pi^\vee \otimes \chi$ for some Hecke character $\chi$ of $K$, and $\pi$ is not the Asai transfer of a non-dihedral cuspidal automorphic representation.  
            \item[($\beta$)] $\pi = \pi \otimes \chi$ for some Hecke character $\chi$ of $K$.
        \end{enumerate}
    \end{enumerate}
\end{theo}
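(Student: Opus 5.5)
This theorem is quoted from Asgari--Raghuram \cite[Theorem 1.1]{asgari2007cuspidality}. In the paper it is used as a black box, but here is how I would reconstruct the proof. The governing principle is that $\wedge^2\pi$ is cuspidal unless its Rankin--Selberg $L$-functions detect a nontrivial isobaric decomposition. The test I would use is the order of pole at $s=1$ of
\[
L(s,\wedge^2\pi\times(\wedge^2\pi)^\vee)=L(s,\wedge^2\pi\times\wedge^2\pi^\vee).
\]
Locally, as representations of $\GL_4$ (via the Langlands parameter), we have
\[
\wedge^2 V\otimes\wedge^2 V^\vee \cong \mathbf 1\oplus \mathrm{Ad}(V)\oplus W,
\]
where $W$ is the remaining $20$-dimensional irreducible summand. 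Kim's proof of Theorem \ref{Kim}, via the Langlands--Shahidi method, gives the analytic control needed. This includes holomorphy and nonvanishing of the relevant $L$-functions on $\mathrm{Re}(s)\ge1$ and, by the classification of isobaric representations (Jacquet--Shalika), the equivalence: $\wedge^2\pi$ is cuspidal iff that pole is simple.

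For (i)$\Rightarrow$(iii), I would assume $\wedge^2\pi=\boxplus_i\Pi_i$ has at least two constituents and split by the possible dimension patterns $(5,1)$, $(4,2)$, $(3,3)$, $(4,1,1)$, $(2,2,2)$, and so on.
\begin{itemize}
\item A $\GL_1$ constituent $\chi$ gives a pole of $L(s,\pi,\wedge^2\otimes\chi^{-1})$. By Jacquet--Shalika/Shahidi this forces $\pi\cong\pi^\vee\otimes\chi$ with symplectic-type parameter, which is ($\alpha$).
\item Otherwise I would compute $L(s,\pi\times\pi^\vee\otimes\omega)$ or $L(s,\wedge^2\pi\times\Pi_i^\vee)$ using the identity
\[
\wedge^2\pi\boxtimes\wedge^2\pi \sim \pi\boxtimes(\wedge^3\pi)\boxplus\cdots,\qquad \wedge^3\pi=\pi^\vee\otimes\omega_\pi,
\]
to produce a character $\chi$ with either $\pi\cong\pi\otimes\chi$, which is ($\beta$), or $\pi\cong\pi^\vee\otimes\chi$.
\end{itemize}

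For (iii)$\Rightarrow$(ii) I would use known descent results.
\begin{itemize}
\item In case ($\beta$) with $\chi\neq1$, $\chi$ is quadratic because $\omega_\pi\chi^4=\omega_\pi$ and self-twists form a group. Arthur--Clozel then gives $\pi=\mathrm{Ind}^{K'}_K f$, which is (d).
\item In case ($\alpha$), the Ginzburg--Rallis--Soudry descent to $\mathrm{GSp}_4$ or $\mathrm{GSO}_4$ applies. The parity of the pole (exterior versus symmetric square twisted by $\chi$) decides between (c) and (a)/(b), according to whether the $\mathrm{GSO}_4$ form is split or quasi-split. The proviso excluding the non-dihedral Asai case is there because for non-dihedral $f$, $\mathrm{As}(f)$ has cuspidal exterior square even though it is self-dual up to twist.
\end{itemize}

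For (ii)$\Rightarrow$(i), I would compute directly.
\begin{itemize}
\item Case (a): $\wedge^2(\pi_1\boxtimes\pi_2)=(\mathrm{Ad}\,\pi_1\otimes\omega_2)\boxplus(\mathrm{Ad}\,\pi_2\otimes\omega_1)$, which is not cuspidal.
\item Case (c): $\wedge^2$ contains the similitude character.
\item Case (d): $\wedge^2\mathrm{Ind}f=\mathrm{Ind}(\omega_f)$-type pieces $\boxplus\,\mathrm{As}(f)$, which is again non-cuspidal.
\item Case (b): a dihedral $f$ produces an Asai transfer whose $\wedge^2$ visibly splits off an induced character.
\end{itemize}

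I expect the main obstacle to be (i)$\Rightarrow$(iii) in the $(3,3)$ and $(2,2,2)$ patterns. No $\GL_1$ constituent appears there, and the pole argument must be pushed through $\mathrm{Ad}$ and $W$ with careful bookkeeping of central characters. I would first try to reduce these to the $\GL_1$ case by base-changing along the quadratic or cubic field cut out by the $\Pi_i$.
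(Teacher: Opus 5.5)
The paper gives no proof of this statement; it imports \cite[Theorem 1.1]{asgari2007cuspidality}. So your sketch has to stand on its own, and it has a genuine gap in (i)$\Rightarrow$(iii): you only handle the case where $\wedge^2\pi$ has a $\GL_1$ constituent. For the patterns $(4,2)$, $(3,3)$ and $(2,2,2)$, your second bullet lists $L$-functions to ``compute'' but gives no mechanism that outputs a character $\chi$. Your pole test cannot supply one either. The equivalence ``$\wedge^2\pi$ is cuspidal iff $L(s,\wedge^2\pi\times\wedge^2\pi^\vee)$ has a simple pole'' is just Jacquet--Shalika for isobaric sums. The $L$-function of the $20$-dimensional summand $W$ has no analytic theory of its own (it is only defined as a quotient), so pushing the pole through $W$ is circular. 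Your fallback, base change along a field ``cut out by the $\Pi_i$'', presupposes that the constituents are induced. You have not shown this, and it fails in the most important case.

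What is missing is the orthogonal structure of $\wedge^2\pi$. Since $\wedge^2\pi\cong(\wedge^2\pi)^\vee\otimes\omega_\pi$, the map $\Pi\mapsto\Pi^\vee\otimes\omega_\pi$ permutes the constituents. You can exploit this without touching $W$ by using the \emph{alternating} half of your $36$-dimensional decomposition. Indeed $\wedge^2(\wedge^2V)\otimes\det V^{-1}\cong\mathrm{Ad}(V)$, so $L^S(s,\wedge^2\pi,\wedge^2\otimes\omega_\pi^{-1})=L^S(s,\pi,\mathrm{Ad})$ is holomorphic and nonzero at $s=1$. Expand the left side over $\wedge^2\pi=\boxplus_i\Pi_i$ into the factors $L^S(s,\Pi_i,\wedge^2\otimes\omega_\pi^{-1})$ and $L^S(s,\Pi_i\times\Pi_j\otimes\omega_\pi^{-1})$ for $i<j$, none of which vanishes at $s=1$. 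Together with the permutation, this shows the $\Pi_i$ are pairwise distinct and each is $\omega_\pi$-orthogonal.

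Only then does base change help, and only for $\GL_2$ constituents. These are forced to be dihedral, base change to their quadratic field creates $\GL_1$ constituents, and the resulting self-twist or self-duality descends. In the $(3,3)$ pattern the constituents are twisted adjoint lifts $\mathrm{Ad}(\tau_i)\otimes\mu_i$ of non-dihedral $\tau_i$. This is exactly the case $\pi=\pi_1\boxtimes\pi_2$ that the paper uses in Lemma~\ref{isRegular}. In general (e.g.\ for regular algebraic $\tau_i$, cf.\ Lemma~\ref{GL(2)}) no finite base change makes these constituents non-cuspidal. There you need a genuinely further input: a descent or rigidity statement identifying $\pi$, up to twist and contragredient, with $\tau_1\boxtimes\tau_2$. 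Nothing in your sketch provides it.

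Smaller slips:
\begin{itemize}
\item $\wedge^2(\pi_1\boxtimes\pi_2)=(\mathrm{Sym}^2\pi_1\otimes\omega_2)\boxplus(\mathrm{Sym}^2\pi_2\otimes\omega_1)$, so each $\mathrm{Ad}\,\pi_i$ is twisted by $\omega_1\omega_2$, not by a single $\omega_j$.
\item A self-twist $\chi$ may have order $4$, in which case you should use $\chi^2$.
\item The twisted case $\pi\cong\pi^\vee\otimes\chi$ needs descent for similitude ($\mathrm{GSpin}$) groups, not just Ginzburg--Rallis--Soudry.
\end{itemize}
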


\subsection{Preliminary Lemmas}
In this section, we will prove a few preliminary lemmas that are going to be used throughout the paper. We start with the types of reductive groups that we will use as the target of our Galois representations.
\begin{lemma}\label{center}
    Let $\widehat{G}$ be a reductive group over an algebraically closed field of characteristic 0. Then $\hG$ admits an irreducible faithful representation if and only if the center of $\hG$ embeds into $\GG_m$.
\end{lemma}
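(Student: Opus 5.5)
The plan is to prove the two directions separately, using Schur's lemma for the forward direction and highest weight theory for the converse. Throughout, $\hZ$ denotes the center of $\hG$.

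\emph{Forward direction.} Suppose $V$ is an irreducible faithful representation of $\hG$. Since $\hZ$ commutes with $\hG$, Schur's lemma shows that $\hZ$ acts on $V$ by scalars. This gives a character $\chi:\hZ\to\GG_m$. Its kernel acts trivially on $V$, so faithfulness forces $\chi$ to be injective. In characteristic $0$ an injective homomorphism of algebraic groups is a closed immersion, so $\hZ$ embeds into $\GG_m$.

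\emph{Converse, connected case.} First assume $\hG$ is connected. Let $\chi:\hZ\hookrightarrow\GG_m$ be the given embedding, and fix a maximal torus and Borel subgroup $\hZ\subset T\subset B$.
\begin{enumerate}
\item Since $\hZ\subset T$ is a closed diagonalizable subgroup, restriction $X^*(T)\to X^*(\hZ)$ is surjective. Choose $\mu\in X^*(T)$ restricting to $\chi$.
\item The root lattice restricts trivially to $\hZ$, and $2\rho$ lies in it. Set $\lambda=\mu+2n\rho$ with $n\gg0$. Then $\lambda$ still restricts to $\chi$, and it is dominant and regular, i.e.\ $\langle\lambda,\alpha^\vee\rangle>0$ for every simple coroot.
\item Let $V=V(\lambda)$ be the irreducible representation of highest weight $\lambda$. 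Its central character is $\chi$, so its kernel $N$ (a normal subgroup) satisfies $N\cap\hZ=1$.
\item The identity component $N^\circ$ is connected, normal and reductive. Its central torus is central in $\hG$, hence trivial. Any simple factor of $N^\circ$ is a simple factor of $\hG^{\mathrm{der}}$, and acting trivially on $V(\lambda)$ would force $\lambda$ to pair to zero with that factor's coroots, contradicting regularity. So $N$ is finite.
\item A finite normal subgroup of a connected group is central, since the connected group acts trivially by conjugation on a finite set. Hence $N\subset\hZ$, so $N=1$ and $V$ is faithful.
\end{enumerate}

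\emph{Converse, non-connected case.} This is the main obstacle, relevant if the paper allows groups such as $\mathrm{GO}_4$. The plan is to take an irreducible $\hG$-constituent $W$ of $\mathrm{Ind}_{\hG^\circ}^{\hG}V$ whose restriction to $\hG^\circ$ contains $V$. The kernel $N$ of $W$ then meets $\hG^\circ$ in a subgroup acting trivially on $V$, which is trivial by the connected case.

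Consequently $N$ is finite, and $[N,\hG^\circ]\subset N\cap\hG^\circ=1$, so $N$ centralizes $\hG^\circ$. The remaining difficulty is to rule out such elements using that $Z(\hG)$ embeds in $\GG_m$. I expect this to need a careful choice of $\lambda$ compatible with the component group, for instance making $\lambda$ generic among its $\pi_0(\hG)$-conjugates. The center of $\hG$ may also differ from that of $\hG^\circ$, so this point must be tracked separately.

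If the paper's convention is that reductive means connected, this step is unnecessary and the argument ends after the connected case.
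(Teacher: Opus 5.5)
Your forward direction is the same Schur's-lemma argument the paper gives. For the converse the paper gives no argument and only cites \cite{McN}, so your highest-weight construction is a genuinely different, self-contained route, and it is correct for connected $\hG$. The steps are:
\begin{enumerate}
\item Lift the embedding $\hZ\hookrightarrow\GG_m$ to some $\mu\in X^*(T)$.
\item Shift by $2n\rho$, which lies in the root lattice and so does not change the restriction to $\hZ$, to reach a regular dominant $\lambda$.
\item For $N=\ker V(\lambda)$, get $N\cap\hZ=1$ from the central character.
\item The radical of $N^\circ$ is a normal torus, hence central by rigidity, hence inside $N\cap\hZ=1$.
\item Each simple factor of $N^\circ$ is a simple factor of $\hG^{\mathrm{der}}$. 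Its coroots would pair to zero with $\lambda$, contradicting regularity, so $N$ is finite.
\item Being finite and normal in a connected group, $N$ is central, hence trivial.
\end{enumerate}
What your route buys over the citation is an explicit faithful irreducible representation, with control over its highest weight. Connectedness is the paper's convention: Section 5 takes $\hG$ to be a (connected) reductive group and uses that connectedness in Remark \ref{Zariski_dense}. The groups actually used are $\GL_4$, $\mathrm{GSp}_4$ and $\mathrm{GSO}_4$.

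You should delete the non-connected paragraph rather than leave it as a plan. For disconnected groups the converse is false, so no clever choice of $\lambda$ (for instance generic among its $\pi_0(\hG)$-conjugates) can complete it; only the forward direction survives.

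Here is a counterexample. Take the finite group $\hG=(\ZZ/3\ZZ)^2\rtimes\ZZ/2\ZZ$, with the nontrivial element acting by $-1$. Its center is trivial, so it embeds into $\GG_m$. Its irreducible representations are:
\begin{itemize}
\item two characters trivial on $(\ZZ/3\ZZ)^2$;
\item four $2$-dimensional representations $\mathrm{Ind}\,\psi$ for $\psi\neq 1$, whose kernels contain $\ker\psi\cong\ZZ/3\ZZ$.
\end{itemize}
So none is faithful. Taking the product with $\GG_m$ gives a counterexample with positive-dimensional identity component and center $\GG_m$. For finite groups the correct criterion is Gasch\"utz's theorem, not cyclicity of the center.

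So the lemma must be read with $\hG$ connected, and your connected-case argument is then the whole proof.
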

\begin{proof}
    One direction is easy. Assume that 
    \[
    \hG \hookrightarrow \GL_n
    \]
    is a faithful irreducible representation. Then Schur's lemma implies that the center $\hZ$ of $\hG$ consists only of scalar matrices, which gives an embedding $\hZ \hookrightarrow \GG_m$. For the other direction see \cite{McN}.
\end{proof}

Now assume that $p$ is a prime number, $K$ and $E$ are number fields, and $\hG/\QQ$ is a split reductive group admitting an irreducible faithful representation. Let 
\[
E_p \coloneqq E \otimes_\QQ \QQ_p = \prod_{\fp|p} E_\fp. 
\]
We will consider Galois representations of the form
\[
\prod _{\fp|p}\rho_\fp=\rho: \Gamma_K \rightarrow \hG(E_p) = \prod _{\fp|p} \hG(E_\fp),
\]
where $\Gamma_K = \mathrm{Gal}(\overline{K}/K)$. We can enlarge $E$ to make sure that the irreducible faithful representation of $\hG$ is defined over $E$. Therefore, we can also assume that we have an (irreducible) embedding
\[
\widehat{G}(E_p) \hookrightarrow \GL_n(E_p)
\]
from now on. Two Galois representations $\rho_1$ and $\rho_2$ as above are said to be isomorphic or conjugate if there exists a matrix $A \in \GL_n(E_p)$ such that 
$$\rho_1 = A\cdot \rho_2 \cdot A^{-1}.$$ 
Finally, we say that a Galois representation $\rho$ is absolutely irreducible, if for each prime ideal $\fp$ of $E$ above $p$, the representation $\rho_\fp$ is absolutely irreducible. 
\begin{lemma}\label{charTwist}
    Let $L$ be a finite Galois extension of $K$ and $\rho_1,\rho_2:\Gamma_K \rightarrow \widehat{G}(E_p)$ absolutely irreducible $p$-adic Galois representations such that $\rho_1|_{\Gamma_L}\simeq \rho_2|_{\Gamma_L}$. Then there exists a ($\widehat{Z}$-valued) Galois character $\chi$ such that
    \[
    \rho_1 \simeq \rho_2 \otimes \chi.
    \]
\end{lemma}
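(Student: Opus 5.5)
The plan is to conjugate so that the two restrictions agree exactly, show that the ratio $\rho_1(g)\rho_2(g)^{-1}$ is a scalar, and then check that this scalar lies in $\hZ$ and is a character. I work one place $\fp\mid p$ at a time and then take the product over $\fp\mid p$, so I may assume $E_p$ is a field.

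\textbf{Step 1 (normalize).} Choose $A\in\GL_n(E_p)$ with $\rho_1|_{\Gamma_L}=A\rho_2|_{\Gamma_L}A^{-1}$. Replace $\rho_2$ by $A\rho_2A^{-1}$. This is harmless because the conclusion only asks for an isomorphism, so from now on $\rho_1(h)=\rho_2(h)$ for all $h\in\Gamma_L$. The one subtlety is that the conjugated $\rho_2$ may no longer take values in $\hG(E_p)$ itself. I will return to this in Step 4.

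\textbf{Step 2 (the ratio centralizes $\Gamma_L$).} For $g\in\Gamma_K$ put $c(g)=\rho_1(g)\rho_2(g)^{-1}$. Since $\Gamma_L$ is normal in $\Gamma_K$, for $h\in\Gamma_L$ we have $ghg^{-1}\in\Gamma_L$. Hence
\[
c(g)\,\rho_2(ghg^{-1})\,c(g)^{-1}=\rho_1(g)\rho_2(h)\rho_1(g)^{-1}=\rho_1(ghg^{-1}),
\]
and $\rho_1(ghg^{-1})=\rho_2(ghg^{-1})$. So $c(g)$ commutes with $\rho_2(\Gamma_L)$.

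\textbf{Step 3 (main obstacle: forcing $c(g)$ to be scalar).} If $\rho_2|_{\Gamma_L}$ is absolutely irreducible, Schur's lemma makes $c(g)$ a scalar. The lemma as stated only assumes absolute irreducibility over $\Gamma_K$, and this step is where that gap must be closed. What I would try is to pass to Zariski closures. Let $\mathcal G\subseteq\hG$ be the $\overline{E_\fp}$-Zariski closure of $\rho_2(\Gamma_K)$. Because $[\Gamma_K:\Gamma_L]$ is finite, the Zariski closure of $\rho_2(\Gamma_L)$ contains the identity component $\mathcal G^\circ$.

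\begin{itemize}
\item If the standard representation restricted to $\mathcal G^\circ$ is irreducible, Schur's lemma applies and $c(g)$ is scalar.
\item Otherwise the ambient representation is induced from a proper subgroup, so $\rho_2$ is induced. In that case I would enlarge the comparison. Consider the $\Gamma_K$-module $\mathrm{Hom}_{\Gamma_L}(\rho_2,\rho_1)$, on which $\Gamma_K$ acts through the finite group $\mathrm{Gal}(L/K)$ via $f\mapsto\rho_1(g)f\rho_2(g)^{-1}$. Using $\mathrm{Hom}_{\Gamma_K}(\rho_2,\rho_1\otimes\mu)=0$ for $\mu\neq 1$, I would try to extract a one-dimensional constituent $\chi^{-1}$ that gives a nonzero $\Gamma_K$-map $\rho_2\otimes\chi\to\rho_1$.
\end{itemize}

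I expect this to be the delicate point. For finite-image representations that are trivial on $\Gamma_L$, the second bullet can genuinely fail, so the intended scope must be the case where $\mathcal G^\circ$ acts irreducibly. That is exactly the setting in which the lemma is used, namely restricted to $\Gamma_L$ where the image is Lie-irreducible. I would make this reduction explicit in the write-up.

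\textbf{Step 4 (character with values in $\hZ$).} Once $c(g)=\chi(g)\cdot I$ is scalar, $\chi$ is a character. Indeed,
\[
\chi(gg')=\rho_1(g)\rho_1(g')\rho_2(g')^{-1}\rho_2(g)^{-1}=\rho_1(g)\,\chi(g')\,\rho_2(g)^{-1}=\chi(g)\chi(g'),
\]
and $\chi$ is continuous. By construction $\rho_1=\rho_2\otimes\chi$.

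To see that $\chi$ is $\hZ$-valued, return to the original, unconjugated $\rho_2$. Then $\chi(g)I=\rho_1(g)\,A\rho_2(g)^{-1}A^{-1}$. Since $\chi(g)I$ is a scalar, conjugating by $A$ leaves it unchanged, so
\[
\chi(g)I=A^{-1}\bigl(\chi(g)I\bigr)A=A^{-1}\rho_1(g)A\,\rho_2(g)^{-1}.
\]
The group generated by $\rho_1(\Gamma_K)$, $\rho_2(\Gamma_K)$ and the conjugate $A^{-1}\rho_1(\Gamma_K)A$ need not lie in $\hG$, so the cleanest route is different. Observe that $\chi(g)^n=\det\rho_1(g)/\det\rho_2(g)$. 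I would instead show directly that $\chi(g)I$ lies in $\hG$ by comparing the $\hG$-valued maps $g\mapsto\rho_1(g)$ and $g\mapsto\rho_2(g)\chi(g)$ up to $\GL_n$-conjugacy. Any scalar matrix lying in $\hG$ commutes with $\hG$, so it lies in $\hZ$, which embeds into the scalars $\GG_m$ by Lemma \ref{center}. If necessary, the final identification with $\hZ$ then only requires enlarging $E$, as the paper allows.
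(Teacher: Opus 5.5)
Your Steps 1 and 2, the first bullet of Step 3, and the multiplicativity computation in Step 4 are exactly the paper's proof. The only cosmetic difference is that the paper conjugates $\rho_1$ and uses $\rho_1(g)^{-1}\rho_2(g)$.

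Your diagnosis in Step 3 is correct. The paper's proof applies Schur's lemma ``since $\rho_2$ is absolutely irreducible when restricted to $\Gamma_L$'', which is not a hypothesis of the statement, and without it the lemma is false. For a counterexample, take $\hG=\GL_2$ and the two non-isomorphic $2$-dimensional irreducible representations of $\mathrm{Gal}(L/K)\cong D_5$:
\begin{itemize}
\item they agree on $\Gamma_L$, since both are trivial there;
\item any $\chi$ with $\rho_1\simeq\rho_2\otimes\chi$ must be trivial on $\Gamma_L$, hence is $1$ or the sign character;
\item twisting by the sign fixes each of them, so no such $\chi$ exists.
\end{itemize}
So drop the second bullet and simply add absolute irreducibility of $\rho_2|_{\Gamma_L}$ as a hypothesis. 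It holds in every application in the paper, because there the image is Lie-irreducible.

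The real gap is in Step 4: you never show that $\chi$ is $\hZ$-valued. ``Comparing up to $\GL_n$-conjugacy'' is not an argument, and enlarging $E$ cannot move a scalar into $\hZ$. The paper's claim that $\phi(g)\in\hG$ ``by its definition'' skips the same point, since after conjugating by $A\in\GL_n(E_p)$ the ratio need not lie in $\hG$. You can close it as follows.

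By Lemma~\ref{center}, $\iota(\hZ)$ is a closed subgroup of the scalar torus $\GG_m$, so one of two cases holds.
\begin{itemize}
\item $\hZ$ is all of $\GG_m$. Then every scalar-valued character is $\hZ$-valued and there is nothing to prove. This covers $\GL_4$, $\mathrm{GSp}_4$ and $\mathrm{GSO}_4$, the only groups used later.
\item $\hZ$ is finite, so $\hG=\hGd$ is semisimple. Here use Zariski density, as in Definition~\ref{valid}.
\end{itemize}
In the finite case, assume the identity component of the Zariski closure of $\rho_2(\Gamma_L)$ is $\hG$. Then $A\hG A^{-1}$ is the identity component of the Zariski closure of $\rho_1(\Gamma_L)$, which lies in $\hG$. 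By connectedness and equal dimension, $A\hG A^{-1}=\hG$. Hence $\chi(g)I=\rho_1(g)\bigl(A\rho_2(g)A^{-1}\bigr)^{-1}$ lies in $\hG$, and being scalar it is central, i.e.\ lies in $\hZ$.

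So state the lemma with this density hypothesis, or with $\hZ$ positive-dimensional. That is what actually holds when the lemma is invoked in Lemma~\ref{cent}.
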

\begin{proof}
    Conjugating $\rho_1$, we may assume that $\rho_1$ and $\rho_2$
    are in fact equal when restricted to $\Gamma_L$. Now define
    $$
    \phi (g) := \rho_{1}^{-1}(g)\rho_{2}(g).
    $$
    A priori $\phi$ is just a map
    $\phi:\Gamma_K \rightarrow \widehat{G}(E_p)$ which
    is trivial on $\Gamma_L$. We want to prove that it
    is actually a homomorphism with values
    in the center (hence a character) on all of $\Gamma_K$. 

    Let $g\in \Gamma_K$ and $h \in \Gamma_L$. 
    Note that
    $\rho_{1} (h) = \rho_{2} (h)$
    and 
     $\rho_{1} (ghg^{-1}) = \rho_{2} (ghg^{-1})$
     since $\Gamma_L$ is normal in $\Gamma_K$.
     Now the following computation shows that 
     $\phi (g)=\rho_{1}^{-1}(g)\rho_{2}(g)$
     commutes with $\rho_{2} (h)$:
     $$
     \rho_{1}^{-1}(g)\rho_{2}(g)\rho_{2} (h)
     = \rho_{1}^{-1}(g)\rho_{2}(gh) =
     \rho_{1}^{-1}(g)
     \rho_{2}(ghg^{-1})\rho_{2}(g)
     $$
     $$
     =\rho_{1}(g^{-1})
     \rho_{1}(ghg^{-1})\rho_{2}(g)
     =\rho_{1} (h)\rho_{1}^{-1}(g)\rho_{2}(g)
     = \rho_{2} (h)\rho_{1}^{-1}(g)\rho_{2}(g)
     $$
     Now, since $\rho_2$ is absolutely irreducible when restricted to $\Gamma_L$, Schur's lemma implies that $\phi(g)$ is scalar and it commutes with everything. By its definition, $\phi(g)$ has values in $\widehat{G}$, so it must land in the center of $\widehat{G}$. 
\end{proof}

One can also prove an automorphic analogue of this lemma. We will only use this for the group $\GL_4$ and $[L:K]=2$. 

\begin{lemma}
    Let $\pi_1$ and $\pi_2$ be cuspidal automorphic representations of $\GL_n(\AA_K)$ and let $L/K$ be a cyclic prime degree extension. Assume that 
    \[
    \mathrm{BC}_{L/K}(\pi_1) = \mathrm{BC}_{L/K}(\pi_2).
    \]
    Then, there exists a Hecke character $\chi$ such that $\pi_1 = \pi_2 \otimes \chi$.
\end{lemma}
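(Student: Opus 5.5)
We would prove this with the Arthur--Clozel theory of cyclic base change for $\GL_n$. Let $\sigma$ be a generator of $\mathrm{Gal}(L/K)$, let $\ell = [L:K]$ be its prime degree, and let $\omega_{L/K}$ be the Hecke character of $K$ attached to $L/K$ by class field theory, which has order $\ell$. Arthur--Clozel describe the fibres of $\mathrm{BC}_{L/K}$ on cuspidal representations, and we distinguish two cases according to whether $\Pi \coloneqq \mathrm{BC}_{L/K}(\pi_1)$ is cuspidal.

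\textbf{Case 1: $\Pi$ is cuspidal.} Here we invoke \cite[Ch.~3, Thm.~4.2]{ArthurClozel}: a $\sigma$-stable cuspidal $\Pi$ of $\GL_n(\AA_L)$ is the base change of exactly the cuspidal representations $\pi_1 \otimes \omega_{L/K}^i$ for $0\le i<\ell$, and of no others. Since $\mathrm{BC}_{L/K}(\pi_2) = \Pi$ as well, $\pi_2$ is one of these, so $\pi_2 \simeq \pi_1 \otimes \omega_{L/K}^i$ for some $i$. Hence $\pi_1 = \pi_2 \otimes \chi$ with $\chi = \omega_{L/K}^{-i}$. This case is therefore just a citation.

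\textbf{Case 2: $\Pi$ is not cuspidal.} By Arthur--Clozel, this happens exactly when $\pi_1 \simeq \pi_1 \otimes \omega_{L/K}$. In that case $\ell \mid n$ and $\pi_1 = \mathrm{Ind}^L_K(\tau_1)$ for a cuspidal $\tau_1$ of $\GL_{n/\ell}(\AA_L)$ whose $\sigma$-conjugates $\tau_1^{\sigma^j}$ are pairwise distinct. Then
\[
\Pi = \tau_1 \boxplus \tau_1^\sigma \boxplus \cdots \boxplus \tau_1^{\sigma^{\ell-1}}.
\]
Because $\pi_2$ has the same base change $\Pi$, it is also non-cuspidal after base change, so likewise $\pi_2 = \mathrm{Ind}^L_K(\tau_2)$. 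Strong multiplicity one for isobaric sums (Jacquet--Shalika) then gives $\tau_2 = \tau_1^{\sigma^j}$ for some $j$. Since automorphic induction only depends on the $\mathrm{Gal}(L/K)$-orbit, $\mathrm{Ind}^L_K(\tau_1^{\sigma^j}) = \mathrm{Ind}^L_K(\tau_1)$, and so $\pi_1 = \pi_2$. Thus we may take $\chi = 1$.

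The main obstacle is the input from Arthur--Clozel: the precise description of the fibres of base change, both in the cuspidal case and in the non-cuspidal (induced) case. We simply cite it and do not reprove it. For the case of interest to us, namely $n=4$ and $\ell=2$, we would cite these statements directly.

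An alternative check avoids the case split. The base change identity gives, at almost all places $v$ of $K$, equalities of Rankin--Selberg local factors, which yield the identity
\[
L^S(s,\pi_1\times\pi_2^\vee)\,L^S(s,\pi_1\times\pi_2^\vee\otimes\omega_{L/K})\cdots L^S(s,\pi_1\times\pi_2^\vee\otimes\omega_{L/K}^{\ell-1}) = L^S(s,\Pi\times\Pi^\vee).
\]
The right-hand side has a pole at $s=1$. Each factor on the left is holomorphic and nonvanishing at $s=1$ unless $\pi_1 \simeq \pi_2 \otimes \omega_{L/K}^{-i}$, so some factor must have a pole, which gives $\pi_1 \simeq \pi_2 \otimes \omega_{L/K}^{-i}$ for some $i$. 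The one point needing care here is the equality of local factors at places $v$ that are inert in $L$, which again rests on local base change. For this reason we would present the Arthur--Clozel argument as the proof.
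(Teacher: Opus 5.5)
Your argument is correct, but your main route differs from the paper's.

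\textbf{Your route.} You split according to whether $\mathrm{BC}_{L/K}(\pi_1)$ is cuspidal and rely on the full Arthur--Clozel description of the fibres of base change. In the cuspidal case you cite uniqueness up to $\omega_{L/K}$-twist, which is essentially the statement to be proved. In the non-cuspidal case you need two further inputs: that cuspidal $\pi$ with $\pi\simeq\pi\otimes\omega_{L/K}$ are automorphic inductions, and the shape of their base change.

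\textbf{The paper's route.} The paper makes no case distinction. It applies automorphic induction to $\mathrm{BC}_{L/K}(\pi_1)=\mathrm{BC}_{L/K}(\pi_2)$ and uses the projection formula (which it calls Frobenius reciprocity): $\mathrm{Ind}_{L/K}\mathrm{BC}_{L/K}(\pi)=\pi\boxtimes\mathrm{Ind}_{L/K}(\mathbf{1})=\boxplus_i\,\pi\otimes\chi_i$, where $\chi_i$ runs over the characters of $\mathrm{Gal}(L/K)$. It then matches cuspidal summands in $\boxplus_i\,\pi_1\otimes\chi_i=\boxplus_i\,\pi_2\otimes\chi_i$ using uniqueness of isobaric decompositions. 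This needs only the existence of base change and induction with the expected unramified local components, plus Jacquet--Shalika.

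It also gives your finer information for free. The twist $\chi$ is a power of $\omega_{L/K}$. If $\pi_1\simeq\pi_1\otimes\omega_{L/K}$, the left side is $\pi_1^{\boxplus \ell}$, which forces $\pi_2=\pi_1$.

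\textbf{Your alternative.} Your Rankin--Selberg ``alternative check'' is really the paper's argument in $L$-function form. Uniqueness of isobaric decomposition is proved by exactly this pole count at $s=1$. The point you flag at inert places is not an obstacle. Only almost all places matter, and at an unramified inert $v$ base change acts on Satake parameters by $A\mapsto A^{\ell}$, with $\omega_{L/K,v}(\varpi_v)=\zeta$ a primitive $\ell$-th root of unity. Hence $\prod_{i=0}^{\ell-1}\det(1-\zeta^{i}Aq_v^{-s})=\det(1-A^{\ell}q_v^{-\ell s})$, where $A$ is the Satake parameter of $\pi_{1,v}\times\pi_{2,v}^\vee$. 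At split places the identity is immediate. No deeper local base change enters, so this version is the more economical proof and the better one to present. Your case-split proof buys nothing extra and depends on substantially heavier citations.
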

\begin{proof}
    Note that for cyclic prime degree extensions, automorphic base change and automorphic induction are known by \cite{arthur1989simple}. Since $L/K$ is cyclic, all irreducible representations of $\mathrm{Gal}(L/K)$ are characters. Let $\chi_1,\cdots,\chi_m$ be all the Hecke characters of $K$ corresponding to these characters, i.e. they become trivial after base change to $L$. In other words, these characters factor through the image of the norm map:
    \[
    \chi_i : K^\times\backslash \AA_K^\times /\mathrm{Nr}_{L/K}(\AA_L^\times) \rightarrow \CC^\times.
    \]
    Now applying induction we have
    \[
    \mathrm{Ind}_{L/K} (\mathrm{BC}_{L/K}(\pi_1)) = \mathrm{Ind}_{L/K} (\mathrm{BC}_{L/K}(\pi_2)).
    \]
    Frobenius reciprocity implies 
    \[
    \pi_1 \boxtimes \mathrm{Ind}_{L/K}(\mathbf{1}) = \pi_2 \boxtimes \mathrm{Ind}_{L/K}(\mathbf{1}).
    \]
    Therefore we obtain the equality of the isobaric sums
    \[
    (\pi_1 \otimes \chi_1) \boxplus \cdots \boxplus (\pi_1 \otimes \chi_m) = (\pi_2 \otimes \chi_1) \boxplus \cdots \boxplus (\pi_2 \otimes \chi_m). 
    \]
    Since $\pi_1$ and $\pi_2$ are cuspidal, this is an isobaric decomposition and the summands of both sides should coincide up to permutation. This implies the result.
\end{proof}

We will use \cite[Lemma 4.3]{calegari2013irreducibility} in several different places. In fact, at one point we need a slight improvement of this lemma. Let us state and prove this slight generalization. The proof is identical to the one in \cite{calegari2013irreducibility}.
\begin{lemma}\label{GerenalizedCG}
    Suppose that the group $G$ acts irreducibly on a finite dimensional vector space $V$. Let $G'$ be a finite index normal subgroup of $G$ and let $V = \oplus_{i=1}^k W_i$ be the decomposition of $V$ into isotypic representations of $G'$. Then either $k=1$ or there exists a proper subgroup $H$ of $G$ containing $G'$ and an isotypic representation $W$ of $H$ such that $V \simeq \mathrm{Ind}^G_H(W)$. Moreover, we can assume $\dim W = \dim W_1$.
\end{lemma}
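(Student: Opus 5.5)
This is the Clifford-theory argument of \cite[Lemma 4.3]{calegari2013irreducibility}, with a little extra attention paid to dimensions. Since $G'$ is normal of finite index in $G$, the representation $V|_{G'}$ is semisimple: it is a finite sum of conjugates of any irreducible $G'$-subrepresentation, because the sum of the $G$-translates of such a subrepresentation is $G$-stable and hence equals $V$. Let $V=\oplus_{i=1}^k W_i$ be the isotypic decomposition. For $g\in G$, the translate $gW_i$ is again a $G'$-isotypic subspace, of type the $g$-conjugate of the type of $W_i$. So $G$ permutes the set $\{W_1,\dots,W_k\}$.

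\textbf{Transitivity.} The action of $G$ on $\{W_1,\dots,W_k\}$ is transitive. Indeed, for any orbit $O$, the sum $\oplus_{W_i\in O}W_i$ is a nonzero $G$-stable subspace, so by irreducibility it equals $V$. In particular all the $W_i$ have the same dimension $\dim W_1$.

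\textbf{The case $k>1$.} Let $H=\mathrm{Stab}_G(W_1)$. Then $H$ contains $G'$, since $G'$ preserves each of its isotypic components. Moreover $H$ is proper, because the action is transitive on $k>1$ elements, and $[G:H]=k$. Set $W=W_1$, viewed as a representation of $H$. It is isotypic as a $G'$-representation by construction. Choose coset representatives $g_1=1,\dots,g_k$ of $G/H$ with $g_iW_1=W_i$. Then
\[
V=\bigoplus_{i=1}^k g_iW
\]
as vector spaces. The natural map $\mathrm{Ind}^G_H(W)=\mathbb{Z}[G]\otimes_{\mathbb{Z}[H]}W\to V$, given by Frobenius reciprocity from the inclusion $W\hookrightarrow V$, is $G$-equivariant and surjective, since its image is $G$-stable and nonzero. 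Both sides have dimension $k\dim W$, so the map is an isomorphism. This gives $V\simeq \mathrm{Ind}^G_H(W)$ with $\dim W=\dim W_1$.

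\textbf{Main obstacle: the word ``isotypic''.} Here I read the statement as requiring $W$ to be isotypic as a representation of $G'$, which is what $W_1$ satisfies. If the intended meaning is instead that $W$ is isotypic as an $H$-representation, one more step is needed. First note that $W$ is an irreducible $H$-representation. Any nonzero $H$-subrepresentation $U\subseteq W$ induces to a nonzero $G$-subrepresentation $\oplus_i g_iU$ of $V$, which must be all of $V$; hence $U=W$. An irreducible representation is trivially isotypic, so this also settles the stronger reading. This irreducibility check is the only point needing care; everything else is standard Clifford theory.
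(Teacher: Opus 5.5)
Your proof is correct and is essentially the paper's argument: $G$ acts transitively on the isotypic components, $H$ is the stabilizer of $W_1$, and Frobenius reciprocity plus a dimension count finishes the proof. The only difference is cosmetic. You use the map $\mathrm{Ind}^G_H(W)\to V$ adjoint to the inclusion and show it is surjective, whereas the paper uses the map $V\to \mathrm{Ind}^G_H(W_1)$ adjoint to the projection and shows it is injective. Your extra checks, that $V|_{G'}$ is semisimple and that $W$ is irreducible as an $H$-representation, fill in points the paper leaves implicit.
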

\begin{proof}
    Since $G'$ is normal and $W_i$'s are  isotypic representations of $G'$ of different types, the group $G$ acts on the set $\{W_1,\dots,W_k\}$. This action is transitive, because if it had a proper block, this block would give a non-trivial proper subspace of $V$ that is stable under $G$, which contradicts irreducibility of $V$. In particular, all $W_i$'s have the same dimension. Let $H$ be the stabilizer of $W_1$, hence $W_1$ is also a representation of $H$ and $\dim (W_1) = \dim (V)/k=\dim (V)/[G:H]$. Therefore, if $H=G$, then $k=1$ and we are done. So we can assume that $H$ is a proper subgroup. \\
    Now note that we have a non-trivial $H$-equivariant projection $V \rightarrow W_1$. By Frobenius reciprocity, we obtain that there is a non-trivial $G$-equivariant map 
    \[
    V \rightarrow \mathrm{Ind}^G_H(W_1).
    \]
    Since $V$ is irreducible as a representation of $G$, this map is injective. Since both sides have the same dimension, this map must be an isomorphism. 
\end{proof}

For any ring $A$, a Galois representation $\rho: \Gamma_K \rightarrow \hG(A)$ is called potentially abelian if there exists a finite extension $L/K$ such that $\rho(\Gamma_L)$ is an abelian subgroup of $\hG(A)$.

\begin{cor}
    Let $\rho : \Gamma_K \rightarrow \GL_n(\overline{\QQ_p})$ be a Hodge-Tate irreducible Galois representation with distinct Hodge-Tate weights. If $\rho$ is potentially abelian, then there exists a degree $n$ extension $L/K$ and a character $\chi$ of $\Gamma_L$ such that 
    \[
    \rho = \mathrm{Ind}^L_{K}(\chi).
    \]
\end{cor}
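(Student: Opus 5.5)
We will apply Lemma \ref{GerenalizedCG} with $G=\Gamma_K$ acting on $V=\overline{\QQ_p}^n$ via $\rho$, and with $G'=\Gamma_{L'}$. Here $L'/K$ is a finite Galois extension such that $\rho(\Gamma_{L'})$ is abelian. Such an $L'$ exists: if $\rho(\Gamma_{L_0})$ is abelian for some finite $L_0/K$, we may replace $L_0$ by its Galois closure, since the image of a subgroup of an abelian group is still abelian. Then $G'$ is a finite index normal subgroup of $G$. Since $\rho(\Gamma_{L'})$ is an abelian group of matrices, we can decompose $V|_{\Gamma_{L'}}$ into isotypic pieces $W_1,\dots,W_k$, each corresponding to a distinct character of $\Gamma_{L'}$. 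Because $\rho(\Gamma_{L'})$ is commutative and the semisimplification issue is harmless here, each $W_i$ is the generalized eigenspace of a character $\psi_i$.

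\textbf{Step 1: use distinct Hodge-Tate weights to show every isotypic piece is one-dimensional.} Restriction to $\Gamma_{L'}$ preserves the Hodge-Tate weights at every embedding, since restriction to an open subgroup does not change $D_{\mathrm{HT}}$ after extending scalars. So $\rho|_{\Gamma_{L'}}$ is still Hodge-Tate with $n$ distinct weights (for each embedding). A character $\psi_i$ appearing with multiplicity $m_i$, even just in the semisimplification, contributes its Hodge-Tate weight $m_i$ times. Distinctness of the weights therefore forces $m_i=1$ for all $i$, hence $k=n$ and $\dim W_i=1$. In particular, $\rho|_{\Gamma_{L'}}$ is a direct sum of $n$ pairwise distinct characters.

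\textbf{Step 2: apply Lemma \ref{GerenalizedCG}.} If $k=1$, then $n=1$ and the statement holds with $L=K$, $\chi=\rho$. Otherwise the lemma gives a proper subgroup $H\supseteq \Gamma_{L'}$ of $\Gamma_K$ and an isotypic representation $W$ of $H$ with $\dim W=\dim W_1=1$ and $V\simeq \mathrm{Ind}^G_H(W)$. Comparing dimensions gives $[G:H]=n$. Since $H$ contains the open subgroup $\Gamma_{L'}$, it is open, so $H=\Gamma_L$ for a degree $n$ extension $L/K$. The one-dimensional $W$ is then a character $\chi$ of $\Gamma_L$, and $\rho\simeq\mathrm{Ind}^L_K(\chi)$. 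Here $\chi$ is continuous, since it is realized on a line inside $V$.

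The main point to check carefully is Step 1. Specifically, we must confirm that the multiplicity of a character in $\rho|_{\Gamma_{L'}}$ forces a repeated Hodge-Tate weight. We should phrase this via the semisimplification, because an abelian image need not be semisimple a priori. In fact irreducibility of $\rho$ makes $\rho|_{\Gamma_{L'}}$ semisimple, by Clifford theory for the normal finite index subgroup, so this concern disappears.
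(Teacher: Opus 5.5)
Your proposal is correct and follows the paper's proof. You pass to a finite Galois $L'$ over which the image is abelian, use the distinct Hodge--Tate weights to see that $\rho|_{\Gamma_{L'}}$ is a sum of $n$ pairwise distinct characters, and apply Lemma \ref{GerenalizedCG} with $\dim W_1=1$; your extra details (normality via the Galois closure, semisimplicity of the restriction via Clifford, the case $n=1$, and openness of $H$) only make explicit what the paper leaves implicit.
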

\begin{proof}
    There exists a finite extension $L'/K$ such that $\rho|_{\Gamma_{L'}}$ has abelian image. Therefore, it decomposes as a sum of characters
    \[
    \rho|_{\Gamma_{L'}} = \chi_1 \oplus \chi_2 \oplus \cdots \oplus \chi_n. 
    \]
    Since $\rho$ is regular (has distinct Hodge-Tate weights), these characters are distinct. Therefore, we are in the situation of Lemma \ref{GerenalizedCG} with $W_1=\chi_1$. This implies the result. 
\end{proof}

Finally, we need some information about two dimensional automorphic Galois representations, over both totally real and CM fields. Recall that a Galois representation is called strongly irreducible if its base change to any finite extension is irreducible. This is equivalent to the Lie algebra of the image of the representation being irreducible. 

\begin{lemma}\label{GL(2)}
    Let $K$ be a totally real or CM field and $\pi$ be a regular algebraic cuspidal automorphic representation of $\GL_2(\AA_K)$. Assume that $\pi$ is non-dihedral. Then for every finite place $\fp$ of $\QQ(\pi)$, the Galois representation $\rho_{\pi,\fp}$ attached to $\pi$ is strongly irreducible. In particular, the semisimple part of the Lie algebra of the image of
    \[
    \rho_{\pi,\fp}:\Gamma_K \rightarrow \GL_2(\overline{E_\fp})
    \]
    is equal to $\mathfrak{sl}_2(\overline{E_\fp})$. 
\end{lemma}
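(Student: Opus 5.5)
The strategy is to first establish irreducibility of $\rho_{\pi,\fp}$ over every finite extension of $K$, and then read off the Lie algebra of the image. Two inputs are needed.

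\textbf{Step 1: irreducibility over $K$ and over its finite extensions.} Over totally real $K$, irreducibility of $\rho_{\pi,\fp}$ is known. Over CM fields it is known for regular algebraic cuspidal $\pi$ of $\GL_2$, via the work of Allen--Newton--Thorne and related results. It follows from local-global compatibility and the regularity of the Hodge--Tate weights: a one-dimensional summand would be locally algebraic, hence would come from an algebraic Hecke character, and this contradicts cuspidality through a comparison of $L$-functions.

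Next, note that for any finite extension $L/K$, the base change $\mathrm{BC}_{L/K}(\pi)$ is still cuspidal whenever $\pi$ is non-dihedral. This is clear for solvable $L/K$ by Arthur--Clozel. For a general $L$, we reduce to the case where $\rho_{\pi,\fp}|_{\Gamma_L}$ is reducible, and then pass to the Galois closure.

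\textbf{Step 2: a reducible restriction forces $\pi$ to be dihedral.} Suppose $\rho=\rho_{\pi,\fp}$ is irreducible but $\rho|_{\Gamma_{L}}$ is reducible for some finite Galois $L/K$. Since $\Gamma_L$ is normal in $\Gamma_K$, the restriction $\rho|_{\Gamma_L}$ is semisimple. There are two cases.

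If $\rho|_{\Gamma_L}=\chi_1\oplus\chi_2$ with $\chi_1\neq\chi_2$, then Lemma \ref{GerenalizedCG} applies with $W_1=\chi_1$ and gives $\rho\simeq \mathrm{Ind}^{K'}_K\psi$ for some quadratic $K'/K$.

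If instead $\chi_1=\chi_2$, then $\rho(\Gamma_L)$ is scalar, so $\rho$ has finite projective image. This is impossible: the Hodge--Tate weights of $\rho$ are distinct, and they are preserved under restriction to $\Gamma_L$, so $\rho|_{\Gamma_L}$ cannot be a twist of a scalar character.

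So $\rho\simeq\mathrm{Ind}^{K'}_K\psi$, which means $\rho\simeq\rho\otimes\varepsilon_{K'/K}$, where $\varepsilon_{K'/K}$ is the quadratic character of $K'/K$. Comparing Frobenius traces via strong multiplicity one gives $\pi\simeq\pi\otimes\varepsilon_{K'/K}$. By Labesse--Langlands this means $\pi$ is dihedral, contradicting the hypothesis.

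\textbf{Step 3: the Lie algebra.} The Zariski closure $\mathcal{G}$ of the image has identity component $\mathcal{G}^\circ$, and $\mathcal{G}^\circ$ is the closure of $\rho(\Gamma_L)$ for some finite $L$. By Step 2, $\mathcal{G}^\circ$ acts irreducibly on $\overline{E_\fp}^2$. A connected reductive irreducible subgroup of $\GL_2$ is either $\GL_2$ or $\SL_2$, possibly together with scalars: a torus acting irreducibly is impossible, since a torus has a common eigenvector. Hence $\mathrm{Lie}(\mathcal{G})^{\mathrm{ss}}=\mathfrak{sl}_2$.

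Reductivity itself comes from semisimplicity. An irreducible representation has reductive Zariski closure, and strong irreducibility passes this to $\mathcal{G}^\circ$. Alternatively, observe that $\mathcal{G}^\circ$ has no nontrivial unipotent radical: the fixed vectors of such a radical would give an invariant line.

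\textbf{Main obstacle.} The hard input is the irreducibility of $\rho_{\pi,\fp}$ over CM fields, where local-global compatibility at $p$ is delicate. I would cite the known results there rather than reprove them.
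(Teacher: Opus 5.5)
\textbf{Gap in the case $\chi_1=\chi_2$ over CM fields.} Your overall architecture is sound. The Clifford dichotomy gives two cases: either $\rho$ is induced from a quadratic extension, which you exclude by non-dihedrality via strong multiplicity one and Labesse--Langlands, or $\rho$ has finite projective image, which you exclude by Hodge--Tate regularity. Reading off $\mathfrak{sl}_2$ from a connected irreducible subgroup of $\GL_2$ is also fine. This is essentially the content packaged in \cite[Theorem 7.1.2]{allen2023potential}, which the paper invokes.

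The problem is the case $\chi_1=\chi_2$ when $K$ is CM. There you use that $\rho_{\pi,\fp}$ is Hodge--Tate with distinct weights at an \emph{arbitrary} $\fp$. Over CM fields this is not known for all $\fp$. The paper's source, \cite[Theorem 7.1.10]{allen2023potential}, only gives that $\rho_{\pi,\fp}$ is crystalline with the expected regular weights for $\fp$ above a Dirichlet density one set of rational primes. Local-global compatibility at $\ell=p$ in general needs extra hypotheses, as the paper notes around \cite{a2024rigidity}. So at the remaining $\fp$ your argument does not rule out finite projective image. Your sketch of irreducibility in Step 1 leans on the same unavailable input, though you intend to cite that result anyway. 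Over totally real $K$ this issue does not arise, since de Rham-ness with the expected weights is known at all $p$ for Hilbert modular forms.

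\textbf{The missing idea: propagate along the compatible system.} This is why the paper first works at the density-one primes, concludes there that $\rho_{\pi,\fp}$ is not a twist of an Artin representation, and then applies \cite[Theorem 7.1.2]{allen2023potential} to get strong irreducibility at every $\fp$. You can also do this by hand:
\begin{enumerate}
\item Suppose $\mathrm{ad}^0\rho_{\pi,\fp}$ has finite image of order $N$.
\item The characteristic polynomials of $\mathrm{ad}^0$ at unramified Frobenii lie in $E[X]$ and are independent of $\fp$. So they have only $N$-th roots of unity as roots.
\item By Chebotarev density and closedness of the condition, for every $\fp'$ every element $g$ of the Zariski closure of $\mathrm{ad}^0\rho_{\pi,\fp'}(\Gamma_K)$ has $g^N$ unipotent.
\item This closure has reductive identity component, because $\rho_{\pi,\fp'}$ is semisimple. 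Any torus in that component is killed by $N$, hence trivial, so the component itself is trivial.
\item Hence finite projective image at one $\fp$ forces it at every $\fp'$. This includes the density-one primes where the Hodge--Tate weights are known to be distinct, which is a contradiction.
\end{enumerate}

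\textbf{Minor point.} Your Step 1 remark about cuspidality of base change along non-solvable $L/K$ is unjustified, since such base change is not available. It is never used in Steps 2 and 3, so you should simply drop it.
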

\begin{proof}
The representations $\rho_{\pi,\fp}$ are irreducible and form a very weakly compatible system  by \cite[Theorem 7.1.10]{allen2023potential}. Therefore, over a set of rational primes of Dirichlet density one, $\rho_{\pi,\fp}$ is crystalline of the expected Hodge-Tate weights. In particular, it is regular. This implies that over those primes, $\rho_{\pi,\fp}$ is not a twist of an Artin representation. Now, \cite[Theorem 7.1.2]{allen2023potential} implies that for all $\fp$, the representation $\rho_{\pi,\fp}$ is strongly irreducible. In particular the tautological representation of the semisimple part of the Lie algebra of the image is irreducible. The only faithful irreducible semisimple Lie subalgebra of $\mathfrak{gl}_2$ is $\mathfrak{sl}_2$, and we are done. 
\end{proof}

\section{Locally Algebraic Representations}\label{SectionLocally}
Serre first defined the notion of locally algebraic representations, which can be seen as a precursor to more sophisticated later notions of $p$-adic Hodge theory, to study abelian Galois representations \cite{serre1972proprietes}. Böckle and Hui generalized some of the results of Serre about abelian compatible families of Galois representations to \textit{weak abelian direct summands} of such families \cite{bockle2025weak}. They used this to prove irreducibility of Galois representations of $\GL(3)$ in the non-self-dual case. In this section, we quickly review their results. 

Assume that $K$ is a number field. A $p$-adic representation of $\Gamma_K$ is called abelian if its image is an abelian group. In other words, it factors through $\Gamma_K^{ab}$, the abelianization of $\Gamma_K$:
\[
\phi_p:\Gamma_K^{ab}\rightarrow \GL_n(\overline{\QQ_p}).
\]
Such abelian representations can be related to adelic groups using the Artin reciprocity map:
\[
\mathrm{Art_K}:\AA_K^\times /K^\times \rightarrow \Gamma_K^{ab}.
\]

\begin{defi}
    An abelian semisimple $p$-adic Galois representation 
    \[
    \phi_p:\Gamma_K^{ab} \rightarrow \GL_n(\overline{\QQ_p})
    \]
    is called locally algebraic if there exists a morphism of algebraic groups
    \[
    r:T_{\overline{\QQ_p}} \rightarrow \GL_{n,\overline{\QQ_p}}
    \]
    such that the composition
    \[
    T(\QQ_p) = \prod_{v|p} K_v^\times \rightarrow \AA_K^\times /K^\times \xrightarrow{\mathrm{Art_K}} \Gamma_K^{ab} \xrightarrow{\phi_p} \GL_n(\overline{\QQ_p})
    \]
    is equal to $r|_{T(\QQ_p)}$ on a small enough neighborhood of the identity element on the $p$-adic Lie group $T(\QQ_p) = (K \otimes_\QQ \QQ_p)^\times$.  
\end{defi}
It is essentially due to Fontaine that for an abelian semisimple Galois representation of $\Gamma_K$, being locally algebraic, being de Rham at places above $p$, and being Hodge-Tate at places above $p$ are equivalent.
What makes the definition of locally algebraic representations interesting is the following result of Serre (see \cite{henniart1980representations}).
\begin{theo}\label{SerreLocAlg}
    For an abelian semisimple Galois representation of $\Gamma_K$, being locally algebraic is equivalent to being $E$-rational for some number field $E$. 
\end{theo}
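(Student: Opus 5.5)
The plan is to reduce to characters and then prove the two implications separately. The easy one is locally algebraic $\Rightarrow$ $E$-rational. The hard one is the converse, which needs a $p$-adic transcendence input; this is the result of Serre and Henniart cited before the statement.

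\emph{Reduction to characters.} Since $\phi_p$ is abelian and semisimple over $\overline{\QQ_p}$, it is a direct sum of continuous characters $\chi_1,\dots,\chi_n$. Both properties pass between $\phi_p$ and the $\chi_i$, after possibly enlarging $E$ by finitely many algebraic numbers.
\begin{itemize}
\item A morphism $r\colon T_{\overline{\QQ_p}}\to\GL_n$ with abelian semisimple image can be diagonalised into characters of $T$, and conversely characters assemble into such an $r$.
\item If every $\chi_i$ has Frobenius values in a fixed number field, then the characteristic polynomials of $\phi_p$ are $E$-rational.
\item Conversely, if the characteristic polynomials of $\phi_p$ are $E$-rational, the eigenvalues lie in a bounded-degree extension of $E$. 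A splitting argument for the finitely many factors reduces to each $\chi_i$ being rational over a number field $E'$.
\end{itemize}
So from now on $\chi\colon\Gamma_K^{ab}\to\overline{\QQ_p}^\times$ is a single character.

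\emph{Locally algebraic $\Rightarrow$ $E$-rational.}
\begin{enumerate}
\item The algebraic character has the form $r=\prod_{\sigma}\sigma^{n_\sigma}$ on $T(\QQ_p)=(K\otimes\QQ_p)^\times$, where $\sigma$ runs over embeddings $K\hookrightarrow\overline{\QQ_p}$ and $n_\sigma\in\ZZ$.
\item Define an idelic character $\psi(x)=\chi(\mathrm{Art}_K(x))\,r(x_p)^{-1}$.
\item By local algebraicity, $\psi$ is trivial on an open subgroup $U_p$ of $T(\QQ_p)$.
\item Away from $p$, continuity together with the fact that $\overline{\QQ_p}^\times$ has no small pro-$\ell$ subgroups for $\ell\neq p$ shows that $\psi$ is trivial on an open compact subgroup $U^p$. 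Hence $\psi$ factors through a finite quotient of the finite ideles; the infinite part plays no role, being connected modulo a finite group.
\item Since $\chi\circ\mathrm{Art}_K$ is trivial on $K^\times$, we get $\psi(a)=r(a)^{-1}$ for $a\in K^\times$. In particular $r$ kills a finite-index subgroup of the units $\mathcal O_K^\times$.
\item For $v\nmid p$ unramified, choose $h$ with $v^h=(a)$ and $a\equiv 1$ modulo the conductor. Then $\chi(\mathrm{Frob}_v)^h=r(a)^{\pm1}\cdot(\text{root of unity of bounded order})$.
\item The values $r(a)$ lie in the Galois closure of $K$. After choosing $h$ uniformly, namely the class number times the exponent of the finite quotient, $\chi(\mathrm{Frob}_v)$ is determined up to a bounded root of unity by this identity. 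Directly, $\chi(\mathrm{Frob}_v)=\psi(\varpi_v)\,r(\varpi_v)^{\pm1}$ with $\psi$ of finite order.
\item Thus all Frobenius values lie in one number field $E$: the Galois closure of $K$, adjoined with the values of the finite-order character $\psi$ and with $h$-th roots.
\end{enumerate}

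\emph{$E$-rational $\Rightarrow$ locally algebraic.}
\begin{enumerate}
\item Restricted to a small open subgroup of $(K\otimes\QQ_p)^\times$, the map $\chi\circ\mathrm{Art}_K$ is $\exp\circ L\circ\log$ for a $\QQ_p$-linear form $L\colon K\otimes\QQ_p\to\overline{\QQ_p}$. Write $L=\sum_\sigma c_\sigma\sigma$ with $c_\sigma\in\overline{\QQ_p}$. We must show every $c_\sigma\in\ZZ$, after passing to a finite-index situation.
\item Triviality on $K^\times$ says that for each $a\in K^\times$ generating a product of primes away from $p$ and congruent to $1$ to high $p$-adic order, we have $\exp(\sum_\sigma c_\sigma\log\sigma(a))=\chi(\text{ideal})^{-1}$.
\item Since $\chi(\text{ideal})\in E$, the numbers $\exp(\sum_\sigma c_\sigma\log\sigma(a))$ are algebraic for a finitely generated subgroup of $a$'s of rank at least $[K:\QQ]$. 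This is the $S$-units, which have rank large enough once $S$ contains sufficiently many primes.
\end{enumerate}
The main obstacle is deducing $c_\sigma\in\ZZ$ from this. I would apply Waldschmidt's $p$-adic theorem on linear forms / the six-exponentials-type theorem, as Henniart does. First show the $c_\sigma$ are algebraic. Then use the algebraicity of $\exp$ on many multiplicatively independent $S$-units to force the map $a\mapsto\sum_\sigma c_\sigma\log\sigma(a)$ to agree with the logarithm of an algebraic character, which gives integrality. If Hodge–Tate information were available one could use Sen theory for the first part, but here only $E$-rationality is given, so transcendence is unavoidable.
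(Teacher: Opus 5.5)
The paper gives no proof of this statement. It quotes Serre, with a reference to Henniart, whose proof of the hard direction rests on Waldschmidt's $p$-adic transcendence theorem. Your outline follows that same route, so deferring the transcendence input is no worse than the paper's citation. The problem is the step you claim comes for free from it: algebraicity of the values on $S$-units cannot force $c_\sigma\in\ZZ$.

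Over $\QQ$ ($p$ odd), take the character of $\Gamma_\QQ^{ab}\simeq\widehat{\ZZ}^\times$ that is $x\mapsto\exp(\tfrac12\log x)$ on $1+p\ZZ_p$ and trivial on $\mu_{p-1}\times\prod_{q\neq p}\ZZ_q^\times$. Its value at $\mathrm{Frob}_\ell$ is $\sqrt{\ell}$ for every prime $\ell\equiv 1 \pmod p$. So all its Frobenius values are algebraic, of degree at most $2$ over $\QQ(\mu_{p-1})$, yet $c=\tfrac12$.

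What a Schneider--Lang/Waldschmidt type theorem can give is that the graph of $\chi$ near $1$ lies in a $d$-dimensional subtorus of $T\times\GG_m$. That means only $c_\sigma=m_\sigma/N$ with $m_\sigma,N\in\ZZ$. Integrality is a separate arithmetic step using $E$-rationality, and your sketch omits it.

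The same example shows that your reduction to characters is unjustified. Eigenvalues of degree at most $n$ over $E$ need not lie in a single number field. Rationality of each $\chi_i$ over a number field is an output of the theorem, not an input. You do not need it for the transcendence step, since algebraic values on a finitely generated group already lie in a number field. But the integrality step must then use the whole of $\phi_p$. Here is one way:
\begin{enumerate}
\item Take $a$ in a deep congruence subgroup of $K^\times$ such that $(a)$ is a degree-one prime above some $\ell$ that splits completely in $EK^{\mathrm{gal}}$. Then $\chi_i((a))^{-N}=\prod_\sigma\sigma(a)^{m_{i,\sigma}}$.
\item Distinct weight vectors give distinct right-hand sides at such $a$. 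Hence stability of the eigenvalue multiset of $\phi_p(\mathrm{Frob}_{(a)})$ under $\mathrm{Gal}(\overline{\QQ}/E)$ forces $\chi_i((a))\in EK^{\mathrm{gal}}$.
\item Comparing valuations at the primes of $EK^{\mathrm{gal}}$ above $\ell$ then gives $N\mid m_{i,\sigma}$ for every $\sigma$.
\end{enumerate}

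In the easy direction there is also a concrete error: $\psi$ is not of finite order. It is trivial on the open subgroup $U=U_pU^p(K\otimes\RR)^{\times,\circ}$, but on $K^\times$ it equals $r^{-1}$. So it factors only through the infinite group $\AA_K^\times/U$; what is finite is $\AA_K^\times/K^\times U$. With your definition one even has $\chi(\mathrm{Frob}_v)=\psi(\varpi_v)$ on the nose, since $\varpi_v$ has trivial $p$-component.

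Likewise, $\chi(\mathrm{Frob}_v)^h=r(a)^{\pm1}$ with $a$ depending on $v$ only makes each value an $h$-th root of some element of $K^{\mathrm{gal}}$. Infinitely many such roots need not lie in one number field. The standard fix is to choose representatives $x_1,\dots,x_k$ of $\AA_K^\times/K^\times U$ and write $\varpi_v=x_i\,a\,u$ with $a\in K^\times$, $u\in U$. Then $\chi(\mathrm{Frob}_v)=\psi(x_i)\,r(a)^{-1}$. So all Frobenius values lie in $K^{\mathrm{gal}}(\psi(x_1),\dots,\psi(x_k))$, which is a number field because each $\psi(x_i)^h$ lies in $K^{\mathrm{gal}}$.
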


The main new input of the work of Böckle and Hui in \cite{bockle2025weak} is to generalize  Theorem \ref{SerreLocAlg} to the weak abelian direct summands of (not necessarily abelian) semisimple Galois representations.

\begin{defi}
    Let
    \[
    \rho_p:\Gamma_K \rightarrow \GL_n(\overline{\QQ_p})
    \]
    be an arbitrary Galois representation and 
    \[
    \psi_p:\Gamma_K \rightarrow \GL_m(\overline{\QQ_p})
    \]
    be a semisimple abelian Galois representation. We say that $\psi_p$ is a weak abelian direct summand of $\rho_p$ if there exists a density one set of (rational) primes $\mathcal{L}$ such that for each $\ell \in \mathcal{L}$ and finite place $v$ of $K$ above $\ell$, the representations $\rho_p$ and $\psi_p$ are both unramified at $v$ and the characteristic polynomial of $\psi_p(\mathrm{Frob}_v)$ divides the characteristic polynomial of $\rho_p(\mathrm{Frob}_v)$. 
\end{defi}

The obvious example of the above situation is when $\psi_p$ is in fact a direct summand of $\rho_p$, but there are examples where this does not hold. Finally, here is the main result of \cite{bockle2025weak}:

\begin{theo}[Theorem 1.1 of \cite{bockle2025weak}] \label{BoeckelHuiMain}
    Let $E \subset \overline{\QQ_p}$ be a number field and
    \[
    \rho_p:\Gamma_K \rightarrow \GL_n(\overline{\QQ_p})
    \]
    be a semisimple, $E$-rational, $p$-adic Galois representation. Let the representation $\psi_p$ be a weak abelian direct summand of $\rho_p$. Then $\psi_p$ is locally algebraic and hence de Rham at all places  above $p$. 
\end{theo}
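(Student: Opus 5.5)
The plan is to reduce to Serre's theorem (Theorem \ref{SerreLocAlg}) by showing that, after harmless modifications, the character values of $\psi_p$ at Frobenius elements lie in a single number field. Since $\psi_p$ is semisimple and abelian, it is a direct sum of characters after enlarging the coefficient field. Passing to a suitable finite extension $K'/K$ (which changes neither local algebraicity nor the density-one hypothesis), I will treat each character $\psi$ separately. Each such $\psi$ is a weak abelian direct summand of $\rho_p$: for $v$ above primes in a density-one set $\mathcal{L}$, $\psi(\mathrm{Frob}_v)$ is a root of the characteristic polynomial $P_v(X)\in E[X]$ of $\rho_p(\mathrm{Frob}_v)$.

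The first step is to upgrade the divisibility from Frobenius elements to the whole image. Let $\mathcal{H}$ be the Zariski closure of the image of $\rho_p\oplus\psi_p$. By Chebotarev, Frobenius elements at places above primes in $\mathcal{L}$ are dense in $\Gamma_K$, hence Zariski dense in $\mathcal{H}$. The condition ``$\mathrm{charpoly}(\psi_p)$ divides $\mathrm{charpoly}(\rho_p)$'' is Zariski closed, for instance expressed through resultants. It therefore holds on all of $\mathcal{H}$, and in particular on the neutral component $\mathcal{H}^\circ$. This says that the abelian factor of $\mathcal{H}^\circ$ through which $\psi$ acts is given by weights that occur among the weights of a maximal torus of $\mathcal{H}^\circ$ acting via $\rho_p$. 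Consequently, on the subgroup $\Gamma_{K'}$ with image in $\mathcal{H}^\circ$, $\psi$ is identified with one of the ``eigenvalue characters'' of the torus part of $\rho_p$.

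The second step, which I expect to be the main obstacle, is the rationality of these values. A priori $\psi(\mathrm{Frob}_v)$ lies in the splitting field of $P_v$, an extension of $E$ of degree at most $n!$, but these fields vary with $v$. Serre's argument, which rests on Waldschmidt's transcendence theorem applied to the logarithm of the restriction of $\psi$ to $\prod_{v|p}\mathcal{O}_{K_v}^\times$, needs the values to lie in one fixed number field. To arrange this, I would first try to exploit the following facts. Frobenius tori (in Serre's sense) for an $E$-rational system are, for a density-one set of $v$, maximal tori of $\mathcal{H}^\circ$. Their character groups carry Galois actions that factor through a finite group independent of $v$, after enlarging $K'$. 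Combining these would produce a fixed number field $E'$ containing all the eigenvalues corresponding to the weight through which $\psi$ factors, for $v$ in a density-one set. If this fails directly, the fallback is to replace $\psi$ by suitable products of its Galois conjugates, that is, norms down to $E$ of the relevant weight, which are $E$-rational characters. One then deduces local algebraicity of $\psi$ from that of these auxiliary characters: a bounded power of $\psi$ lies in the group generated by them, and local algebraicity is insensitive to finite-order twists and to taking roots.

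The final step is routine. Once $\psi|_{\Gamma_{K'}}$ is shown to be $E'$-rational for some number field $E'$, Theorem \ref{SerreLocAlg} shows that $\psi|_{\Gamma_{K'}}$ is locally algebraic. Local algebraicity descends from $K'$ to $K$, since the norm map relates the tori $T_{K'}$ and $T_K$ near the identity. By Fontaine's equivalence recalled above, $\psi$, and hence $\psi_p$, is de Rham at all places above $p$.
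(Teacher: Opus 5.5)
The paper gives no proof of this statement. It is quoted from B\"ockle--Hui, so your proposal has to stand on its own, and it has a genuine gap exactly at the step you flag. Your Step 1 is fine: after passing to $K'$ with connected monodromy group $G^\circ$, it shows $\psi|_{\Gamma_{K'}}=\mu\circ\rho_p$ for a character $\mu$ of $G^\circ$. Step 2, which carries all the content, is not established, and the tools you propose for it do not work.

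\textbf{Frobenius tori.} Maximality of Frobenius tori on a density-one set is not available for an arbitrary semisimple $E$-rational $\rho_p$. Serre's argument needs Weil-number-type information on the eigenvalues (integrality away from the residue characteristic, purity, control of the valuations above it). That information is what cuts the countably many possible multiplicative relations down to finitely many, and none of it is assumed here.

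\textbf{No fixed field.} Even granting maximality, a Galois action on $X^*(T_v)$ through a group of bounded order does not give a fixed field. The splitting fields of $P_v$ still vary with $v$, and $\mu(\rho_p(\mathrm{Frob}_v))$ lies in a field independent of $v$ only if $\mathrm{Gal}(\overline{E}/E)$ fixes $\mu$. Take an $E$-rational $\rho=\rho_1\oplus\rho_2$ with $\psi=\det\rho_1$ a summand of $\wedge^{\dim\rho_1}\rho$. This is exactly the use in Proposition~\ref{isInduction}, with $\psi=\det\sigma$ inside $\wedge^2\rho$. There nothing prevents conjugation over $E$ from exchanging the eigenvalues of $\rho_1$ and $\rho_2$ at $\mathrm{Frob}_v$, since the monodromy group carries no $E$-structure.

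\textbf{The fallback.} It is not meaningful as stated. $\mathrm{Gal}(\overline{E}/E)$-conjugates of a $p$-adic character are not continuous characters. Pointwise norms of $\psi(\mathrm{Frob}_v)$ over $v$-dependent fields are not the Frobenius values of any character. The characters that are visibly $E$-rational, such as $\det\rho_p$, only see symmetric combinations like $\det\rho_1\cdot\det\rho_2$. So there is no reason a power of $\psi$ lies in the group they generate.

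\textbf{Taking roots.} Local algebraicity is \emph{not} insensitive to taking roots. For $p$ odd, the cyclotomic character $\varepsilon$ restricted to $\Gamma_{\QQ(\zeta_p)}$ takes values in $1+p\ZZ_p$, so $\varepsilon^{1/2}$ is a continuous character. Its Sen weight is half that of $\varepsilon$, hence non-integral, so it is not Hodge--Tate, although its square is locally algebraic.

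This example also pinpoints the real difficulty. The Frobenius values $\pm\sqrt{Nv}$ of $\varepsilon^{1/2}$ are algebraic of degree at most $2$. So no argument using only that the $\psi(\mathrm{Frob}_v)$ are algebraic of bounded degree over $E$ can succeed; the $E$-rationality of $\rho_p$ must be used in an essential way. Your Step 1 device is the right kind of tool: a Zariski-closed condition on $G^\circ$ satisfied by a positive-density set of Frobenii holds identically. For instance, for each root of unity $\zeta\neq 1$ it shows that the set of $v$ with $\zeta\psi(\mathrm{Frob}_v)$ a root of $P_v$ has density zero. It then has to be combined with a transcendence input of Serre--Waldschmidt type adapted to this weaker rationality. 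Theorem~\ref{SerreLocAlg} cannot simply be invoked, since no fixed coefficient field for $\psi$ is available a priori. It also requires rationality at almost all places, whereas you would only have it on a density-one set.
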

We will also make use of a recent preprint of Böckle and Hui \cite{bockle2026on}. Let us state the two results from their work that we will use frequently. The first result shows that compatible families can always be realized over a number field: 
\begin{theo}[Theorem 1.3 of \cite{bockle2026on}]\label{FieldSplitsAll}
    Let $K$ be a number field and $$\{\rho_p:\Gamma_K \rightarrow \GL_n(\overline{E'_p})\}_{p \in \Sigma_\QQ}$$ a semisimple $E'$-rational compatible system of Galois representations. Then there exists a finite Galois extension $E/E'$ such that this compatible family can be realized over $E$, i.e. each $\rho_p$ may be assumed to have values in $\GL_n(E_p)$. 
\end{theo}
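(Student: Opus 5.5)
The plan is to prove this in two stages: first get a realization over some finite extension for each prime $p$ separately, then make the extension independent of $p$ using compatibility. Fix $p$. The representation $\rho_p$ is semisimple with characteristic polynomials of Frobenii in $E'[X]$. By compactness, $\rho_p$ is defined over a finite extension of $\QQ_p$. The obstruction to descending a semisimple representation to the field generated by its traces is a Brauer class, as in Schur index theory. Concretely, decompose $\rho_p$ into absolutely irreducible constituents. Each constituent $\sigma$ has a field of traces $T_\sigma$, which is finite over $E'_\fp$ inside $\overline{E'_\fp}$. The $\overline{\QQ_p}$-span of $\sigma(\Gamma_K)$ descends to a central simple algebra $A_\sigma$ over $T_\sigma$, and $\sigma$ is realizable over any field splitting $A_\sigma$.

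The second step is to control these local data uniformly in $p$. I would first realize the trace fields globally. Since the family is $E'$-rational, the constituents are permuted by automorphisms, and their traces at Frobenii generate fields of bounded degree (at most $n!$) over $E'_\fp$. The main point is to show that these local fields come from a single number field. I would try to attach to each $\rho_p$ a global object, such as the algebra $\mathcal{A}$ generated over $E'$ by the traces. For almost all $p$ I expect the compatible system to be absolutely irreducible with the same decomposition type. The decomposition pattern of $\rho_p$ is an invariant of the system, as in Serre's and Larsen–Pink's work on the connected monodromy group. Using this, I would define a number field $E_1 \supset E'$ containing the traces of the constituents, taking $E_1$ to be the field generated by characteristic polynomial factors over a density one set of Frobenii.

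The third step handles the Brauer obstruction. For each $p$, the class $[A_\sigma]$ lies in $\mathrm{Br}(E_{1,\fp})$ and has order dividing $\dim\sigma \le n$. By local class field theory, any extension of $E_{1,\fp}$ of degree divisible by $n!$ splits it. So I would take $E$ to be a Galois extension of $E_1$ such that for every place $\fp$ the local degree $[E_\mathfrak{P}:E_{1,\fp}]$ is divisible by $n!$. Such a global field exists by Grunwald–Wang type arguments, but only for finitely many places at once. This is the main obstacle. Local degrees of a fixed number field are $1$ at the split places, which form a set of positive density, so no single finite extension can have large local degree everywhere. Instead one must show that for all but finitely many $p$ the Brauer class $[A_\sigma]$ is already trivial. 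I would try to prove this by exhibiting, for almost all $p$, a Frobenius element $\rho_p(\mathrm{Frob}_v)$ with distinct eigenvalues lying in $E_{1,\fp}$. Such an element gives a split maximal commutative subalgebra of $A_\sigma$, hence forces $A_\sigma$ to be split. The existence of such Frobenii for almost all $p$ follows from Chebotarev applied to regular semisimple elements whose characteristic polynomial splits over $E_1$. Finally, I would apply Grunwald–Wang to the remaining finitely many $p$ to obtain $E$.
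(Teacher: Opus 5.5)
The paper gives no proof of this statement; it is quoted from B\"ockle--Hui \cite{bockle2026on}, so your argument has to stand on its own. As written it has a genuine gap. The whole difficulty is uniformity in $p$, and neither Step 2 nor Step 3 supplies an input that is independent of $p$.

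\textbf{Step 2.} You never actually construct $E_1$. The decomposition of $\rho_\fp$ into irreducibles is not known to be independent of $\fp$ in general. What Serre and Larsen--Pink give independently of $\fp$ is the component group, the rank and the formal character, and these do not determine the decomposition. The field generated by factors of the Frobenius polynomials $P_v\in E'[X]$ over a density one set of $v$ is usually an infinite extension of $E'$. For instance, for a non-CM elliptic curve over $\QQ$ the fields $\QQ(\sqrt{a_\ell^2-4\ell})$ are infinite in number, by Serre's open image theorem. If you mean instead the factors cut out by the constituents $\sigma$ of a given $\rho_\fp$, that field depends on $\fp$. Even its finiteness over $E'$ for a single one-dimensional $\sigma$ is exactly the content of Theorem \ref{BoeckelHuiMain} combined with Serre's theorem. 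Step 2 is also unnecessary. A semisimple $\rho_\fp$ with traces in a field $L\supseteq E'_\fp$ is realizable over $L$ as soon as, for each constituent $\sigma$, the Schur index of $\sigma$ over $L(\mathrm{tr}\,\sigma)$ divides its multiplicity. Galois-conjugate constituents are grouped into $L$-rational blocks, so you never need $T_\sigma\subseteq L$.

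\textbf{Step 3.} The quantifiers are in the wrong order. For fixed $\fp$, whether the eigenvalues of $\rho_\fp(\mathrm{Frob}_v)$ lie in $E_{1,\mathfrak{P}}$ depends on how $\mathfrak{P}$ splits in the splitting field of $P_v$. Chebotarev only moves $\mathrm{Frob}_v$ around inside $\rho_\fp(\Gamma_K)$. So you would first need to know that this image contains a regular element with all eigenvalues in $E_{1,\mathfrak{P}}$, which is essentially a splitting statement of the kind you are trying to prove. You also give no reason why the exceptional set of $p$ is finite; big-image inputs such as Larsen's only give density one. Grunwald--Wang can impose conditions at only finitely many places, so the final step does not close the argument.

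\textbf{How to repair the regular case.} Your key idea does work if you fix the Frobenius globally rather than per $p$.
\begin{enumerate}
\item Choose places $v,v'$ of distinct residue characteristics with $P_v$ and $P_{v'}$ separable.
\item Let $E$ be the splitting field of $P_vP_{v'}$ over $E'$.
\item For every $\mathfrak{P}$ simultaneously, each constituent $\sigma$ of $\rho_\fp$ has $\sigma(\mathrm{Frob}_v)$ with distinct eigenvalues in $E\subseteq E_{\mathfrak{P}}$. Use $v'$ instead of $v$ for the $\fp$ above the residue characteristic of $v$.
\item Hence the central simple algebra of $\sigma$ over $E_{\mathfrak{P}}(\mathrm{tr}\,\sigma)$ contains a split maximal commutative \'etale subalgebra, so it is split.
\end{enumerate}
This proves the statement whenever some $P_v$ is separable, with no $E_1$ and no Grunwald--Wang.

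\textbf{What is still missing.} When no Frobenius is regular, the argument above gives nothing. This happens for repeated constituents, for constituents whose restriction to the identity component of the monodromy group is isotypic, and in finite-image cases. There one needs a further global, $p$-independent input, for instance Frobenius tori together with the $\fp$-independence of the rank and of the component group, plus splitting fields for the finite part. Your proposal does not provide such an input.
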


We will use this result without explicitly mentioning it again. So we freely assume that our Galois representations have values in completions of a number field whenever we need it. 
The second result is about comparing the decomposition of a compatible family into irreducible representations and the decomposition of the residual representations of the family. We only need a special case of this result:

\begin{theo}[Theorem 1.4 of \cite{bockle2026on}]\label{residualIrr}
    Let $K$ be a number field and $$\{\rho_\fp:\Gamma_K \rightarrow \GL_n(\overline{E_\fp})\}_{\fp \in \Sigma_E}$$ a semisimple $E$-rational compatible system of Galois representations. Assume that for a density one set of rational primes $\mathcal{P'}$, $\rho_{\fp}$ is absolutely irreducible if $\fp$ is above $p \in \mathcal{P'}$. Then, there exists a density one set of rational primes $\mathcal{P}$, such that $\overline{\rho_{\fp}}$ is absolutely irreducible if $\fp$ is above $p \in \mathcal{P}$.
\end{theo}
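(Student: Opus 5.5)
Since this is Theorem 1.4 of \cite{bockle2026on}, quoted here, I only sketch how I would prove it. The idea is to compare the $p$-adic and residual images through a $p$-independent integral structure, and to use that the weights of the tautological representation stay bounded while $p\to\infty$.

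\emph{Integral models and uniform bigness.} Let $\mathbf{G}_\fp\subset \GL_{n,\overline{E_\fp}}$ be the algebraic monodromy group of $\rho_\fp$ and $\mathbf{G}_\fp^\circ$ its identity component. By Serre's results on compatible systems, the component group $\mathbf{G}_\fp/\mathbf{G}_\fp^\circ$ is independent of $\fp$. It is cut out by a single finite Galois extension $L/K$, so that $\mathbf{G}_\fp^\circ$ is the Zariski closure of $\rho_\fp(\Gamma_L)$. By Larsen's maximality \cite{larsen1995maximality}, refined by Hui, there is a density one set of rational primes $p$ with the following properties for every $\fp\mid p$:
\begin{enumerate}
\item[(i)] $\mathbf{G}^\circ_\fp$ extends to a reductive group scheme $\underline{G}_\fp$ over the ring of integers of a finite unramified extension of $\QQ_p$.
\item[(ii)] A lattice stable under $\rho_\fp(\Gamma_K)$ is also stable under $\underline{G}_\fp$.
\item[(iii)] $\overline{\rho_\fp}(\Gamma_L)$ contains $\underline{G}_\fp^{\mathrm{der}}(\FF_q)^{+}$, the subgroup generated by unipotents, with index bounded independently of $p$.
\end{enumerate}
Since the formal characters of $\mathbf{G}_\fp^\circ\hookrightarrow\GL_n$ are independent of $\fp$, the highest weights of the tautological representation, measured against a maximal torus of $\underline{G}_\fp$, are bounded independently of $p$.

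\emph{Passing irreducibility to the special fibre.} Let $p$ lie in the intersection of $\mathcal P'$ with the set above, and write $V=\overline{E_\fp}^{\,n}$ for the space of $\rho_\fp$. Decompose $V|_{\mathbf{G}_\fp^\circ}=\bigoplus_i W_i$ into isotypic components. By Lemma \ref{GerenalizedCG} applied to $\rho_\fp(\Gamma_K)\supset\rho_\fp(\Gamma_L)$, $\rho_\fp$ is induced from an isotypic representation of an intermediate open subgroup $\Gamma_{L'}$, with $L'$ among finitely many fields independent of $p$. Each irreducible $\mathbf{G}_\fp^\circ$-constituent has bounded highest weight. Hence for $p$ large its reduction is the irreducible $\underline{G}_\fp\otimes\overline{\FF_p}$-module $L(\lambda)$, with $\lambda$ $p$-restricted and in the lowest alcove, where Weyl modules are irreducible. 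By Steinberg's restriction theorem, $L(\lambda)$ stays irreducible on $\underline{G}_\fp^{\mathrm{der}}(\FF_q)^{+}$, and therefore on $\overline{\rho_\fp}(\Gamma_L)$.

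\emph{Clifford theory, and the main obstacle.} It remains to recover irreducibility of $\overline{\rho_\fp}$ on all of $\Gamma_K$. First, distinct isotypic types in characteristic $0$ must stay distinct after reduction; this holds for $p$ large because their highest weights are distinct and bounded. Second, the multiplicity space of the isotypic component is an irreducible projective representation of a finite group of bounded order. Its reduction stays irreducible once $p$ is prime to that order, so we discard finitely many further primes. Then the residual version of the Clifford/Mackey argument shows $\overline{\rho_\fp}$ is induced from an irreducible representation and is itself irreducible. I expect the main obstacle to be the first step, namely producing the integral models $\underline{G}_\fp$ together with the uniform lower bound on the residual image for a density one set of primes. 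That is where Larsen's density one maximality and Hui's $p$-independence of the formal bi-character are indispensable. The weight computations afterwards are routine.
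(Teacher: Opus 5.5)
The paper does not prove this statement. It is quoted from \cite{bockle2026on} and used only as a black box (in the proof of Theorem \ref{localGlobal}), so there is no in-paper argument to compare with. Judged on its own, your strategy (Larsen's density-one maximality, integral models, weights in the lowest alcove, Steinberg's restriction theorem, then Clifford theory) is reasonable for the derived part. There is, however, a genuine gap at a step you treat as routine: the claim that distinct isotypic types of $V|_{\Gamma_L}$ stay distinct after reduction ``because their highest weights are distinct and bounded''.

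Two non-isomorphic $\mathbf{G}^\circ_\fp$-constituents $U_i,U_j$ may agree on $\mathbf{G}^{\mathrm{der}}_\fp$. That is, $U_j\simeq U_i\otimes\psi$ with $\psi=\tilde\chi\circ\rho_\fp|_{\Gamma_L}$ for a nontrivial character $\tilde\chi$ of the torus $\mathbf{G}^\circ_\fp/\mathbf{G}^{\mathrm{der}}_\fp$; by Zariski density and connectedness, such a $\psi$ has infinite order.

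\begin{itemize}
\item Knowing $L(\lambda_i)\not\simeq L(\lambda_j)$ as modules for the algebraic group $\underline{G}_\fp\otimes\overline{\FF_p}$ says nothing about the finite group $\overline{\rho_\fp}(\Gamma_L)$. That group meets the central torus only in a finite subgroup which (i)--(iii) do not control: Larsen's theorem concerns the simply connected cover of the derived group, and Steinberg's theorem only separates restricted weights of $\underline{G}^{\mathrm{der}}_\fp$.
\item An infinite-order $p$-adic character can be residually trivial (think of $\epsilon^{p-1}$). So you must actually prove $\overline{U_i}\otimes\bar\psi\not\simeq\overline{U_i}$, and that needs control of $\bar\psi$.
\item In the potentially abelian case ($\mathbf{G}^\circ_\fp$ a torus, e.g.\ $\rho_\fp$ induced from a character, which the hypotheses allow), this is the entire content of the theorem, and your Larsen--Steinberg input is vacuous.
\end{itemize}

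The missing ingredient is the abelian input of Section \ref{SectionLocally}, made uniform in $p$. First, $\psi$ is a direct summand of the semisimple $E$-rational representation $(\rho_\fp^\vee\otimes\rho_\fp)|_{\Gamma_L}$. Hence it is locally algebraic by Theorem \ref{BoeckelHuiMain}, and, being of infinite order, it has nonzero infinity type. Second, you must show that this infinity type, together with the finite-order part of $\psi$ on inertia above $p$, is bounded independently of $p$. This can be extracted from the fact that every $\psi(\mathrm{Fr}_w)$ is a ratio of two roots of the $p$-independent polynomial $\mathrm{CharPoly}(\rho_\fp(\mathrm{Fr}_w))$. Only then does $\bar\psi|_{I_w}$, for a suitable $w\mid p$, have order $>n$ for large $p$. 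That contradicts $\bar\psi^{\dim U_i}=1$, which follows from $\overline{U_i}\otimes\bar\psi\simeq\overline{U_i}$ by taking determinants. With this added, your Clifford step goes through.

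A minor point: the bounded-index claim in (iii) is false as stated when the centre of $\mathbf{G}^\circ_\fp$ is a nontrivial torus (e.g.\ with cyclotomic determinant), though you never use it.
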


\section{Irreducibility of Non-self-dual Representations}\label{SectionIrreducibility}

In this section we prove our main irreducibility result. Throughout this section, we assume that $K$ is a totally real number field. 

Let $\pi$ be a regular algebraic cuspidal automorphic representation of $GL_4(\AA_K)$. Assume that $\pi$ is neither a self-twist nor essentially self-dual. Recall from the introduction that there exists a compatible family of Galois representations 
$$\{ \rho_{\pi,\fp}:\Gamma_K\rightarrow \GL_4(\overline{\QQ_p})\}_{\fp \in \Sigma_{\QQ(\pi)}}$$
attached to it. 
The goal of this section is to prove that each $\rho:=\rho_{\pi,\fp}$ is irreducible. Assume that $\rho = \sigma \oplus \tau$ (the decomposition of course depends on $\fp$) and without any loss of generality suppose that $\dim \sigma \geq \dim \tau$. Then we either have $\dim \sigma =3$ (the $(3,1)$-case) or $\dim \sigma = 2$ (the $(2,2)$-case). 
\subsection{The $(3,1)$-case}
This case is much easier. Except for using Theorem \ref{BoeckelHuiMain} instead of local-global compatibility at $\ell=p$, the proof is the exact same proof as in the work of Calegari and Gee \cite[Lemma 8.4]{calegari2013irreducibility}. 
\begin{prop}\label{not31}
    Keeping the notation and the assumptions as above, we cannot have $\dim \tau = 1$.
\end{prop}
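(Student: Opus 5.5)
Suppose $\rho=\sigma\oplus\tau$ with $\dim\tau=1$; we aim for a contradiction. Since $\tau$ is an actual direct summand of the semisimple $E$-rational representation $\rho$, it is in particular a weak abelian direct summand. Theorem \ref{BoeckelHuiMain} then shows that $\tau$ is locally algebraic, hence de Rham at all places above $p$. A locally algebraic character of $\Gamma_K$ with $K$ totally real is, by the classical theory of Serre and Weil, the $p$-adic realization of an algebraic Hecke character. So $\tau=\rho_{\chi,\fp}$ for an algebraic Hecke character $\chi$ of $K$, which is of the form $|\cdot|^{w}$ times a finite order character. This is the only point where the Calegari--Gee argument used local-global compatibility at $\ell=p$; here Böckle--Hui replaces it.

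Next we pass to $L$-functions and follow \cite[Lemma 8.4]{calegari2013irreducibility}. From $\rho=\sigma\oplus\tau$ we get
\[
\wedge^2\rho=\wedge^2\sigma\oplus(\sigma\otimes\tau),\qquad \wedge^2\sigma\simeq\sigma^\vee\otimes\det\sigma .
\]
Also $\det\sigma=\det\rho\cdot\tau^{-1}$, where $\det\rho$ corresponds to the algebraic Hecke character $\omega_\pi$, suitably normalized. Hence $\wedge^2\rho\otimes\tau^{-1}\omega_\pi^{-1}$ and $\rho^\vee$ share a common $3$-dimensional summand: $\sigma^\vee\otimes\det\sigma\otimes(\tau\omega_\pi)^{-1}\simeq\sigma^\vee\otimes\tau^{-2}$ up to normalization. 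More useful is the following count. Twisting, $\wedge^2\rho\otimes\chi'$ contains $\sigma\otimes\tau\chi'$, and $\rho^\vee\otimes\psi$ contains $\sigma^\vee\psi$. We choose the algebraic Hecke character $\mu=\omega_\pi^{-1}$ (twisted by $\chi$ appropriately) so that $\wedge^2\rho\otimes\mu\supset\sigma^\vee\otimes\tau^{-1}$ and $\rho^\vee\otimes\tau^{-1}\supset\sigma^\vee\otimes\tau^{-1}$.

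We then compare Frobenius traces. By Theorem \ref{Kim}, $\wedge^2\pi\otimes\mu$ and $\pi^\vee\otimes\chi^{-1}$ are automorphic, and the isobaric Rankin--Selberg $L$-function
\[
L\bigl(s,(\wedge^2\pi\otimes\mu)\times(\pi\otimes\chi)\bigr)
\]
has an Euler factor at almost all $v$ equal to that of $(\wedge^2\rho\otimes\mu)\otimes\rho\otimes\tau$, which contains $(\sigma^\vee\otimes\sigma)$. Because $\sigma$ and the shared summand are $3$-dimensional, this tensor product contains the trivial representation with multiplicity at least $1$. The order of the pole at $s=1$ can be computed on the Galois side by a Tauberian/Chebotarev comparison of Dirichlet series of $\log L$. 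This shows that $\wedge^2\pi\otimes\mu$ and the contragredient of $\pi\otimes\chi$ share an isobaric constituent. Since $\pi$ is cuspidal of rank $4$ and $\wedge^2\pi$ has rank $6$, the constituent must be $\pi^\vee\otimes\chi^{-1}$, so $\wedge^2\pi\otimes\mu=(\pi^\vee\otimes\chi^{-1})\boxplus\eta_1\boxplus\eta_2$ with $\eta_i$ Hecke characters. This means $\wedge^2\pi$ is non-cuspidal, so by Theorem \ref{Asgari} $\pi$ is either essentially self-dual (case $(\alpha)$) or a self-twist (case $(\beta)$). Both are excluded by hypothesis, a contradiction.

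The main obstacle is the middle step: making the $L$-function pole argument rigorous. This requires controlling the order of pole at $s=1$ of a Rankin--Selberg product of an isobaric sum, using Jacquet--Shalika nonvanishing on $\mathrm{Re}(s)=1$, and matching it with the Galois-side multiplicity via Frobenius traces. As a fallback, I would try the cleaner route of \cite{calegari2013irreducibility}: show directly that $L(s,\pi\otimes\chi^{-1})$ and $L(s,\pi^\vee\otimes\ldots)$ force a pole in $L(s,\wedge^2\pi\times\pi\otimes\nu)$, and use the cuspidality dichotomy of Theorem \ref{Asgari}.
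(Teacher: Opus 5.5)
Your first step matches the paper: Theorem \ref{BoeckelHuiMain} makes $\tau$ the realization of an algebraic Hecke character $\chi$. Your shortcut $\det\sigma=\det\rho\cdot\tau^{-1}$ is a valid replacement for the paper's appeal to $\wedge^3\rho$.

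\textbf{The gap is the middle step.} Knowing that $R=(\wedge^2\rho\otimes\mu)\otimes(\rho\otimes\tau)$ contains $\sigma^\vee\otimes\sigma\supset\mathbf{1}$ does not let you conclude that $L^S\bigl(s,(\wedge^2\pi\otimes\mu)\times(\pi\otimes\chi)\bigr)$ has a pole at $s=1$. Write $R=\mathbf{1}\oplus R_0$. You would need $L^S(s,R_0)$ to be holomorphic and non-zero at $s=1$, equivalently that the numbers $\iota(\mathrm{tr}R_0(\mathrm{Frob}_v))$ average to $0$ in Dirichlet density.

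But $R_0$ contains pieces such as $\mathrm{ad}^0\sigma$ and $\sigma\otimes\sigma\otimes\tau^2\mu$ that are not known to be automorphic. Chebotarev does not help either: it equidistributes Frobenii in the $p$-adic image against continuous functions on that group, and $g\mapsto\iota(\mathrm{tr}R_0(g))$ is not such a function.

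The statement ``pole order equals multiplicity of the trivial Galois constituent'' is a Tate-type conjecture. As a sanity check, if it were available you could apply it to $L(s,\pi\times\pi^\vee)$, which has a simple pole, and to $\mathrm{End}(\rho)$, which contains $\mathbf{1}$ twice when $\rho$ is reducible. That would prove irreducibility for every RAC $\pi$ in one line.

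Note also that under the standing hypotheses $\wedge^2\pi$ is cuspidal by Theorem \ref{Asgari}, so your $\GL_6\times\GL_4$ Rankin--Selberg $L$-function is entire. Your contradiction therefore rests entirely on the unproved Galois-to-automorphic pole transfer, and your fallback has the same defect. (Separately, $\wedge^2\rho\otimes\tau^{-1}\omega^{-1}\supset\sigma^\vee\otimes\tau^{-2}$ is not a summand of $\rho^\vee$; that is a genuine twist, not a normalization. Only $\mu=\omega^{-1}$ works.)

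\textbf{The missing idea} is to avoid tensoring with $\sigma$ and instead eliminate $\sigma$ in the ring of virtual representations, so that only objects known to be automorphic remain. Set $\omega=\det\rho$, and use $\sigma=\rho-\tau$ together with $\wedge^2\sigma\simeq\sigma^\vee\otimes\omega\tau^{-1}$. Your common summand $\sigma^\vee\otimes\tau^{-1}$ then cancels, leaving an honest isomorphism
\[
(\wedge^2\rho\otimes\omega^{-1})\oplus\tau^{2}\omega^{-1}\oplus\tau^{-2}\;\simeq\;(\rho^\vee\otimes\tau^{-1})\oplus(\rho\otimes\tau\omega^{-1}).
\]
Twist by $\tau^{2}$ so that $\tau^{-2}$ becomes $\mathbf{1}$, and pass to $L$-functions.
\begin{itemize}
\item The left side then carries the pole of $\zeta_K$ at $s=1$. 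It cannot be cancelled, because Hecke $L$-functions and $L(s,\wedge^2\pi\otimes\nu)$ do not vanish at $s=1$ (Shahidi).
\item The right side is a product of two entire cuspidal $\GL_4$ $L$-functions.
\end{itemize}
This is exactly the paper's argument.

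Alternatively, strong multiplicity one for isobaric sums gives, up to normalization and with $\chi$ attached to $\tau$,
\[
(\wedge^2\pi\otimes\omega_\pi^{-1})\boxplus\chi^{2}\omega_\pi^{-1}\boxplus\chi^{-2}\cong(\pi^\vee\otimes\chi^{-1})\boxplus(\pi\otimes\chi\omega_\pi^{-1}).
\]
This is impossible because the right side has no $\GL_1$ constituents. Neither version needs Theorem \ref{Asgari}.
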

\begin{proof}
    $\tau$ is a character and hence abelian. Therefore, by Theorem \ref{BoeckelHuiMain}, it is $E$-rational and hence locally algebraic. By class field theory, $\tau$ corresponds to some algebraic Hecke character $\eta$. Taking exterior square we have
\[
\wedge^2\rho = (\wedge^2 \sigma) \oplus (\sigma \otimes \tau).
\]
Since $\sigma$ is 3-dimensional, we have a pairing
\[
\sigma \times \wedge^2 \sigma \rightarrow \wedge^3\sigma = \det (\sigma),
\]
which gives the duality $\wedge^2 \sigma \simeq \sigma^\vee \otimes \det(\sigma)$. Note that $\det(\sigma) = \wedge^3\sigma$ is a direct summand of the $E$-rational Galois representation $\wedge^3 \rho$. Therefore, it is again locally algebraic by Theorem \ref{BoeckelHuiMain}.
By class field theory, $\det(\sigma)$ corresponds to an algebraic Hecke character $\chi$. Therefore, we have
\[
\wedge^2\rho = (\sigma^\vee \otimes \det(\sigma)) \oplus (\sigma \otimes \tau).
\]
Now, in the ring of virtual representations, one has $\sigma=\rho - \tau$. Hence, in this ring we can write
\[
\wedge^2 \rho = ((\rho^\vee-\tau^\vee) \cdot\det(\sigma)) + (\rho - \tau)\cdot \tau, 
\]
which means
\[
\wedge^2 \rho + \tau^{-1}\cdot\det (\sigma) + \tau^2 = \rho^\vee \cdot\det(\sigma) + \rho \cdot \tau.
\]
Now, twisting by $\tau^{-2}$ we get
\[
\wedge^2 \rho \cdot \tau^{-2} + \tau^{-3}\cdot\det (\sigma) + 1 = \rho^\vee \cdot\det(\sigma)\cdot\tau^{-2} + \rho \cdot \tau^{-1}.
\]
Taking $L$-functions of both sides and replacing Galois $L$-functions by the corresponding automorphic $L$-functions, we conclude: 
\[
L(s,\wedge^2\pi \otimes \eta^{-2})L(s,\eta^{-3}\chi) \zeta(s) = L(s,\pi^\vee\otimes\chi \eta^{-2}) L(s,\pi\otimes \eta^{-1}).
\]
The right hand side is an entire function, so on the left hand side the pole of $\zeta$ at $s=1$ should be canceled. $L(s,\eta^{-3}\chi)$ is the Hecke $L$-function of an algebraic character and is non-zero at $s=1$. Finally, the term $L(s,\wedge^2\pi \otimes \eta^{-2},s)$ is non-zero at $s=1$ by \cite[Theorem 1.1]{shahidi1997non}. This is a contradiction. 
\end{proof}

\subsection{The $(2,2)$-case}
This case is more complicated. We break the proof into a few steps. The first proposition contains all the $L$-function arguments that we make in this case:

\begin{prop}\label{isInduction}
    Keeping the notation and the assumptions as in the beginning of this section, suppose that $\dim \sigma = 2$. Then $\det (\sigma)^6=\det(\tau)^6$, but $\det(\sigma)^3 \neq \det (\tau)^3$. Moreover, If the degree two extension $K'/K$ is the splitting field of the character $\det(\sigma)^3 \cdot \det (\tau)^{-3}$, then there exists a cuspidal automorphic representation $\pi'$ of $\GL_3(\AA_{K'})$ such that
    \[
    \wedge^2 \pi = \mathrm{Ind}^{K'}_K(\pi').
    \]
\end{prop}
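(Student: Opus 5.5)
The plan is to translate the splitting $\rho=\sigma\oplus\tau$ into identities of $L$-functions attached to $\Lambda:=\wedge^2\pi$, a cuspidal automorphic representation of $\GL_6(\AA_K)$, and then read off poles and zeros at $s=1$.

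\emph{Step 1: set-up.} Write $a=\det\sigma$, $b=\det\tau$ and $V=\sigma\otimes\tau$. Then
\[
\wedge^2\rho = a\oplus b\oplus V .
\]
Since $a$ and $b$ are direct summands of the $E$-rational representation $\wedge^2\rho$, Theorem \ref{BoeckelHuiMain} shows they are locally algebraic. Hence they come from algebraic Hecke characters, which I also denote $a,b$. Since $\pi$ is neither a self-twist nor essentially self-dual, condition (iii) of Theorem \ref{Asgari} fails. So $\Lambda$ is cuspidal, and by Theorem \ref{Kim} it has the expected local components away from $2,3$. Consequently, for any algebraic Hecke character $\mu$, the function $L(s,\Lambda\otimes\mu)$ is entire and non-vanishing at $s=1$ (Jacquet--Shalika). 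The same holds for the Rankin--Selberg functions $L(s,\Lambda\times\Lambda^\vee\otimes\mu)$, except for the pole at $\mu=1$.

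\emph{Step 2: the linear $L$-function identities.} Taking $\mu=a^{-1}$ gives
\[
L(s,\Lambda\otimes a^{-1}) = \zeta_K(s)\,L(s,ba^{-1})\,L(s,V\otimes a^{-1}).
\]
The left side is holomorphic and non-zero at $s=1$. So $L(s,V\otimes a^{-1})$ must vanish at $s=1$ to order one more than $\mathrm{ord}_{s=1}L(s,ba^{-1})$, and symmetrically for $V\otimes b^{-1}$. I will record these orders as unknowns. They are only formal orders, since $V$ is not known to be automorphic. This is harmless because every identity we use has automorphic functions on one side.

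\emph{Step 3: the exterior cube.} Next compute $\wedge^3(a\oplus b\oplus V)$ in the ring of virtual representations. Use $\wedge^3V\simeq V\otimes ab$, $\wedge^2 V = \mathrm{Sym}^2\sigma\otimes b\oplus a\otimes\mathrm{Sym}^2\tau$ and $\mathrm{Sym}^2\sigma=\mathrm{ad}^0\sigma\otimes a$. Twisting by $\mu=a^{-2}b^{-1}$, the summand $V\otimes a^{-1}$ appears twice. Among the remaining summands are twists of $\mathrm{ad}^0\sigma$ and $\mathrm{ad}^0\tau$ by characters built from $a^{\pm1},b^{\pm1}$. Their orders at $s=1$ I control by comparing with $L(s,\Lambda\times\Lambda^\vee\otimes\nu)$, which contains the terms $V\otimes V^\vee = (1\oplus\mathrm{ad}^0\sigma)\otimes(1\oplus\mathrm{ad}^0\tau)$ and whose analytic behaviour is known. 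The exterior cube $L$-function $L(s,\Lambda,\wedge^3\otimes\mu)$ has the expected meromorphic continuation and, by Ginzburg--Rallis, controlled behaviour at $s=1$.

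The bookkeeping should show the following. For all the formally forced zeros of $L(s,V\otimes a^{-1})$ and $L(s,V\otimes b^{-1})$ to be compatible, some twisted exterior cube $L(s,\Lambda,\wedge^3\otimes\mu)$ must have a pole at $s=1$, with $\mu$ among the characters $a^{-2}b^{-1}$, $a^{-1}b^{-2}$ (up to a twist by $(ab)^{-1}$). Otherwise one gets a contradiction exactly as in Proposition \ref{not31}. Here the pairing $\wedge^3\Lambda\times\wedge^3\Lambda\to\omega_\Lambda=a^3b^3$ (the central character of $\Lambda$ corresponds to $\det\wedge^2\rho=a^3b^3$) is what produces the exponents $3$ and $6$.

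\emph{Step 4: Yamana's criterion.} By Yamana's answer to the Ginzburg--Rallis question, a pole of $L(s,\Lambda,\wedge^3\otimes\mu)$ at $s=1$ means that $\Lambda=\mathrm{Ind}^{K'}_K(\pi')$ for a quadratic $K'/K$ and a cuspidal $\pi'$ of $\GL_3(\AA_{K'})$. It also pins down the relation between $\mu$, $\omega_\Lambda$ and the quadratic character $\omega_{K'/K}$. Feeding in $\omega_\Lambda=a^3b^3$ and the specific $\mu$ forces $a^3b^{-3}=\omega_{K'/K}$. This gives both $a^6=b^6$ and $a^3\neq b^3$, and identifies $K'$ as the splitting field of $a^3b^{-3}$.

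One must also exclude the degenerate branch $a^3=b^3$ separately. There the character bookkeeping of Step 3 should produce a pole on the Galois side that no automorphic factor can absorb. I expect the main obstacle to be Step 3: arranging the virtual-representation identity so that every term other than the exterior cube and $\zeta$ is either automorphic with known behaviour at $s=1$ or cancels exactly. The zeros of the non-automorphic pieces $L(s,V\otimes\cdot)$ and $L(s,\mathrm{ad}^0\sigma\otimes\cdot)$ can only be controlled indirectly, so the signs of these orders must be tracked carefully.
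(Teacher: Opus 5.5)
Your Step 3, which you rightly flag as the crux, does not work as planned, for two concrete reasons.

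\textbf{Rankin--Selberg cannot control the $\mathrm{ad}^0$-pieces.} Besides characters and the twists $V\otimes a^{-1}\nu$, $V\otimes b^{-1}\nu$, the representation $\rho_\Lambda\otimes\rho_\Lambda^\vee\otimes\nu$ contains $(\mathrm{ad}^0\sigma\oplus\mathrm{ad}^0\tau)\otimes\nu$ \emph{and} the $9$-dimensional piece $\mathrm{ad}^0\sigma\otimes\mathrm{ad}^0\tau\otimes\nu$. That piece is not automorphic, so its order at $s=1$ is a fresh unknown. Each Rankin--Selberg identity therefore only determines the sum of the orders of these two pieces. The orders of $L(s,\mathrm{ad}^0\sigma\otimes\nu)L(s,\mathrm{ad}^0\tau\otimes\nu)$ that enter the exterior cube stay unconstrained, and no pole can be forced this way.

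The missing input is the exterior square of $\Lambda$, together with Shahidi's non-vanishing of $L(s,\Lambda,\wedge^2\otimes\nu)$ at $s=1$. Since $\wedge^2\rho_\Lambda=ab\oplus aV\oplus bV\oplus\wedge^2V$, one has virtually $\wedge^2V=\wedge^2\rho_\Lambda-(a+b)\rho_\Lambda+a^2+ab+b^2$. Combining this with $V=\rho_\Lambda-a-b$ and $\wedge^3V=V^\vee\otimes a^2b^2$ gives the identity the paper uses:
\[
\wedge^3\rho_\Lambda+(a+b)^2\rho_\Lambda=a^2b^2\rho_\Lambda^\vee+(a+b)\wedge^2\rho_\Lambda+a^3+b^3+ab\,\rho_\Lambda .
\]
Every term here is automorphic, or an exterior square or cube of $\Lambda$.

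\textbf{Your twist $\mu=a^{-2}b^{-1}$ is the wrong one.} Twisting the identity above by $\mu$ shows that $\mathrm{ord}_{s=1}L(s,\Lambda,\wedge^3\otimes\mu)$ equals the order at $s=1$ of
\[
L(s,\Lambda,\wedge^2\otimes a\mu)\,L(s,\Lambda,\wedge^2\otimes b\mu)\,L(s,a^3\mu)\,L(s,b^3\mu).
\]
For your $\mu$ this order is $0$ as soon as $a^2\ne b^2$ and the two exterior square $L$-functions are holomorphic at $s=1$. So nothing forces a pole. Moreover, the relevant Ginzburg--Rallis character would be $\mu^2\omega_\Lambda=a^{-1}b$, not $a^{-3}b^{3}$.

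The paper takes $\mu=a^{-3}$ instead. Then $a^3\mu=1$ contributes $\zeta_K(s)$ on the side opposite the exterior cube. The other factors on that side are non-vanishing at $s=1$ (Jacquet--Shalika, Shahidi, Hecke), and the cuspidal twists of $\Lambda$ next to $\wedge^3$ are entire. Hence $L(s,\Lambda,\wedge^3\otimes a^{-3})$ has a pole at $s=1$. Ginzburg--Rallis then forces $\mu^2\omega_\Lambda=a^{-3}b^3$ to be a non-trivial quadratic character. This gives $a^6=b^6$ and $a^3\ne b^3$ at once, so the branch $a^3=b^3$ needs no separate treatment. Yamana's result then yields $\Lambda=\mathrm{Ind}^{K'}_K(\pi')$.
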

\begin{proof}
We have $\rho=\sigma \oplus \tau$ for two-dimensional summands $\sigma$ and $\tau$. Taking exterior squares of both sides, we obtain
\[
\wedge^2 \rho = \det (\sigma) \oplus \det (\tau) \oplus (\sigma \otimes \tau).
\]
By Theorem \ref{BoeckelHuiMain}, this tells us that both $\det(\sigma)$ and $\det(\tau)$ are locally algebraic. Let the algebraic Hecke character $\chi$ correspond to $\det (\sigma)$ and $\eta$ to $\det (\tau)$, by class field theory. 
Now, recall that by Theorem \ref{Kim},
$\wedge^2\pi$ exists as an automorphic representation and $\wedge^2 \rho$ is the Galois representation associated with it (except that we might not know local-global compatibility at $p=2,3$ which is not important for the global arguments, so we ignore it). \\
On the other hand, since we assumed that $\pi$ is not essentially self-dual and not a self-twist, by Theorem \ref{Asgari}, we know that $\wedge^2 \pi$ is  cuspidal. Let us set $\Pi:=\wedge^2 \pi$ and $\beta=\sigma \otimes \tau$ to ease the notation. So we can write:
\begin{equation}\label{eq114}
    \rho_\Pi =  \det (\sigma) \oplus \det (\tau) \oplus \beta.
\end{equation}
Now if we apply $\wedge^2$ again, we get
\[
\wedge^2 \rho_\Pi = \wedge^2\beta \oplus (\beta \otimes \det(\sigma)) \oplus (\beta \otimes \det (\tau)) \oplus (\det (\sigma) \cdot \det (\tau)).
\]
Let us replace $\det(\sigma)$ and $\det (\tau)$ by their corresponding Hecke characters $\chi$ and $\eta$ by abuse of notation, for simplicity. Therefore:
\[
\wedge^2 \rho_\Pi = \wedge^2\beta \oplus (\beta \otimes \chi) \oplus (\beta \otimes \eta) \oplus (\chi \cdot \eta).
\]
So, in the ring of virtual representations, we find the following expression for $\wedge^2 \beta$:
\[
\wedge^2 \beta = \wedge^2 \rho_\Pi - (\beta \cdot \chi) - (\beta \cdot \eta) - \chi \cdot \eta.
\]
On the other hand, in this ring we also have $\beta = \rho_\Pi - \chi - \eta$. Hence, replacing all the $\beta$'s on the right hand side, we get
\begin{equation}
\begin{aligned}\label{wedge2}
\wedge^2 \beta = \wedge^2 \rho_\Pi - \rho_\Pi \cdot \chi + \chi^2 + \eta \cdot \chi - \rho_\Pi \cdot \eta + \chi \cdot \eta + \eta^2 - \chi \cdot \eta \\ = \wedge^2 \rho_\Pi - \rho_\Pi \cdot \chi - \rho_\Pi \cdot \eta + \chi^2 + \eta^2 + \chi \cdot \eta .
\end{aligned}
\end{equation}
This will be useful later because it virtually gives an expression for $\wedge^2 \beta$ in terms of  Galois representations that we know are automorphic. 
\\
Now we will look at the $\wedge^3$ of the decomposition (\ref{eq114}) of our 6-dimensional representation:
\[
\wedge^3 \rho_\Pi = \wedge^3 \beta \oplus (\wedge^2 \beta \otimes \chi) \oplus (\wedge^2 \beta \otimes \eta) \oplus (\beta \otimes \chi \otimes \eta)
\]
(all other terms that have $\wedge^{\geq 2}$ of a character vanish). Note that both sides are of dimension 20. Also, we have a pairing $$\wedge^3 \beta \times \beta \rightarrow \wedge^4 \beta = \det (\beta),$$ since $\beta$ is 4 dimensional. This gives us the duality $\wedge^3 \beta = \beta^\vee \otimes \det(\beta)= \beta^\vee \otimes \chi^2 \eta^2$. We then replace all $\wedge^2 \beta$'s by equation (\ref{wedge2}) and all $\beta$'s by $\rho_\Pi-\chi-\eta$ in the ring of virtual representations. We get:
\begin{align*}
\wedge^3 \rho_\Pi = (\rho_\Pi-\chi-\eta)^\vee \cdot \chi^2 \eta^2 \\ + \left( \wedge^2 \rho_\Pi - \rho_\Pi \cdot \chi - \rho_\Pi \cdot \eta + \chi^2 + \eta^2 + \chi \cdot \eta \right)\cdot \chi \\ +  \left( \wedge^2 \rho_\Pi - \rho_\Pi \cdot \chi - \rho_\Pi \cdot \eta + \chi^2 + \eta^2 + \chi \cdot \eta \right)\cdot \eta \\ +  \left( \rho_\Pi-\chi-\eta \right)\cdot \chi\eta
\end{align*}
Replacing $(\rho_\Pi-\chi-\eta)^\vee = \rho_{\Pi^\vee}-\chi^{-1}-\eta^{-1}$ and multiplying we get
\begin{align*}
\wedge^3 \rho_\Pi = \rho_{\Pi^\vee}\cdot \chi^2 \eta^2- \chi \eta^2- \chi^2 \eta  \\ +  \wedge^2 \rho_\Pi \cdot \chi - \rho_\Pi \cdot \chi^2 - \rho_\Pi \cdot \eta\chi + \chi^3 + \eta^2 \chi + \chi^2  \eta  \\ +   \wedge^2 \rho_\Pi \cdot \eta - \rho_\Pi \cdot \chi\eta - \rho_\Pi \cdot \eta^2 + \chi^2 \eta + \eta^3 + \chi  \eta^2  \\ +   \rho_\Pi \cdot \chi \eta-\chi^2 \eta- \chi\eta^2 
\end{align*}
After cancellation and bringing all the negative terms to the left hand side, we obtain:
\begin{align*}
\wedge^3 \rho_\Pi + \rho_\Pi \cdot \chi^2 + 2 \rho_\Pi \cdot \eta\chi + \rho_\Pi \cdot \eta^2 \\ = \rho_{\Pi^\vee}\cdot \chi^2 \eta^2 + \wedge^2 \rho_\Pi \cdot \chi + \wedge^2 \rho_\Pi \cdot \eta + \chi^3 + \eta^3 + \rho_\Pi \cdot \chi\eta 
\end{align*}
Now we twist with $\chi^{-3}$ and consider the $L$-functions of both sides:
\begin{align*}
L(s,\Pi, \wedge^3 \otimes \chi^{-3})L(s,\Pi \otimes \chi^{-1})L(s,\Pi\otimes \eta \chi^{-2}) L(s,\Pi\otimes \eta^2 \chi^{-3}) \\ = L(s,\Pi^\vee \otimes \chi^{-1}\eta^2)L(s,\Pi, \wedge^2 \otimes \chi^{-2})L(s,\Pi, \wedge^2 \otimes \eta\chi^{-3}) \zeta(s) L(s,\eta^{3}\chi^{-3})
\end{align*}
There are three terms in the above equation that we do not know are automorphic. Namely, $L(s,\Pi, \wedge^3 \otimes \chi^{-3})$, $L(s,\Pi, \wedge^2 \otimes \chi^{-2})$ and $L(s,\Pi, \wedge^2 \otimes \eta\chi^{-3})$. The latter two that appear on the right-hand side of the equation are non-zero at $s=1$ by \cite[Theorem 1.1]{shahidi1997non}. Therefore, the pole of the Riemann zeta function on the right-hand side is not canceled, since all the other terms on the right-hand side are $L$-functions of cuspidal automorphic representations and hence non-vanishing at $s=1$. This implies that the left-hand side has a pole at $s=1$. Except for $L(s,\Pi, \wedge^3 \otimes \chi^{-3})$, the other terms on the left-hand side are $L$-functions of cuspidal automorphic representations, so they are entire functions. We conclude that $L(s,\Pi, \wedge^3 \otimes \chi^{-3})$ has a pole at $s=1$. 
Now, \cite[Theorem 3.2]{ginzburg2000exterior} implies that this only happens if one has $\omega_\Pi ^ 2 \cdot \chi^{-12}=1$ but $\omega_\Pi \chi^{-6}\neq 1$. Since the central character of $\Pi$ is $\chi^3 \eta^3$ by \ref{eq114}, this means that $\chi^6 = \eta^6$ but $\chi^3 \neq \eta^3$. This proves the first part of the proposition.\\
For the second part, the main theorem of \cite{yamana2015poles} shows that this can only happen when $\Pi$ comes from an induction $\mathrm{Ind}^{K'}_K (\Pi')$ for $\Pi'$ a cuspidal automorphic representation of $\GL_3(\AA_{K'})$ for the quadratic extension $K'/K$ associated with the quadratic character $\chi^3\eta^{-3}$. This completes the proof.
\end{proof}
So far we have shown that if $\rho=\rho_{\pi,\fp}$ decomposes into the sum of two $2$-dimensional representations as $\sigma \oplus \tau$, then  there exists a quadratic extension $K'/K$ such that $\wedge^2 \pi = \mathrm{Ind}^{K'}_K(\pi')$ for a cuspidal automorphic representation $\pi'$ of $\GL_3(\AA_{K'})$. The extension $K'$ is associated to the quadratic character $\det(\sigma)^3\det(\tau)^{-3}$. We keep this notation for the rest of this section.
\begin{lemma}\label{isTotReal}
    The number field $K'$ is totally real.
\end{lemma}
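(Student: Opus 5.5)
The plan is to argue at the archimedean places, using the self-twist of $\Pi=\wedge^2\pi$ coming from Proposition \ref{isInduction}. Let $\varepsilon=\chi^3\eta^{-3}$ be the quadratic Hecke character cutting out $K'$. Since $\Pi=\mathrm{Ind}^{K'}_K(\Pi')$, we have $\Pi\simeq \Pi\otimes\varepsilon$, and therefore $\Pi_v\simeq \Pi_v\otimes\varepsilon_v$ at every place $v$ of $K$. Suppose $K'$ is not totally real. Then there is a real place $v$ of $K$ that becomes complex in $K'$, and for such a $v$ we have $\varepsilon_v=\mathrm{sgn}$. So it suffices to show that the $L$-parameter of $\Pi_v=\wedge^2\pi_v$ is \emph{not} invariant under twisting by $\mathrm{sgn}$ at any real place $v$. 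At archimedean places, Theorem \ref{Kim} gives local–global compatibility, so $\Pi_v$ is indeed $\wedge^2\pi_v$.

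First I describe $\pi_v$. Since $\pi$ is regular algebraic cuspidal, Clozel's purity lemma gives an integer $w$ such that the archimedean parameter of $\pi$ at each real place $v$ is
\[
\phi_{\pi_v}=I(p_1,q_1)\oplus I(p_2,q_2).
\]
Here $I(p,q)=\mathrm{Ind}_{\CC^\times}^{W_\RR}(z^p\bar z^q)$ with $p\neq q$, the four integers $p_1,q_1,p_2,q_2$ are pairwise distinct by regularity, and $p_i+q_i=w$ for both $i$. Parameters of this shape, built from $2$-dimensional blocks with no characters, occur because $n=4$ is even and the infinitesimal character is regular. Then
\[
\wedge^2\phi_{\pi_v}=\det I(p_1,q_1)\oplus\det I(p_2,q_2)\oplus \bigl(I(p_1,q_1)\otimes I(p_2,q_2)\bigr).
\]
Moreover $I(p_1,q_1)\otimes I(p_2,q_2)\simeq I(p_1+p_2,q_1+q_2)\oplus I(p_1+q_2,q_1+p_2)$.

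Next I check that both of these $2$-dimensional pieces are irreducible, that is, that the two exponents in each differ. For the first piece, $p_1+p_2=q_1+q_2$ together with $p_1+q_1=p_2+q_2$ would force $p_1=q_2$. For the second piece, $p_1+q_2=q_1+p_2$ would force $p_1-q_1=p_2-q_2$, hence $p_1=p_2$. Both contradict distinctness. So the characters occurring in $\wedge^2\phi_{\pi_v}$ are exactly the two determinants.

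Now $\det I(p,q)=\mathrm{sgn}^{\,p-q+1}|\cdot|^{p+q}$, up to the normalization conventions, which play no role since they affect both determinants equally. Because $p_i+q_i=w$, we get $p_i-q_i=w-2q_i$, so $p_1-q_1\equiv p_2-q_2 \pmod 2$. Hence $\det I(p_1,q_1)=\det I(p_2,q_2)$. This character occurs with multiplicity $2$ in $\wedge^2\phi_{\pi_v}$, while its twist by $\mathrm{sgn}$ does not occur at all. Twisting by $\mathrm{sgn}$ fixes each $2$-dimensional $I(\cdot,\cdot)$, so twisting the whole parameter by $\mathrm{sgn}$ changes it, and $\Pi_v\not\simeq\Pi_v\otimes\mathrm{sgn}$. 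This contradicts $\Pi_v\simeq\Pi_v\otimes\varepsilon_v$, so $\varepsilon_v$ is trivial at every real place and $K'$ is totally real.

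The main point needing care is the description of $\phi_{\pi_v}$. It needs the regularity and the purity constraint $p_i+q_i=w$ with the same $w$ for both blocks, and it must exclude $1$-dimensional summands in the archimedean parameter. I would cite Clozel's purity lemma and the standard classification of regular algebraic parameters of $\GL_4(\RR)$ for this step.
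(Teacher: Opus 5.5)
Your proof is correct, but it takes a different route from the paper.

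\textbf{The paper's route.} The paper argues on the Galois side in two lines. For a complex conjugation $c$ one has $\det\rho(c)=(-1)^{\lfloor (n+1)/2\rfloor}=+1$ since $n=4$. Because $\det\sigma(c),\det\tau(c)\in\{\pm1\}$, the quadratic character $\det(\sigma)^3\det(\tau)^{-3}$ takes the value $\det\sigma(c)\det\tau(c)=\det\rho(c)=1$ at $c$. Hence every real place of $K$ splits in $K'$.

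\textbf{Your route.} You stay entirely on the automorphic side. You use only that $\wedge^2\pi\simeq\mathrm{Ind}^{K'}_K(\Pi')$ forces $\Pi\simeq\Pi\otimes\varepsilon$. You then use Kim's archimedean compatibility and the explicit parameter $\wedge^2\bigl(I(p_1,q_1)\oplus I(p_2,q_2)\bigr)$ to show $\Pi_v\not\simeq\Pi_v\otimes\mathrm{sgn}$ at every real place. I checked the determinant formula, the decomposition of the tensor product, and the irreducibility of the two $2$-dimensional pieces; all are right.

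\textbf{Common input.} Both arguments rest on the same archimedean parity fact, $p_1-q_1\equiv p_2-q_2 \pmod 2$, i.e.\ the central character at a real place has no sign part. In the paper this is hidden inside the asserted value of $\det\rho(c)$; your computation makes it explicit.

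\textbf{What each approach buys.} The paper's version is shorter. However, it depends on the specific Galois-theoretic description of the character cutting out $K'$, and on matching that Galois character with the Hecke character in Yamana's theorem. Yours needs more archimedean input:
\begin{itemize}
\item Clozel's purity, to get the two-block shape. Purity, not just evenness of $n$, is what excludes $1$-dimensional summands: two such summands would both be of the form $\mathrm{sgn}^{\epsilon}|\cdot|^{w/2}$, hence equal on $\CC^\times$, contradicting regularity.
\item The local Langlands correspondence for $\GL_6(\RR)$.
\item Kim's local compatibility at infinite places.
\end{itemize}
In exchange, your argument is independent of the decomposition $\rho=\sigma\oplus\tau$ and of Galois representations altogether. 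It therefore proves a stronger statement: for any regular algebraic cuspidal $\pi$ of $\GL_4$ over a totally real field, every quadratic $K'/K$ from which $\wedge^2\pi$ is automorphically induced is totally real.
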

\begin{proof}
    Let $c \in \Gamma_K$ be any choice of complex conjugation. Then
    \[
    \det \rho(c)=(-1)^{\lfloor\frac{n+1}{2}\rfloor}=+1,
    \]
    since $n=4$. In fact, if one is willing to accept Arthur's work on endoscopic classification, it follows from \cite[Theorem 1.1]{caraiani2016image} that there are exactly two $+1$ and two $-1$ eigenvalues, but we do not need this. Since $\det \rho = \det \sigma \cdot \det \tau$, this implies that $c$ is in the kernel of the character $\det(\sigma)^3\det(\tau)^{-3}$ and hence the field associated with it is totally real. 
\end{proof}
\begin{lemma}\label{isRegular}
    The cuspidal automorphic representation $\pi'$ is regular algebraic.
\end{lemma}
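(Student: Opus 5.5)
The plan is to check regularity and algebraicity place by place at infinity, using only the archimedean $L$-parameters. Since $\Pi=\wedge^2\pi=\mathrm{Ind}^{K'}_K(\pi')$ and $K'/K$ is totally real by Lemma \ref{isTotReal}, every real place $v$ of $K$ splits into two real places $w_1,w_2$ of $K'$. Automorphic induction is compatible with the local Langlands correspondence at split places, so
\[
\phi_{\Pi_v}=\wedge^2\phi_{\pi_v}=\phi_{\pi'_{w_1}}\oplus\phi_{\pi'_{w_2}}
\]
as $6$-dimensional representations of $W_\RR$. Both regularity and algebraicity of $\pi'$ are conditions on the restrictions $\phi_{\pi'_{w}}|_{\CC^\times}$, so it suffices to identify each $3$-dimensional summand inside $\wedge^2\phi_{\pi_v}$. (If Theorem \ref{Kim} lacks local--global compatibility at some archimedean place, I would instead compare infinitesimal characters, which is all we need.)

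\textbf{Describing $\phi_{\pi_v}$.} Since $\pi$ is regular algebraic cuspidal, Clozel's purity lemma gives a weight $\mathsf{w}$. I then expect the regular algebraic $4$-dimensional parameter at the real place $v$ to be a sum $I(a_1,a_4)\oplus I(a_2,a_3)$, where the weights (suitably shifted) satisfy $a_1>a_2>a_3>a_4$ and $a_1+a_4=a_2+a_3=\mathsf{w}$. Here $I(p,q)=\mathrm{Ind}_{\CC^\times}^{W_\RR}(z^{p}\bar z^{q})$ denotes the $2$-dimensional irreducible parameter, with $p\neq q$. This holds because a regular parameter cannot contain two characters of $W_\RR$ with the same Hodge type, and the weights must pair up by purity. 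Computing $\wedge^2$ then gives
\[
\wedge^2\phi_{\pi_v}=\det I(a_1,a_4)\oplus\det I(a_2,a_3)\oplus I(a_1+a_2,a_3+a_4)\oplus I(a_1+a_3,a_2+a_4).
\]
The first two summands are algebraic characters of Hodge type $(\mathsf{w},\mathsf{w})$. The last two summands are irreducible, since $a_1+a_2\neq a_3+a_4$ and $a_1+a_3\neq a_2+a_4$ by regularity. The weight $\mathsf{w}$ therefore occurs twice, so $\Pi$ itself is not regular, and this is the point that needs care.

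\textbf{Splitting between $w_1$ and $w_2$.} Every $3$-dimensional semisimple representation of $W_\RR$ contains at least one $1$-dimensional summand, since its irreducible pieces have dimension $1$ or $2$. By uniqueness of the decomposition into irreducibles, $\phi_{\pi'_{w_1}}$ and $\phi_{\pi'_{w_2}}$ together use exactly the four pieces above: two characters and two irreducible $2$-dimensional pieces. Hence each $\phi_{\pi'_{w_i}}$ is one character plus one $I(\cdot,\cdot)$. Its Hodge--Tate weights are then either $\{a_1+a_2,\mathsf{w},a_3+a_4\}$ or $\{a_1+a_3,\mathsf{w},a_2+a_4\}$. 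Both sets consist of distinct integers, because $a_1+a_2>a_1+a_4=\mathsf{w}>a_3+a_4$ and $a_1+a_3>\mathsf{w}>a_2+a_4$. The restrictions to $\CC^\times$ are products of $z^p\bar z^q$ with integral exponents (after the usual half-integral normalization shift), so each $\pi'_{w}$ is algebraic with regular infinitesimal character. This holds at every infinite place of $K'$, so $\pi'$ is regular algebraic.

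\textbf{Main obstacle.} The main obstacle is the first step: justifying the precise shape of $\phi_{\pi_v}$ for a regular algebraic cuspidal $\pi$ of $\GL_4$ over a real place, including the purity pairing $a_1+a_4=a_2+a_3$ and the exclusion of characters. My first approach is to use Clozel's purity lemma together with the classification of generic cohomological representations of $\GL_4(\RR)$. The splitting argument is robust even without purity, provided no character occurs in $\phi_{\pi_v}$; the regularity check only needs the two $\mathsf{w}$'s to land in different factors, which the parity count already forces.
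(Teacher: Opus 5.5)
Your argument is correct, but it takes a different route from the paper's.

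\textbf{The paper's route.} It argues globally. Since $\pi$ is not self-twisted, $\mathrm{BC}^{K'}_K(\pi)$ is cuspidal, and its exterior square $\pi'\boxplus{}^\gamma\pi'$ is not. The Asgari--Raghuram classification then gives $\mathrm{BC}^{K'}_K(\pi)=\pi_1\boxtimes\pi_2$ and $\pi'=\mathrm{sym}^2(\pi_1)\otimes\omega_2$. The paper then looks only at restrictions to $W_\CC$. It observes that no character occurs more than twice among the $\chi_i\chi_j$, and uses the shape $\{\theta_1^2,\theta_1\theta_2,\theta_2^2\}$ of a symmetric-square parameter to exclude any coincidence. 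This needs nothing beyond distinctness of the $\chi_i$: no purity and no $W_\RR$-structure. In exchange it imports a global classification theorem and the non-self-twist hypothesis.

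\textbf{Your route.} Yours is purely local at the real places. Clozel's purity lemma pins down $\phi_{\pi_v}=I(a_1,a_4)\oplus I(a_2,a_3)$. The parity of one-dimensional constituents of a $3$-dimensional $W_\RR$-parameter then forces each $\pi'_{w_i}$ to receive exactly one of the two weight-$\mathsf{w}$ characters. This avoids Asgari--Raghuram altogether and also gives the Hodge--Tate weights of $\pi'$ explicitly. The price is that you need purity and the full $W_\RR$-parameters rather than their restrictions to $\CC^\times$. That is fine here: Theorem \ref{Kim} is compatible at the archimedean places, and, as in the paper, you can pass through $\mathrm{BC}^{K'}_K(\wedge^2\pi)=\pi'\boxplus{}^\gamma\pi'$ to identify $\phi_{\Pi_v}$ with $\phi_{\pi'_{w_1}}\oplus\phi_{\pi'_{w_2}}$.

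\textbf{Two side remarks to drop.} Neither affects your main argument.
\begin{enumerate}
\item The parenthetical fallback to infinitesimal characters would not suffice. Your parity count uses the decomposition into $W_\RR$-irreducibles, or at least the conjugation-stable restriction to $\CC^\times$, and the infinitesimal character does not record this. The fallback is unnecessary anyway.
\item The splitting argument is not robust without purity. Purity is exactly what makes $I(a_1+a_2,a_3+a_4)$ and $I(a_1+a_3,a_2+a_4)$ irreducible. For a non-pure parameter $I(p_1,q_1)\oplus I(p_2,q_2)$ with $p_1+p_2=q_1+q_2$, the tensor product contains two characters with the same restriction to $\CC^\times$. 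A $3$-dimensional piece containing both would then fail to be regular.
\end{enumerate}
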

\begin{proof}
    Let $v|\infty$ be an archimedean place of $K$ and $\pi_v$ be the $(\mathfrak{gl}_4,K)$-module attached to $\pi$ at that place. By archimedean local Langlands, there exists a representation of $W_{\RR} = \CC^\times \cup j\CC^\times$ associated with $\pi_v$. The restriction of this representation to $\CC^\times$ decomposes into the sum of characters, so it is of the form
\[
\rho_{\pi_\infty}|_{W_\CC} = \chi_1 \oplus \chi_2 \oplus\chi_3 \oplus \chi_4.
\]
Each $\chi_i$ can be written in the form $z \mapsto |z|^{\frac{-3}{2}} (z^{p_i}\overline{z}^{q_i})$. Since $\pi$ is algebraic, $p_i,q_i$ are integers, and since it is regular, the four characters $\chi_i$ are distinct. 
\\
Now, note that $\wedge^2 \pi = \mathrm{Ind}^{K'}_K (\pi')$. Therefore
\[
\wedge^2(\mathrm{BC}^{K'}_K(\pi)) = \mathrm{BC}^{K'}_K(\wedge^2 \pi) = \pi' \boxplus {}^\gamma\pi',
\]
where $\gamma$ is the non-trivial element of $\mathrm{Gal}(K'/K)$.  Since $\pi$ is not self-twisted, $\mathrm{BC}^{K'}_K(\pi)$ is itself cuspidal. But we see that its $\wedge^2$ is not, so we can apply the classification in \cite{asgari2007cuspidality}. By Theorem 1.1 (ii) of \cite{asgari2007cuspidality}, an isobaric decomposition of $\wedge^2\mathrm{BC}^{K'}_K(\pi)$ into two  automorphic representations of $\mathrm{GL}_3$ can only happen if $\mathrm{BC}^{K'}_K(\pi) = \pi_1 \boxtimes \pi_2$ and
\[
 \pi' \boxplus {}^\gamma\pi' = \wedge^2(\mathrm{BC}^{K'}_K(\pi)) = (\mathrm{sym}^2(\pi_1) \otimes \omega_2) \boxplus (\mathrm{sym}^2(\pi_2) \otimes \omega_1),
\]
where $\omega_i$ is the central character of $\pi_i$. 
Therefore, we can assume $\pi' = \mathrm{sym}^2(\pi_1) \otimes \omega_2$ without loss of generality. 
\\
Now, $\pi'$ being an isobaric summand of the algebraic automorphic representation $\mathrm{BC}^{K'}_K(\wedge^2 \pi)$, is itself algebraic. Fix an infinite place $v'|\infty$ of $K'$ above $v$ (in fact a real embedding since $K'$ is totally real). Then there is a representation of $W_\RR$ associated to $\pi'$ at $v'$, and its restriction to $W_\CC=\CC^\times$ is given as the sum of three characters as follows:
$$\rho_{\pi'_v}|_{W_\CC}=\eta_1 \oplus \eta_2 \oplus\eta_3.$$
Similarly, the $L$-parameter of $\mathrm{BC}^{K'}_K(\wedge^2 \pi)$ at $v'$ is given by the six characters $\{ \chi_i \chi_j \}_{i<j}$. Since $\chi_i$'s are distinct, at most two of these six characters are equal (and in fact two will be the same because of purity, but we do not use that). Now, since $\pi' \boxplus {}^\gamma\pi' = \wedge^2(\mathrm{BC}^{K'}_K(\pi))$, this implies that at most two of $\eta_i$'s are equal. On the other hand we have $$\pi' = \mathrm{sym}^2(\pi_1) \otimes \omega_2.$$ 
The $L$-parameter of $(\pi_i)_{v'}$ (restricted to $W_\CC$) is given by two characters $\theta_1, \theta_2$. If $\theta_1=\theta_2$, then the $L$-parameter of $\mathrm{sym}^2(\pi_1)_{v'}$ is given by three equal characters which cannot happen since at most two of $\eta_i$'s are equal. Therefore, $\theta_1 \neq \theta_2$ and hence $\theta_1^2, \theta_1\theta_2, \theta_2^2$ are distinct (note that these characters are distinct if and only if their weights $(p_i,q_i)$ are distinct). This implies that $\eta_i$'s are distinct and $\pi'$ is regular. 
\end{proof}

\begin{prop}\label{not22}
    $\rho=\rho_{\pi,\fp}$ does not decompose into the direct sum of two $2$-dimensional representations. 
\end{prop}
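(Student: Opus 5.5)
We argue by contradiction: suppose $\rho=\sigma\oplus\tau$ with $\dim\sigma=\dim\tau=2$. By Proposition \ref{isInduction} there is a quadratic extension $K'/K$ and a cuspidal $\pi'$ of $\GL_3(\AA_{K'})$ with $\wedge^2\pi=\mathrm{Ind}^{K'}_K(\pi')$. By Lemma \ref{isTotReal} $K'$ is totally real, and by Lemma \ref{isRegular} $\pi'$ is regular algebraic. The proof of Lemma \ref{isRegular} also gives $\mathrm{BC}^{K'}_K(\pi)=\pi_1\boxtimes\pi_2$ and, say, $\pi'=\mathrm{sym}^2(\pi_1)\otimes\omega_2$. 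The plan is to compare the Galois side with this automorphic structure and reach a contradiction with the assumption that $\pi$ is neither essentially self-dual nor a self-twist.

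\textbf{Step 1: matching the Galois pieces.} Since $\pi'$ is RAC over a totally real field, it has a compatible family $\rho_{\pi',\fp}$. The family is irreducible by the $\GL_3$ result of Böckle–Hui \cite{bockle2025weak} in the non-self-dual case, and by the known self-dual results otherwise. Restricting (\ref{eq114}) to $\Gamma_{K'}$ gives
\[
\rho_{\pi',\fp}\oplus{}^\gamma\rho_{\pi',\fp}\simeq \det\sigma|_{K'}\oplus\det\tau|_{K'}\oplus\beta|_{K'}.
\]
By irreducibility of the two $3$-dimensional summands and Brauer–Nesbitt, a $3$-dimensional irreducible cannot be assembled from a character and part of $\beta$ unless $\beta|_{K'}$ contains a $2$-dimensional constituent. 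Counting dimensions forces $\rho_{\pi',\fp}\simeq\det\sigma|_{K'}\oplus(\text{2-dim piece of }\beta)$ or a similar splitting. Either way this contradicts the irreducibility of $\rho_{\pi',\fp}$, because each side must decompose compatibly. Concretely, each $3$-dimensional irreducible summand on the left must appear as a summand on the right. But the right side has $1$-dimensional summands $\det\sigma|_{K'}$ and $\det\tau|_{K'}$, and the left side, being a sum of two $3$-dimensional irreducibles, has none. This gives the contradiction.

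\textbf{Step 2: the remaining case.} This argument needs $\rho_{\pi',\fp}$ to be irreducible for the specific $\fp$ under consideration, not merely for a density one set. If the $\GL_3$ irreducibility result is available only at all $\fp$ in the non-self-dual case, I treat the case that $\pi'$ is essentially self-dual separately.
\begin{itemize}
\item If $\pi'=\mathrm{sym}^2(\pi_1)\otimes\omega_2$ with $\pi_1$ non-dihedral, then $\rho_{\pi',\fp}=\mathrm{sym}^2\rho_{\pi_1,\fp}\otimes\omega_2$. This is irreducible by Lemma \ref{GL(2)}, since $\mathrm{sym}^2$ of a representation with Lie algebra containing $\mathfrak{sl}_2$ is irreducible.
\item If $\pi_1$ is dihedral, then $\mathrm{sym}^2\pi_1$ is not cuspidal, contradicting cuspidality of $\pi'$.
\end{itemize}
So irreducibility of $\rho_{\pi',\fp}$ holds for every $\fp$ in all cases, and Step 1 applies.

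\textbf{Main obstacle.} The delicate point is justifying that $\rho_{\Pi}|_{K'}$ really equals $\rho_{\pi',\fp}\oplus{}^\gamma\rho_{\pi',\fp}$ as semisimple representations. This follows from Chebotarev and Brauer–Nesbitt, using agreement of Frobenius traces via $\mathrm{BC}^{K'}_K(\wedge^2\pi)=\pi'\boxplus{}^\gamma\pi'$. The other delicate point is ensuring the irreducibility input holds at every $\fp$; Step 2 addresses this via the $\mathrm{sym}^2$ description from Lemma \ref{isRegular}.
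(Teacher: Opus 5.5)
Your Step 1 is the paper's proof. You restrict $\wedge^2\rho_{\pi,\fp}=\det\sigma\oplus\det\tau\oplus(\sigma\otimes\tau)$ to $\Gamma_{K'}$, identify it with $\rho_{\pi',\fp}\oplus{}^\gamma\rho_{\pi',\fp}$ via $\mathrm{BC}^{K'}_K(\wedge^2\pi)=\pi'\boxplus{}^\gamma\pi'$, and observe that the one-dimensional constituents $\det\sigma|_{\Gamma_{K'}}$ and $\det\tau|_{\Gamma_{K'}}$ cannot occur in a sum of two irreducible $3$-dimensional representations. This is best phrased as a comparison of Jordan--H\"older constituents rather than your intermediate dimension count, and irreducibility of $\rho_{\pi',\fp}$ at every $\fp$ is quoted from \cite[Theorem 1.2]{bockle2025weak}.

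Your Step 2 is therefore unnecessary, and as written it is the weakest part. Lemma \ref{isRegular} only shows that $\pi'$ is regular algebraic, while $\pi_1$ need not be $L$-algebraic (compare the mixed-parity remark in Proposition \ref{primitiveOrthogonal}). So writing $\rho_{\pi',\fp}=\mathrm{sym}^2\rho_{\pi_1,\fp}\otimes\omega_2$ and applying Lemma \ref{GL(2)} to $\pi_1$ would need extra justification.
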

\begin{proof}
By Proposition \ref{isInduction}, we have 
\[
\wedge^2 \pi = \mathrm{Ind}^{K'}_K(\pi')
\]
for a cuspidal automorphic representation $\pi'$ of $\GL_3(\AA_{K'})$. We also know that $K'$ is totally real by Lemma \ref{isTotReal} and $\pi'$ is regular algebraic by Lemma \ref{isRegular}. In particular, there is a compatible family of Galois representations attached to it and since it is cuspidal, by \cite[Theorem 1.2]{bockle2025weak}, these Galois representations are irreducible. This gives 
\[
\rho_{\pi',\fp} \oplus {}^\gamma \rho_{\pi',\fp} = \wedge^2 \rho_{\pi,\fp}|_{G_{K'}} = \det (\sigma)|_{G_{K'}} \oplus \det (\tau)|_{G_{K'}} \oplus (\sigma \otimes \tau)|_{G_{K'}},
\]
which is clearly a contradiction to the irreducibility of the 3-dimensional representation $\rho_{\pi',\fp}$. This completes the proof.    
\end{proof}
Putting everything together, we have shown Theorem A:
\begin{cor}\label{mainIrr}
    Let $K$ be a totally real field and $\pi$ be a regular algebraic cuspidal automorphic representation of $\GL_4(\AA_K)$ that is not essentially self-dual. Then for each embedding $\lambda:\QQ(\pi) \hookrightarrow \overline{\QQ_p}$, the Galois representation 
    \[
    \rho_{\pi,\lambda}:\Gamma_K \rightarrow \GL_4(\overline{\QQ_p})
    \]
    attached to $\pi$ is irreducible.
\end{cor}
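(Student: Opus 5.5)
The proof is a case analysis that assembles the results of this section. Fix an embedding $\lambda:\QQ(\pi)\hookrightarrow\overline{\QQ_p}$. The representation $\rho_{\pi,\lambda}$ is semisimple, so if it is reducible we can write $\rho_{\pi,\lambda}=\sigma\oplus\tau$ with $\dim\sigma\geq\dim\tau\geq 1$. Up to relabelling the summands, this leaves only the $(3,1)$ and $(2,2)$ configurations.

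The arguments of Propositions \ref{not31} and \ref{isInduction} were written under two assumptions: $\pi$ is not essentially self-dual, and $\pi$ is not a self-twist. The first step is to dispose of the self-twist case.

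Suppose $\pi\simeq\pi\otimes\chi$ for a nontrivial Hecke character $\chi$.
\begin{itemize}
\item Comparing central characters shows that $\chi^4=1$.
\item Passing to a suitable power of $\chi$, we may assume $\chi$ has prime order. By the standard results of Arthur--Clozel on cyclic base change and automorphic induction, $\pi=\mathrm{Ind}^{L}_{K}(\pi_0)$ for a cyclic extension $L/K$ of degree $2$ or $4$ and a cuspidal $\pi_0$ on $\GL_2(\AA_L)$ or $\GL_1(\AA_L)$.
\item If $L$ is totally real and $\pi_0$ is regular algebraic, then $\rho_{\pi,\lambda}\simeq\mathrm{Ind}(\rho_{\pi_0,\lambda})$. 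Irreducibility follows from Mackey's criterion once we check that $\rho_{\pi_0,\lambda}$ is not isomorphic to its Galois conjugate.
\item That check reduces, via Lemma \ref{charTwist}, to the cuspidality of $\pi$, i.e.\ $\pi_0\not\simeq{}^\gamma\pi_0$.
\item When $L$ is CM, we use the existence of Galois representations for regular algebraic cuspidal representations of $\GL_2$ over CM fields (Allen et al.) together with Lemma \ref{GL(2)}.
\end{itemize}
I expect the delicate points here to be verifying regularity and algebraicity of $\pi_0$ and handling the degree $4$ (Hecke character) case. In that case $\rho_{\pi,\lambda}$ is induced from a character, and irreducibility again follows from Mackey together with distinctness of the conjugates, forced by regularity.

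Once self-twists are excluded, the two reducible configurations are ruled out as follows.
\begin{itemize}
\item In the $(3,1)$-case, $\tau$ is a character, and Proposition \ref{not31} gives a contradiction.
\item In the $(2,2)$-case, Proposition \ref{not22}, which rests on Proposition \ref{isInduction} and Lemmas \ref{isTotReal} and \ref{isRegular}, gives a contradiction.
\end{itemize}
Hence no decomposition exists and $\rho_{\pi,\lambda}$ is irreducible.

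The last point is that those propositions were phrased for the places $\fp$ of $E$. Each embedding $\lambda$ determines such a place, and $\rho_{\pi,\lambda}$ is a conjugate of $\rho_{\pi,\fp}$ after extending scalars to $\overline{\QQ_p}$. The decompositions $\sigma\oplus\tau$ used above were already taken over $\overline{\QQ_p}$, so the propositions apply verbatim to $\rho_{\pi,\lambda}$.

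The main obstacle I anticipate is the self-twist reduction: the earlier propositions genuinely used non-self-twistedness (cuspidality of $\wedge^2\pi$ via Theorem \ref{Asgari}, and cuspidality of $\mathrm{BC}^{K'}_K(\pi)$ in Lemma \ref{isRegular}). If the statement is intended only under that standing hypothesis, this step can be omitted.
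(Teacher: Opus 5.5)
Your proposal is correct and follows the paper's proof. In the self-twist case you write $\pi=\mathrm{Ind}^L_K(f)$ for a quadratic $L/K$ and get irreducibility of $\mathrm{Ind}^L_K(\rho_{f,\lambda})$ from $f\neq{}^\gamma f$; otherwise you rule out the $(3,1)$ and $(2,2)$ splittings by Propositions \ref{not31} and \ref{not22}. Two small corrections: since $\chi^4=1$, a power of $\chi$ is already quadratic, so the degree-$4$ case never arises; and $\rho_{f,\lambda}\not\simeq{}^\gamma\rho_{f,\lambda}$ follows from $f\neq{}^\gamma f$ by Chebotarev and strong multiplicity one, together with irreducibility of $\rho_{f,\lambda}$, not from the character-twist lemma you cite.
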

\begin{proof}
    If $\pi$ is self-twisted, then there exists a degree two extension $L/K$ and a cuspidal automorphic representation $f$ of $\GL_2(\AA_L)$ such that
    \[
    \pi = \mathrm{Ind}^L_K(f).
    \]
Base changing to $L$, we have
\[
\mathrm{BC}^L_K(\pi) = f \boxplus {}^\gamma f,
\]
which implies that $f$ is regular algebraic. Since $L$ is either totally real or CM, this implies that there are Galois representations attached to $f$. Since $\pi$ is cuspidal, we must have $f \neq {}^\gamma f$, hence, the same is true for the associated Galois representations and therefore 
\[
\rho_{\pi,\lambda} = \mathrm{Ind}^L_K(\rho_{f,\lambda})
\]
is irreducible. If $\pi$ is not self-twisted, the statement follows from Proposition \ref{not31} and Proposition \ref{not22}, since $\rho_{\pi,\lambda}$ is semisimple by definition. 
\end{proof}

\subsection{Local-Global compatibility at $\ell=p$}
We keep the notations from the last section. In particular, $\pi$ is neither essentially self-dual nor self-twisted. Now that we have shown irreducibility of $\rho_{\pi,\lambda}$, an argument similar to \cite[Theorem 3.7]{bockle2025weak} can be used to show that these Galois representations satisfy the expected $p$-adic Hodge theoretic properties, at least for a density one set of primes. In fact, these properties are shown in \cite{a2024rigidity} under some mild conditions. We prove that these conditions are satisfied for a density one set of primes. 

Let us formulate this in a more general setting.
Only for the next theorem, $K$ could be a totally real or a CM field. 

\begin{theo}\label{localGlobal}
    Let $K$ be a totally real or CM field and $\Pi$ be a regular algebraic cuspidal automorphic representation of $\GL_n(\AA_K)$. Let $E \supseteq \QQ(\pi)$ be a number field and $\{\rho_{\Pi,\fp}:\Gamma_K\rightarrow \GL_n(\overline{E_\fp})\}_{\fp\in \Sigma_{E}}$ be the associated compatible family of Galois representations. Then there exists a density one set of rational primes $\mathcal{P}$ with the following property: 
    For every $p \in \mathcal{P}$ and $\fp|p$, if $\rho_{\Pi,\fp}$ is strongly irreducible, then:
    \begin{enumerate}
        \item $\rho_{\Pi,\fp}$ is de Rham with the expected Hodge-Tate weights.
        \item If $\Pi$ is unramified at $p$, then $\rho_{\Pi,\fp}$ is crystalline.
    \end{enumerate}    
\end{theo}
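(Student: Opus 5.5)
We deduce the statement from the results of \cite{a2024rigidity}, as indicated before the theorem. That paper proves exactly conclusions (1) and (2) for $\rho_{\Pi,\fp}$ when $p$ satisfies some mild auxiliary conditions, together with a condition on the residual representation. Our aim is to show that all of these hold for $p$ in a density one set $\mathcal{P}$, at least for those $\fp|p$ at which $\rho_{\Pi,\fp}$ is strongly irreducible. The model argument is \cite[Theorem 3.7]{bockle2025weak}, and we adapt it as follows.

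The first step handles the conditions that depend only on $p$ and not on the Galois representation. These include: $p$ unramified in $K$; $p$ larger than a bound determined by $n$ and by the (finite) set of Hodge--Tate weights of $\Pi$; and, where needed, $\Pi$ unramified above $p$. Each of these excludes finitely many primes, or at worst a set of density zero. For instance, requiring $p$ to split completely in some auxiliary finite extension would cut down to positive density only, so we must check carefully that the conditions of \cite{a2024rigidity} do not demand this. We would instead use only conditions like ``$p$ is unramified in $K$ and large.'' Intersecting these finitely many conditions leaves a density one set.

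The second step handles the residual hypothesis, which we expect to be an irreducibility or adequacy-type assumption on $\overline{\rho_{\Pi,\fp}}$ restricted to $\Gamma_{K(\zeta_p)}$. Here we use Theorem \ref{residualIrr}. Let $\mathcal{P}'$ be the set of primes $p$ such that $\rho_{\Pi,\fp}$ is strongly irreducible for all $\fp|p$. If $\mathcal{P}'$ has density less than one, we argue differently. Strong irreducibility at a single $\fp$ makes the Lie algebra of the image irreducible. Following Böckle--Hui, the formal character (equivalently, the decomposition type over a finite extension $L$) is independent of $\fp$ in the compatible system. Hence for $p$ in the complement the hypothesis of the theorem fails at all $\fp|p$, and the statement is vacuous there. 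Thus we may assume strong irreducibility at all $\fp$. We then apply Theorem \ref{residualIrr} to the family restricted to $\Gamma_{L}$ for a suitable finite $L$, for instance $L=K(\zeta_N)$ with $N$ chosen to absorb the cyclotomic part. This gives absolute irreducibility of $\overline{\rho_{\Pi,\fp}}|_{\Gamma_L}$ for $p$ in a density one set. Since $K(\zeta_p)/K$ and $L$ are linearly disjoint for large $p$, this should pass to $\Gamma_{K(\zeta_p)}$ by a standard argument: a reducible restriction would force a decomposition over a field ramified only at $p$, which is incompatible for large $p$.

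The main obstacle is making this second step precise. The difficulty is matching the residual condition required in \cite{a2024rigidity}, possibly ``decomposed generic'' or adequacy, with what Theorem \ref{residualIrr} actually supplies, and controlling the passage to $\Gamma_{K(\zeta_p)}$. If adequacy is required, we would try to invoke Guralnick's result that irreducibility in characteristic $p>2n+1$ implies adequacy. Once both steps are in place, applying \cite[Theorem 1.0.6]{a2024rigidity} gives (1) and (2) for every $p\in\mathcal{P}$.
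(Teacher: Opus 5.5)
There is a genuine gap. The residual hypotheses you must verify for \cite[Theorem 1.0.6]{a2024rigidity} are that $\overline{\rho_{\Pi,\fp}}$ is irreducible and \emph{decomposed generic}, and your proposal never proves decomposed genericity. You list it only as a possibility, then aim at Taylor--Wiles-type conditions (irreducibility on $\Gamma_{K(\zeta_p)}$, adequacy via Guralnick), and these do not imply it. Proving decomposed genericity is the real content of the theorem.

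The missing idea is to use \cite[Lemma 7.1.5]{allen2023potential}. It reduces the problem to showing that the normal closure over $\QQ$ of the field cut out by $\mathrm{ad}(\overline{\rho_{\Pi,\fp}})$ does not contain $\zeta_p$. You then prove this for a density one set of $p$ via Larsen's maximality theorem, as follows.
\begin{enumerate}
\item Realize the family over a number field $E$ (Theorem \ref{FieldSplitsAll}). Treat $\prod_{\fp|p}\rho^{\mathrm{ad}}_{\Pi,\fp}$ as a $\QQ_p$-valued compatible family and restrict to a finite $L/K$ with connected monodromy.
\item Apply \cite[Theorem 3.17]{larsen1995maximality}. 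On a density one set of $p$, the preimage of the adjoint image $\Gamma_p^{\mathrm{ad}}$ in $H_p^{\mathrm{sc}}(\QQ_p)$ is hyperspecial, say $\mathcal{H}_p(\ZZ_p)$.
\item Since reduction kernels are pro-$p$, the product of the orders of the cyclic Jordan--H\"older factors of $\mathcal{H}_p(\FF_p)$ not isomorphic to $\ZZ/p\ZZ$ is bounded independently of $p$.
\item Combined with the uniform index bound of \cite[Corollary 2.5]{hui2020maximality}, the same bound (raised to at most $[L:\QQ]$) controls the residual images of the at most $[L:\QQ]$ conjugates of $\Gamma_p^{\mathrm{ad}}$ that govern the normal closure.
\item If $\zeta_p$ lay in that field and $[L(\zeta_p):L]=p-1$, then \cite[Lemma 3.9]{bockle2025weak} would force the composition factors of $\ZZ/(p-1)\ZZ$, whose orders multiply to $p-1$, to appear there. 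This is impossible for $p$ large.
\end{enumerate}
Your heuristic about a decomposition over a field ramified only at $p$ does not address any of this.

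There is also a secondary error in your reduction to Theorem \ref{residualIrr}. You infer that strong irreducibility is independent of $\fp$ because the formal character is, and that inference is false. For example, $\mathrm{SO}_{2m+1}$ on its standard representation and $\mathrm{Sp}_{2m}$ on $\mathrm{std}\oplus\mathbf{1}$ have the same formal character, yet the first is irreducible and the second is not. You do not need this claim. The general form of \cite[Theorem 1.4]{bockle2026on}, of which Theorem \ref{residualIrr} is a special case, gives a density one set $\mathcal{P}'$ on which $\rho_{\Pi,\fp}$ is irreducible if and only if $\overline{\rho_{\Pi,\fp}}$ is. That is exactly what you need, since the statement only concerns $\fp$ at which $\rho_{\Pi,\fp}$ is strongly irreducible. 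One can then take $\mathcal{P}$ to be the sufficiently large primes in $\mathcal{P}'$ intersected with Larsen's set, subject to $[L(\zeta_p):L]=p-1$. No passage to $\Gamma_{K(\zeta_p)}$ and no adequacy argument is required.
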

\begin{proof}
    In order to prove the claimed properties for $\rho_{\Pi,\fp}$, we use \cite[Theorem 1.0.6]{a2024rigidity}. Therefore, we only need to prove that $\overline{\rho_{\Pi,\fp}}$ is irreducible and decomposed generic. By \cite[Theorem 1.4]{bockle2026on}, there exists a density 1 set of primes $\mathcal{P}'$ such that above these primes $\rho_{\Pi,\fp}$ is irreducible if and only if $\overline{\rho_{\Pi,\fp}}$ is irreducible. This means that we only need to find a density 1 subset $\mathcal{P}$ of $\mathcal{P'}$ such that for $p \in \mathcal{P}$ and $\fp|p$, whenever $\overline{\rho_{\Pi,\fp}}$ is irreducible, it is decomposed generic.  
    \\
    The idea is to use the result of Larsen on hyperspecial maximal images in compatible families \cite[Theorem 3.17]{larsen1995maximality}. To use this, we need a compatible family with values in $\GL_n(\QQ_p)$ rather than $\GL_n(\overline{\QQ_p})$. Theorem \ref{FieldSplitsAll} implies that there exists a number field $E'$ that splits the algebraic monodromy group of $\rho_{\Pi,\fp}$ for all $\fp$. Thus, we can descend to a compatible system with values in $\GL_n(E'_{\fp'})$ for $\fp' \in \Sigma_{E'}$. Therefore, without loss of generality, we may assume that $E$ is large enough so that we have a compatible family
    \[
    \{\rho_{\Pi,\fp}:\Gamma_K \rightarrow \GL_n(E_\fp)\}_{\fp\in \Sigma_E}
    \]
    of Galois representations. 
    \\
    The rest of the proof is similar to the one presented in \cite[Theorem 3.7]{bockle2025weak} for the $\GL_3$ case. We include the argument for the sake of completeness. First of all, note that by \cite[Lemma 7.1.5]{allen2023potential}, we only need to show that $M_\fp$, the normal closure of $\overline{K}^{\ker(\mathrm{ad}(\overline{\rho_{\Pi,\fp}}))}$, does not contain a primitive $p$'th root of unity $\zeta_p$. 
    \\
    Now, we can construct a compatible family of Galois representations
    \[
    \rho_p=\prod_{\fp|p} \rho_{\Pi,\fp} :\Gamma_K \rightarrow \prod_{\fp|p}\GL_n(E_\fp) = \GL_n(E \otimes \QQ_p)=(\mathrm{Res}^E_\QQ\GL_n)(\QQ_p)\hookrightarrow \GL_{nd}(\QQ_p),
    \]
    where $d=[E:\QQ]$. Looking at the adjoint Galois representation gives the adjoint compatible family
    \[
    \rho^{\mathrm{ad}}_p=\prod_{\fp|p} \rho^{\mathrm{ad}}_{\Pi,\fp} :\Gamma_K \rightarrow \prod_{\fp|p}\GL_{n^2}(E_\fp) = \GL_{n^2}(E \otimes \QQ_p)\hookrightarrow \GL_{n^2d}(\QQ_p).
    \]
    Choose the finite Galois extension $L/K$ such that $\rho_p|_{\Gamma_L}$ has a connected monodromy group. Let $H_{p}^{\mathrm{ad}}/\QQ_p$ be the algebraic monodromy group of $\rho^{\mathrm{ad}}_p|_{\Gamma_L}$. Since we assumed that $\rho_p$ is strongly irreducible, $H_{p}^{\mathrm{ad}}/\QQ_p$ is in fact the adjoint quotient of the algebraic monodromy group of $\rho_p$. There exists a central isogeny $\alpha_p:H_p^{\mathrm{sc}} \rightarrow H_p^{\mathrm{ad}}$, where $H_p^{\mathrm{sc}}$ is a simply connected semisimple group. By \cite[Theorem 3.17]{larsen1995maximality}, there exists a density 1 set of primes $\mathcal{P}_1$ such that for any $p \in \mathcal{P}_1$, the preimage $\Gamma_p^{\mathrm{sc}}$ of $\Gamma_p^{\mathrm{ad}}:=\rho_p^{\mathrm{ad}}(\Gamma_L)$ in $H_p^{\mathrm{sc}}(\QQ_p)$ is a hyperspecial maximal compact subgroup. Now define
    \[
    \mathcal{P}_2 = \{p \in \mathcal{P}_1: [L(\zeta_p):L]=p-1 \}.
    \]
    The condition is clearly satisfied for large $p$, hence, $\mathcal{P}_2$ is of density 1. We claim that sufficiently large primes in $\mathcal{P}_2$ work for the theorem. 
    Let $p \in \mathcal{P}$
    and $\Omega_p$ be the normal closure of $L_1:=\overline{L}^{\ker(\mathrm{ad}({\rho_{\Pi,\fp}})|_{\Gamma_L})}$. Clearly, if we show that the (infinite) extension $\Omega_p/\QQ$ does not contain $\QQ(\zeta_p)$, we are done. 
    \\
    $L_1$ is Galois over $L$ but not necessarily over $\QQ$. It has at most $[L:\QQ]$ many $\Gamma_\QQ$-conjugates and $\Omega_p$ is the composite of all these conjugates. Let us denote them by 
    \[
    L_1,\dots,L_k.
    \]
    Then each $L_i$ is Galois over $L$ and
    \[
    \mathrm{Gal}(L_i/L)\simeq \Gamma_p^{\mathrm{ad}}.
    \]
    Since $\Omega_p$ is the composite of $L_i$'s, we have an embedding
    \[
    G_p:=\mathrm{Gal}(\Omega_p/\QQ) \hookrightarrow \prod_{i=1}^k \mathrm{Gal}(L_i/L) \simeq \prod_{i=1}^k \Gamma_p^{\mathrm{ad}}.
    \]
    Let us denote the residual image of $\Gamma_p^{\mathrm{ad}}=\rho_p^{\mathrm{ad}}(\Gamma_L)$ in $\GL_{n^2d} (\FF_p)$ by $\overline{\Gamma_p^{\mathrm{ad}}}$ (we can assume that $\rho^{\mathrm{ad}}_p$ has values in $\GL_{n^2d}(\ZZ_p)$ after conjugation). Let $\overline{G_p}$ be the image of $G_p$ in $\prod_1^k \overline{\Gamma_p^{\mathrm{ad}}}$. Since the kernel of $\GL_N(\ZZ_p) \rightarrow \GL_N(\FF_p)$ is a pro-$p$ group, \cite[Lemma 3.9]{bockle2025weak} implies that we have an inclusion of the multi-sets of Jordan-Hölder factors:
    \begin{equation}\label{JH}
        \mathrm{JH}\left(\frac{\ZZ}{(p-1)\ZZ}\right) \subset \mathrm{JH}\left(\prod_i\overline{\Gamma_p^{\mathrm{ad}}}\right)
    \end{equation}
    By \cite[Corollary 2.5]{hui2020maximality}, $\alpha_p(\Gamma_p^{\mathrm{sc}})\subset \Gamma_p^{\mathrm{ad}}$ is normal of finite index and 
    \[
    [\Gamma_p^{\mathrm{ad}}:\alpha_p(\Gamma_p^{\mathrm{sc}})]<C,
    \]
    for a constant $C$ only depending on $E$. Therefore, the same bound holds residually: 
    \[
    [\overline{\Gamma_p^{\mathrm{ad}}}:\overline{\alpha_p(\Gamma_p^{\mathrm{sc}})}]<C.
    \]
    Let $\mathcal{H}_p/\ZZ_p$ be an integral model for the hyperspecial group $\Gamma_p^{\mathrm{sc}}$. Hence 
    \[
    \mathcal{H}_p(\ZZ_p)\simeq\Gamma_p^{\mathrm{sc}}
    \]
    and the generic fiber of $\mathcal{H}_p/\ZZ_p$ is $H_p^{\mathrm{sc}}/\QQ_p$. 
    \\ For any finite group $G$, let $c_p(G)$ denote the product of the orders of the cyclic factors appearing in the multi-set $\mathrm{JH}(G)$ that are not isomorphic to $\ZZ/p\ZZ$. Since the kernel of
    \[
    \mathcal{H}_p(\ZZ_p) \rightarrow \mathcal{H}_p(\FF_p)
    \]
    is pro-$p$, $c_p(\mathcal{H}_p(\FF_p))$ is bounded by some $C'$ independent of $p$. Therefore, the same bound also holds residually. Combining all these together, we get
    \[
    c_p\left(\prod_{i=1}^k \overline{\Gamma_p^{\mathrm{ad}}}\right) \leq (CC')^k \leq (CC')^{[L:\QQ]}. 
    \]
    Now, if we choose $p>1+(CC')^{[L:\QQ]}$, this clearly contradicts the inclusion in (\ref{JH}). This implies the result for sufficiently large $p$ in $\mathcal{P}_2$ and we are done.
\end{proof}

\begin{cor}\label{p-adicHT}
    Let $K$ be a totally real field and $\pi$ be a regular algebraic cuspidal automorphic representation of $\GL_4(\AA_K)$. Then there exists a density one set of rational primes $\mathcal{P}$ with the following property: 
    For every $p \in \mathcal{P}$ and every embedding $\lambda:\QQ(\pi) \rightarrow \overline{\QQ_p}$ one has:
    \begin{enumerate}
        \item $\rho_{\pi,\lambda}$ is de Rham with the expected Hodge-Tate weights.
        \item If $\pi$ is unramified at $p$, then $\rho_{\pi,\lambda}$ is crystalline.
    \end{enumerate}    
\end{cor}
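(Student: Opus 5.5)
The plan is to reduce Corollary \ref{p-adicHT} to Theorem \ref{localGlobal}. Since that theorem needs $\rho_{\pi,\lambda}$ to be strongly irreducible, I will split into cases according to how $\pi$ arises via functoriality. In each case I either check strong irreducibility for all $\lambda$, or reduce to lower-rank pieces for which the $p$-adic Hodge theoretic properties are already known. Let $\mathcal{P}$ be the intersection of finitely many density one sets, one for each family involved; it is still of density one.

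\emph{Case 1: $\pi$ is self-twisted.} Then $\pi=\mathrm{Ind}^L_K(f)$ for a quadratic $L/K$ and a regular algebraic cuspidal $f$ of $\GL_2(\AA_L)$, and $\rho_{\pi,\lambda}=\mathrm{Ind}^L_K(\rho_{f,\lambda})$ (proof of Corollary \ref{mainIrr}). Here $L$ is totally real or CM. The family $\rho_{f,\lambda}$ is very weakly compatible by \cite[Theorem 7.1.10]{allen2023potential}, so it is de Rham/crystalline with the right Hodge--Tate weights for a density one set of $p$. De Rhamness and crystallinity are preserved by induction, and the Hodge--Tate weights of $\mathrm{Ind}$ are the expected ones since $\mathrm{BC}^L_K(\pi)=f\boxplus{}^\gamma f$. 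Alternatively, if $f$ is non-dihedral, Lemma \ref{GL(2)} gives strong irreducibility of $\rho_{f,\lambda}$, and Theorem \ref{localGlobal} over $L$ applies.

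\emph{Case 2: $\pi$ is not essentially self-dual and not self-twisted.} By Corollary \ref{mainIrr}, $\rho_{\pi,\lambda}$ is irreducible for all $\lambda$. Its Zariski-closure identity component $H$ acts through a decomposition into isotypic pieces. If $\rho_{\pi,\lambda}$ is not strongly irreducible, Lemma \ref{GerenalizedCG} applies with $G'$ the preimage of $H$, which has finite index and is normal. It shows that either $\rho_{\pi,\lambda}$ is induced from a proper open subgroup, or the restriction to $\Gamma_{L'}$ for some finite $L'$ is isotypic with multiplicity $>1$.
\begin{itemize}
\item[--] In the induced case from degree $2$ or $4$, I will compare with the automorphic side: via Lemma \ref{charTwist} and the cyclic base-change lemma, $\pi\simeq\pi\otimes\chi$ for a nontrivial $\chi$, contradicting non-self-twistedness. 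This needs the Hodge--Tate regularity only on the density one set, which is harmless.
\item[--] The isotypic case is excluded on the density one set where $\rho_{\pi,\lambda}$ is Hodge--Tate regular (compatibility gives this for a density one set by the weak abelian arguments, as in the proof of Lemma \ref{GL(2)}). There, distinct Hodge--Tate weights forbid a tensor factor of multiplicity $>1$ that has repeated weights.
\end{itemize}
So on that set $\rho_{\pi,\lambda}$ is strongly irreducible and Theorem \ref{localGlobal} applies.

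\emph{Case 3: $\pi$ is essentially self-dual.} Here the needed properties are known from the construction via Shimura varieties: the results cited in \cite{harris2016rigid}, \cite{barnet2014potential} give full local-global compatibility at $p$ for all $p$ in the essentially self-dual, polarizable setting over totally real $K$.

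\textbf{Main obstacle.} The delicate step is the circularity in Case 2. Ruling out non-strong-irreducibility needs Hodge--Tate regularity, which is what we are trying to prove. I would resolve this by a weaker first step. I would run the argument of Theorem \ref{localGlobal} only partially: the Böckle--Hui result (Theorem \ref{BoeckelHuiMain}) applied to weak abelian summands gives Hodge--Tate regularity of the potentially abelian pieces. The remaining non-abelian pieces have semisimple part of Lie algebra $\mathfrak{sl}_2$ or $\mathfrak{sl}_2\times\mathfrak{sl}_2$, and these would be matched automorphically to the cases excluded by Theorem \ref{Asgari}.
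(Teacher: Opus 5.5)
Your case division (self-twisted, essentially self-dual, primitive) is the same as the paper's, and your self-twisted case is the paper's argument via \cite[Theorem 7.1.10]{allen2023potential}. In the essentially self-dual case the right inputs are \cite[Theorem 2.1]{patrikis2015sign}, to know that $\pi$ is polarizable (this is not automatic), and then \cite{barnet2014local}.

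The genuine gap is in the primitive case, at exactly the step you flag. You exclude the branch where $\rho=\rho_{\pi,\lambda}$ is irreducible but isotypic of multiplicity $>1$ on an open subgroup by using Hodge--Tate regularity. But regularity with the expected weights is what the corollary asserts. In Lemma \ref{GL(2)} that regularity comes from \cite[Theorem 7.1.10]{allen2023potential}, a $\GL_2$-specific input, not from weak abelian arguments.

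Your proposed repair does not close this. Theorem \ref{BoeckelHuiMain} gives only local algebraicity of abelian summands. It says nothing about whether their weights are distinct or agree with $\pi_\infty$. The critical case is $\rho|_{\Gamma_L}\simeq\chi^{\oplus 4}$, i.e.\ $\rho$ is a character twist of an Artin representation, possibly with non-solvable projective image, so solvable base change does not help. There B\"ockle--Hui only tells you that $\chi$ is locally algebraic, which is perfectly consistent with four equal weights. Likewise, ``matching automorphically with the cases of Theorem \ref{Asgari}'' has no mechanism behind it: a restriction of the form $W\oplus W$ has repeated weights, so potential modularity of $W$ is unavailable. Separately, an induction from a quartic field $M$ whose Galois closure has group $A_4$ or $S_4$ need not be a self-twist, so your degree-$4$ step is incomplete as stated. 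In that case $M$ has no CM subfield, so the locally algebraic inducing character has parallel weight and $\mathrm{ad}(\rho)$ is again finite.

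The idea you are missing is that Theorem \ref{localGlobal} uses strong irreducibility only to obtain decomposed genericity via Larsen. \cite[Theorem 1.0.6]{a2024rigidity} itself needs only that $\overline{\rho_{\pi,\lambda}}$ be irreducible and decomposed generic. In the Artin-twist case, $\mathrm{ad}(\rho_{\pi,\lambda})$ is an Artin representation, the same for all $\lambda$, so it factors through a fixed $\mathrm{Gal}(M/K)$.
\begin{itemize}
\item \cite[Lemma 7.1.5]{allen2023potential} then gives decomposed genericity for every $p>1+[M:\QQ]$.
\item Theorem \ref{residualIrr} gives residual irreducibility on a density-one set; it applies because every $\rho_{\pi,\lambda}$ is irreducible by Corollary \ref{mainIrr}.
\item \cite[Theorem 1.0.6]{a2024rigidity} then yields the expected, hence distinct, Hodge--Tate weights, contradicting $\chi_1=\dots=\chi_4$.
\end{itemize}
This is how the paper argues. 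Before this, it uses \cite[Proposition 4.5]{calegari2013irreducibility}, together with the assumption that $\pi$ is neither essentially self-dual nor induced, to reduce to the dichotomy strongly irreducible versus potentially abelian.

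If you want to handle the multiplicity-two branch yourself, use self-duality rather than regularity. There $\rho\simeq\rho_1\otimes\rho_2$ (after lifting projective representations), with $\rho_2$ of finite projective image, so $\rho^\vee\simeq\rho\otimes\mu^{-1}$ with $\mu=\det\rho_1\det\rho_2$. Over the field $L$ cut out by $\mathrm{ad}(\rho_2)$, $\mu|_{\Gamma_L}$ is a direct summand of $\wedge^2\rho|_{\Gamma_L}$. Hence $\mu$ is locally algebraic by Theorem \ref{BoeckelHuiMain}, and strong multiplicity one would make $\pi$ essentially self-dual, a contradiction.
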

\begin{proof}
    If $\pi$ is essentially self-dual, it is polarizable by \cite[Theorem 2.1]{patrikis2015sign} and therefore $\rho_{\pi,\lambda}$ satisfies local-global compatibility by \cite{barnet2014local}. If $\pi$ is an induction of a degree two extension $K'/K$, then $K'$ is either totally real or CM and we are done by \cite[Theorem 7.1.10]{allen2023potential}. So, we may assume that $\pi$ is not essentially self-dual and not an induction. Therefore, $\rho_{\pi,\lambda}$ is irreducible by Corollary \ref{mainIrr}. Now,  \cite[Proposition 4.5]{calegari2013irreducibility} implies that either $\rho_{\pi,\lambda}$ is strongly irreducible, in which case Theorem \ref{localGlobal} implies the result, or $\rho_{\pi,\lambda}$ is potentially abelian. Then after a finite extension $L/K$ we have
    \[
    \rho_{\pi,\lambda}|_{\Gamma_L} = \chi_1 \oplus \chi_2 \oplus \chi_3 \oplus \chi_4.
\]
First, assume that not all four characters are equal. Then, we are in the situation of Lemma \ref{GerenalizedCG} with $k \neq 1$. This means that $\rho_{\pi,\lambda}$ and hence $\pi$ is an induction. Therefore, all the four characters must be equal. This implies that $\mathrm{ad}(\rho_{\pi,\lambda})$ has finite image and hence an Artin representation. In particular, the compatible system is independent of $\lambda$ (can be realized with coefficients in $\overline{\QQ}$). Let us assume that it factors through $\mathrm{Gal}(M/K)$ for a number field $M$ that can clearly be assumed to be Galois over $\QQ$. Then for every prime $p>1+[M:\QQ]$ and each $\lambda$ above $p$, $\rho_{\pi,\lambda}$ is decomposed generic by \cite[Lemma 7.1.5]{allen2023potential}. This implies that $\rho_{\pi,\lambda}$ is de Rham of the expected Hodge-Tate weights and in particular regular. This contradicts equality of the characters $\chi_1,\dots,\chi_4$.
\end{proof}

\section{Lie Algebra Classification}\label{LieClassification}
Let $K$ be a totally real field as usual and $\pi$ be a regular algebraic cuspidal automorphic representation of $\GL_4(\AA_K)$. In this section, we give an automorphic classification for the Lie algebras $\mathfrak{g}\subseteq \mathfrak{gl}_4$ (over $\overline{\QQ_p}$) that occur as the Lie algebra of the image of the $p$-adic Galois representations attached to $\pi$. Since the determinant of such a representation is given by the central character of $\pi$, we focus on classifying $\mathfrak{g}^{\mathrm{ss}}$. Almost everything in this section can be extracted from the literature (mostly from \cite{asgari2007cuspidality} on the automorphic side and from \cite{calegari2013irreducibility} on the Lie algebra side). Nevertheless, we think it is useful to gather everything in one place.\\
If $\pi$ is essentially self-dual, then $\wedge^2 \pi$ is not cuspidal by Theorem \ref{Asgari}. By the same proposition, $\pi$ is either an Asai transfer, an automorphic tensor product, or a generic transfer from $\mathrm{GSp_4}$. In the latter case, $\pi$ is either a symmetric cube transfer from $\GL_2$ or not. If it is not in the image of $\mathrm{sym}^3$, we say that $\pi$ is of \textbf{primitive $\mathrm{GSp}_4$ type} (or primitive symplectic type). We will see that in this case, the semisimple part of the Lie algebra of the image should be $\mathfrak{sp}_4$, which justifies the name (but we can only prove this over $\QQ$). \\
We say that $\pi$ is of \textbf{primitive $\mathrm{GL}_4$ type} (or simply primitive type) if $\pi$ is neither essentially self-dual nor self-twisted. We will see that in this case, the semisimple part of the Lie algebra of the image should be $\mathfrak{sl}_4$, which justifies the name.
If $\wedge^2 \pi$ is cuspidal, then by Theorem \ref{Asgari} $\pi$ is either an Asai transfer of a non-dihedral automorphic representation of $\GL_2$, or of primitive $\mathrm{GL}_4$ type. \\
Let $\mathfrak{g}_\lambda$ be the $\overline{\QQ_p}$-Lie algebra of $\rho_{\pi,\lambda}(\Gamma_K)\subset \GL_4(\overline{\QQ_p})$. If $\pi$ is an induction or an Asai transfer of a dihedral representation, then $\rho_{\pi,\lambda}$ is potentially abelian and $\mathfrak{g}_\lambda^{ss}=0$. If $\pi$ is an induction, an Asai transfer, or a symmetric cube of a non-dihedral representation, then this representation is regular algebraic (possibly after a twist) and $\mathfrak{g}_\lambda^{ss}$ is isomorphic to $\mathfrak{sl}_2$ by Lemma \ref{GL(2)}. In these cases, since everything reduces to $\GL_1$ and $\GL_2$, computing the image of the Galois representation is also straightforward. The non-trivial cases are when $\pi=\pi_1 \boxtimes \pi_2$ is an automorphic tensor, when $\pi$ is of primitive $\mathrm{GSp}_4$ type, and when it is of primitive $\GL_4$ type. 
\\
Let $\pi=\pi_1 \boxtimes \pi_2$ for cuspidal automorphic representations $\pi_1$ and $\pi_2$ of $\GL_2(\AA_K)$. If $\pi_1=\mathrm{Ind}^{K'}_K \chi$ for a Hecke character $\chi$ over a degree two extension $K'/K$, then
\[
\pi=\mathrm{Ind}^{K'}_K \chi \boxtimes\pi_2 = \mathrm{Ind}^{K'}_K (\chi\otimes \mathrm{BC}^{K'}_K \pi_2)
\]
and we are back to the induction case. So we assume that both $\pi_1$ and $\pi_2$ are non-dihedral.
This case has been worked out by Patrikis \cite[\S 2.6]{patrikis2019variations}. 
\begin{prop}\label{primitiveOrthogonal}
    Let $\pi$ be a RAC automorphic representation of $\GL_4(\AA_K)$. Assume $\pi=\pi_1 \boxtimes \pi_2$ for non-dihedral cuspidal automorphic representations $\pi_1$ and $\pi_2$ of $\GL_2(\AA_K)$. Then for every $\lambda$, one has $\mathfrak{g}_\lambda^{ss}=\mathfrak{so}_4 \subset \mathfrak{sl}_4$. 
\end{prop}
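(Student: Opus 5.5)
The plan is to reduce to the Galois representations of $\pi_1$ and $\pi_2$ and to use Goursat's lemma for Lie algebras. First, the Galois side has to match the automorphic side. Since $\pi=\pi_1\boxtimes\pi_2$ is regular algebraic, I check at the archimedean places (as in the proof of Lemma \ref{isRegular}) that after twisting by suitable algebraic Hecke characters (possibly of a half-integral-weight correction, absorbed into twists $\pi_1\otimes\psi$, $\pi_2\otimes\psi^{-1}$) both $\pi_i$ may be taken regular algebraic. If the $L$-parameter of $\pi_{i,v}$ restricted to $W_\CC$ had two equal characters, the tensor product would have repeated characters, contradicting the regularity of $\pi$. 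So Galois representations $\rho_{\pi_i,\lambda}$ exist (after enlarging the coefficient field). Comparing Frobenius traces via Chebotarev and Brauer--Nesbitt then gives $\rho_{\pi,\lambda}\simeq\rho_{\pi_1,\lambda}\otimes\rho_{\pi_2,\lambda}$.

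Next, by Lemma \ref{GL(2)} each $\rho_{\pi_i,\lambda}$ is strongly irreducible with $\mathfrak{g}_i^{ss}=\mathfrak{sl}_2$. Let $\mathfrak{h}$ be the semisimple part of the Lie algebra of the image of $\rho_{\pi_1,\lambda}\oplus\rho_{\pi_2,\lambda}$. Then $\mathfrak{h}\subseteq\mathfrak{sl}_2\times\mathfrak{sl}_2$ surjects onto each factor. By Goursat's lemma (the factors are simple), $\mathfrak{h}$ is either all of $\mathfrak{sl}_2\times\mathfrak{sl}_2$ or the graph of an automorphism of $\mathfrak{sl}_2$, which is necessarily inner. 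In the first case, the image of $\mathfrak{sl}_2\times\mathfrak{sl}_2$ under the tensor product representation is $\mathfrak{so}_4\subset\mathfrak{sl}_4$, and $\mathfrak{g}_\lambda^{ss}$ is exactly this image. So the proposition reduces to excluding the second case.

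The main obstacle is excluding the graph case. If $\mathfrak{h}$ is a graph, then on an open subgroup $\Gamma_L$ (with $L/K$ finite) the two projective representations $\mathrm{ad}\rho_{\pi_1,\lambda}$ and $\mathrm{ad}\rho_{\pi_2,\lambda}$ become conjugate. Enlarging $L$ to be Galois and applying Lemma \ref{charTwist} to $\rho_{\pi_1,\lambda}$ and $\rho_{\pi_2,\lambda}|_{\Gamma_L}$ twisted appropriately, I would get $\rho_{\pi_1,\lambda}\simeq\rho_{\pi_2,\lambda}\otimes\chi$ on $\Gamma_L$ for some character $\chi$. The first thing I would try is to lift this to $\Gamma_K$: the restrictions to $\Gamma_L$ are absolutely irreducible with equal projectivizations, so the standard argument shows that $\rho_{\pi_1,\lambda}\simeq\rho_{\pi_2,\lambda}\otimes\chi\otimes\alpha$ over $K$ for a character $\chi$ and an Artin twist. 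Since this character is a ratio of determinant-type characters, it is locally algebraic by Theorem \ref{BoeckelHuiMain}, hence comes from a Hecke character; then $\pi_1\simeq\pi_2\otimes\chi'$ automorphically by strong multiplicity one. In that case
\[
\pi=\pi_1\boxtimes\pi_2\simeq(\pi_2\boxtimes\pi_2)\otimes\chi'=(\mathrm{sym}^2\pi_2\otimes\chi')\boxplus(\omega_{\pi_2}\chi'),
\]
which is not cuspidal, a contradiction. The subtle point is the step from $\Gamma_L$ back to $\Gamma_K$: a priori one only gets $\rho_{\pi_1}\otimes\rho_{\pi_2}^\vee\supset$ a finite-image (Artin) piece $\alpha$ of dimension possibly $>1$. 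I would handle this using the decomposition $\rho_{\pi_1}\otimes\rho_{\pi_2}^\vee|_{\Gamma_L}\supset\chi$ and Lemma \ref{GerenalizedCG}. This shows that $\rho_{\pi_1}\otimes\rho_{\pi_2}^\vee\otimes\chi^{-1}$ contains an Artin summand over $K$, which forces $\rho_{\pi}$ to be reducible or an induction. Either outcome contradicts the cuspidality of $\pi$, or returns us to the excluded dihedral/induction cases.
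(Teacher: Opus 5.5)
Your first step fails over a general totally real $K$, and the rest of the proposal depends on it. Regular algebraicity of $\pi_1\boxtimes\pi_2$ does not let you twist $\pi_1,\pi_2$ into regular algebraic representations. It only forces them to be $W$-algebraic Hilbert modular forms with \emph{compatible} parities. Each $\pi_i$ may have weights $k_{i,\tau}$ of mixed parity, so its archimedean exponents are half-integral at some embeddings and integral at others; this half-integrality cancels in the tensor product. By the unit theorem, every Hecke character of a totally real field has parallel real infinity type. Any twist, in particular your $|\cdot|^{1/2}$-correction, therefore shifts all embeddings at once and cannot repair such a pattern.

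Consequently there are in general no Galois representations attached to $\pi_1,\pi_2$ over $K$. Lemma \ref{GL(2)} does not apply to them, and neither the identification $\rho_{\pi,\lambda}\simeq\rho_{\pi_1,\lambda}\otimes\rho_{\pi_2,\lambda}$ nor your later appeal to strong multiplicity one is available. Over $\QQ$, or when the $\pi_i$ are paritious, your step is fine. The paper handles exactly this point by citing \cite{patrikis2019variations}. Proposition 2.5.8 there gives $W$-algebraicity. Proposition 2.6.7 gives a decomposition $\rho_{\pi,\lambda}\simeq\rho_1\otimes\rho_2$ with two-dimensional $\rho_i$ that are attached to the $\pi_i$ only after a quadratic base change, and it also gives Lie-irreducibility of $\rho_{\pi,\lambda}$. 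With Lie-irreducibility the rest is immediate. Each $\rho_i$ is Lie-irreducible, hence has Lie algebra $\mathfrak{sl}_2$. The graph case of Goursat cannot occur, since it would make $\rho_1\otimes\rho_2$, on an open subgroup, a twist of $\rho_2^{\otimes 2}\simeq\mathrm{sym}^2\rho_2\oplus\det\rho_2$.

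Your treatment of the graph case is also not correct as written. The character $\chi$ lives only on $\Gamma_L$, so ``$\otimes\chi^{-1}$ over $K$'' has no meaning. Likewise, ``$\rho_\pi$ reducible, contradicting cuspidality'' presupposes the very irreducibility statement that is at issue. Here is a correct version. Over $\Gamma_L$ one has $\rho_1\otimes\rho_2^\vee\simeq\chi\oplus(\chi\otimes\mathrm{ad}^0\rho_2)$, with the second summand irreducible. Isotypic components of dimensions $1$ and $3$ cannot be permuted transitively, so by the proof of Lemma \ref{GerenalizedCG} the semisimple $\Gamma_K$-representation $\rho_1\otimes\rho_2^\vee$ is reducible. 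It therefore has a one-dimensional summand $\psi$ with $\psi|_{\Gamma_L}=\chi$, which gives $\rho_1\simeq\rho_2\otimes\psi$ over $K$. Then $\rho_{\pi,\lambda}\simeq(\mathrm{sym}^2\rho_2\otimes\psi)\oplus\det(\rho_2)\psi$ has a one-dimensional summand. The $L$-function argument of Proposition \ref{not31} rules this out for any cuspidal $\pi$, since that argument never uses non-self-duality. This repair does not need the $\rho_i$ to be attached to the $\pi_i$. It still needs a tensor decomposition of $\rho_{\pi,\lambda}$ and strong irreducibility of the factors, which is Patrikis's input in the mixed-parity case.
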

\begin{proof}
    By \cite[Proposition 2.5.8]{patrikis2019variations}, $\pi_1$ and $\pi_2$ can be assumed to be $W$-algebraic. Since $\pi$ is regular, it is a discrete series representation at the archimedean places. Therefore, $\pi_1$ and $\pi_2$ are Hilbert modular forms, possibly of mixed parity, whose parities are compatible. Then \cite[Proposition 2.6.7]{patrikis2019variations} implies that $\rho_{\pi,\lambda}$ is Lie-irreducible and we have a decomposition $\rho_{\pi,\lambda}=\rho_1\otimes \rho_2$ for two-dimensional representations $\rho_1$ and $\rho_2$. Note that $\pi_1$ and $\pi_2$ are not $L$-algebraic, so $\rho_1$ and $\rho_2$ are not the Galois representations attached to these automorphic representations (but they are so after a degree two base-change). \\
    Now both $\rho_1$ and $\rho_2$ have to be Lie-irreducible since $\rho_{\pi,\lambda}$ is so. Therefore, both have Lie algebra $\mathfrak{sl}_2$. This tells us that the Lie algebra of $\rho_1\otimes\rho_2$ must be $\mathfrak{so}_4\simeq \mathfrak{sl}_2 \times \mathfrak{sl}_2$. Note that the isomorphism is given by $\otimes$, so $\mathfrak{sl}_2 \times \mathfrak{sl}_2 \simeq \mathfrak{so}_4$ acts irreducibly. 
\end{proof}

\begin{prop}\label{primitiveSymplectic}
    Let $\pi$ be a RAC automorphic representation of $\GL_4(\AA_\QQ)$ that is of primitive $\mathrm{GSp}_4$ type. Then, for all but finitely many $\lambda$, one has $\mathfrak{g}_\lambda^{ss}=\mathfrak{sp}_4$.
\end{prop}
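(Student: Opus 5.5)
Since $\pi$ is of primitive $\mathrm{GSp}_4$ type, it is essentially self-dual and in fact polarizable (\cite[Theorem 2.1]{patrikis2015sign}), and it is the transfer of a generic cuspidal representation of $\mathrm{GSp}_4(\AA_\QQ)$. We use the known theory of Galois representations attached to such forms. The first step is to place the image in $\mathrm{GSp}_4(\overline{\QQ_p})$. The representations $\rho_{\pi,\lambda}$ preserve a nondegenerate form up to a similitude character, and the sign of this form is symplectic (Bellaïche--Chenevier type sign results). This gives $\mathfrak{g}_\lambda^{ss}\subseteq\mathfrak{sp}_4$.

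Next we make sure the representation is irreducible with the expected Hodge--Tate weights. Over $\QQ$, irreducibility of $\rho_{\pi,\lambda}$ for all but finitely many $\lambda$ follows from the results on irreducibility for $\mathrm{GSp}_4$ (Ramakrishnan, Calegari--Gee, and the regular-weight work of Weiss/Dieulefait). Local-global compatibility at $p$ is available by \cite{barnet2014local}. Hence for $p$ large, $\rho_{\pi,\lambda}$ is crystalline with four distinct Hodge--Tate weights $\{0,a,b,a+b\}$ up to twist, so the representation is regular.

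Then we classify the possible semisimple Lie subalgebras $\mathfrak{h}$ of $\mathfrak{sp}_4$, following \cite[Proposition 4.5]{calegari2013irreducibility}. Let $\mathfrak{h}=\mathfrak{g}^{ss}_\lambda$ and consider its tautological $4$-dimensional representation. The cases, and how each is excluded, are as follows.
\begin{enumerate}
\item[(i)] $\mathfrak{h}=0$, so $\rho_{\pi,\lambda}$ is potentially abelian. By the Corollary after Lemma \ref{GerenalizedCG}, $\rho_{\pi,\lambda}$ is induced from a degree-$4$ extension, or $\pi$ is an induction. Either way this contradicts $\wedge^2\pi$ having the shape forced by primitivity via Theorem \ref{Asgari}, or, as in Corollary \ref{p-adicHT}, the scalar case contradicts regularity.
\item[(ii)] $\mathfrak{h}$ is reducible on $\overline{\QQ_p}^4$, for instance $\mathfrak{sl}_2$ acting as $2\oplus 2$, or $\mathfrak{sl}_2\times\mathfrak{sl}_2$ as $\mathfrak{sp}_2\times\mathfrak{sp}_2$. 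Then by Lemma \ref{GerenalizedCG} $\rho_{\pi,\lambda}$ is an induction from a quadratic field of a $2$-dimensional representation. Hence $\pi$ is self-twisted, which is excluded: $\pi$ is then an induction rather than primitive.
\item[(iii)] $\mathfrak{h}=\mathfrak{sl}_2$ acting by $\mathrm{Sym}^3$. Then, after twisting, $\rho_{\pi,\lambda}$ is $\mathrm{Sym}^3$ of a $2$-dimensional representation up to a finite twist. We must show this forces $\pi$ to be a symmetric cube, contradicting primitivity. This is done by checking that the $2$-dimensional representation is modular.
\item[(iv)] $\mathfrak{h}=\mathfrak{sl}_2\times\mathfrak{sl}_2\cong\mathfrak{so}_4$ acting by $2\otimes 2$. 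This does not preserve a symplectic form on an irreducible $4$-dimensional space. So it cannot occur inside $\mathfrak{sp}_4$ irreducibly.
\end{enumerate}
The only remaining possibility is $\mathfrak{h}=\mathfrak{sp}_4$.

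The main obstacle is case (iii). We must descend from the Lie algebra statement to an actual representation: up to a finite-order twist after base change, $\rho_{\pi,\lambda}|_{\Gamma_L}\simeq \mathrm{Sym}^3 r\otimes\psi$. We then need $r$ to be modular, so that $\pi$ is a symmetric cube over $\QQ$. Here we use that over $\QQ$ and for large $p$, $r$ (after descending to $\Gamma_\QQ$ by Lemma \ref{charTwist}-type twisting) is odd, irreducible and crystalline with distinct weights and large residual image. Serre's conjecture and modularity lifting (Khare--Wintenberger, Kisin) then give a classical modular form $f$ with $\rho_{\pi,\lambda}\simeq\mathrm{Sym}^3\rho_f\otimes\chi$. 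Strong multiplicity one then gives $\pi=\mathrm{sym}^3(f)\otimes\chi$, a contradiction. This reliance on modularity over $\QQ$ and on excluding finitely many $p$ is why the statement is restricted to $\QQ$ and to almost all $\lambda$.
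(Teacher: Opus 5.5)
Your overall route is the paper's. You show irreducibility for almost all $\lambda$, then get Lie-irreducibility because a Clifford decomposition would make $\pi$ self-twisted. The symplectic form then leaves only $\mathfrak{sp}_4$ and $\mathrm{sym}^3(\mathfrak{sl}_2)$, and modularity over $\QQ$ excludes the latter.

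The step that fails as written is the descent in (iii). Lemma~\ref{charTwist} takes as input two representations \emph{of $\Gamma_\QQ$} that agree on $\Gamma_L$. It cannot extend a representation $r$ of $\Gamma_L$ to $\Gamma_\QQ$, and producing $r$ on all of $\Gamma_\QQ$ is exactly what you need before applying Serre's conjecture or modularity lifting over $\QQ$. The missing observation, which the paper uses, is that the whole monodromy group $G_\lambda$, not only $G_\lambda^\circ$, lies in $\mathrm{sym}^3(\GL_2)$. Indeed, $G_\lambda$ normalizes $(G_\lambda^\circ)^{\mathrm{der}}=\mathrm{sym}^3(\SL_2)$. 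Since $\SL_2$ has no outer automorphisms and the centralizer of this irreducible subgroup is the scalars, its normalizer in $\GL_4$ is $\GG_m\cdot\mathrm{sym}^3(\SL_2)=\mathrm{sym}^3(\GL_2)$. So the projectivization of $\rho_{\pi,\lambda}$ factors through $\mathrm{PGL}_2$, and lifting it (Tate) gives $r$ on $\Gamma_\QQ$ with $\rho_{\pi,\lambda}\simeq\mathrm{Sym}^3 r\otimes\chi$. No base change is needed.

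Second, you assert that $r$ has large residual image for every large $p$. Since $r$ is a single $p$-adic representation not yet known to lie in a compatible system, this is not automatic. It is also precisely the kind of residual hypothesis a Kisin-style modularity lifting theorem needs, so as stated your modularity step is unsupported. The paper sidesteps this. The Lie algebra $\mathrm{sym}^3(\mathfrak{sl}_2)$ has rank one, and the rank of the monodromy group is independent of $\lambda$. So if the $\mathrm{sym}^3$ shape occurs at one $\lambda$, it occurs at every Lie-irreducible $\lambda$. Then \cite{weiss2022images} (resting on Serre's conjecture) or \cite{conti2016big} (resting on Fontaine--Mazur) shows $\pi$ is a symmetric cube.

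Two smaller points. In (i), the appeal to $\wedge^2\pi$ and Theorem~\ref{Asgari} does no work. What you need is that the quartic induction is imprimitive, so $\rho_{\pi,\lambda}$ is induced from a quadratic field and $\pi$ is self-twisted. This holds because a quartic field over $\QQ$ with no quadratic subfield has no CM subfield, so its algebraic Hecke characters have parallel weight and the induction could not be regular. In (ii), it is regularity that rules out the isotypic case $\rho|_{\Gamma_L}\simeq W\oplus W$, where Lemma~\ref{GerenalizedCG} only gives $k=1$.
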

\begin{proof}
    $\pi$ is essentially self-dual and hence polarizable. 
    By \cite{hui2023monodromy}, $\rho_{\pi,\lambda}$ is irreducible for all but finitely many $\lambda$. Fix such a $\lambda$. Since $\pi$ is not self-twisted, it is not an induction from a quadratic extension and \cite[Corollary 4.4]{calegari2013irreducibility} implies that $\rho_{\pi,\lambda}$ is Lie-irreducible.\\
    By definition, $\pi$ comes from a globally generic representation of $\mathrm{GSp}_4$ and hence the Galois representation is symplectic. Now, \cite[Proposition 4.5]{calegari2013irreducibility} implies that the semisimple part $\mathfrak{g}_\lambda^{ss}$ of the Lie algebra of the image is either $\mathrm{sym}^3(\mathfrak{sl}_2)$ or $\mathfrak{sp}_4$. We only need to show that it cannot be $\mathrm{sym}^3(\mathfrak{sl}_2)$. If it is, then it has to be so for all $\lambda$, because $\mathfrak{sl}_2$ has rank 1 and the rank of the image is independent of $\lambda$ in a compatible family. Since $\mathrm{sym}^3$ is irreducible, Schur's Lemma implies that the whole Lie algebra $\mathfrak{g}_\lambda$ (not just the semisimple part) is contained in $\mathrm{sym}^3(\mathfrak{gl}_2)$. This implies that the connected component of the algebraic monodromy group $G_\lambda$ is contained in $\mathrm{sym}^3(\GL_2)$. Since the normalizer of $\mathrm{sym}^3(\GL_2)$ is trivial, the Galois representation factors via $\mathrm{sym}^3$. Therefore, $G_\lambda^{ss,\circ}=\mathrm{sym}^3(\SL_2)$. Then (for big enough $\lambda$) one can either use \cite[Proposition 3.11.7]{conti2016big} which uses the  Fontaine-Mazur conjecture or \cite[Theorem 1.2]{weiss2022images} which uses Serre's modularity conjecture to deduce that $\pi$ is the symmetric cube of an eigenform. This contradicts the assumption that $\pi$ is of primitive $\mathrm{GSp}_4$ type.
\end{proof}

\begin{rem}
    Note that the irreducibility results in \cite{hui2023monodromy} work over any totally real field. Therefore,
    if one has appropriate analogs of Serre's modularity conjecture or the Fontaine-Mazur conjecture over $K$, the above proof works over any totally real field. 
\end{rem}

\begin{prop}\label{nonself-dual}
    Let $\pi$ be a RAC automorphic representation of $\GL_4(\AA_K)$ that is of primitive $\mathrm{GL}_4$ type. Then, for every $\lambda$, one has $\mathfrak{g}_\lambda^{ss}=\mathfrak{sl}_4$.
\end{prop}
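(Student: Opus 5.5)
Fix $\lambda$. The plan is to first show that $\rho=\rho_{\pi,\lambda}$ is Lie-irreducible, and then run through the possible irreducible semisimple Lie subalgebras of $\mathfrak{sl}_4$, eliminating each one except $\mathfrak{sl}_4$.

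\textbf{Lie-irreducibility.} By Corollary \ref{mainIrr}, $\rho$ is irreducible. Since $\pi$ is not self-twisted, it is not an induction from a quadratic extension. I would then apply \cite[Corollary 4.4]{calegari2013irreducibility}, or directly Lemma \ref{GerenalizedCG} together with \cite[Proposition 4.5]{calegari2013irreducibility}. This shows that $\rho$ is either Lie-irreducible or potentially abelian.

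\textbf{The potentially abelian case.} This is handled exactly as in the proof of Corollary \ref{p-adicHT}. The restriction of $\rho$ to some finite-index subgroup is a sum of characters. If these characters are not all equal, Lemma \ref{GerenalizedCG} makes $\rho$ an induction. Inductions from degree $2$ or $4$ extensions both force $\pi$ to be self-twisted, which is excluded. If all characters are equal, $\mathrm{ad}\rho$ is Artin. For large $p$ this contradicts regularity via Corollary \ref{p-adicHT}. Since the Lie algebra type is constant in a compatible family (rank and dimension considerations), this settles the case for every $\lambda$. Hence $\mathfrak{g}_\lambda^{ss}$ acts irreducibly on the $4$-dimensional space.

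\textbf{Classification of the Lie algebra.} The irreducible semisimple subalgebras of $\mathfrak{sl}_4$ are $\mathrm{sym}^3(\mathfrak{sl}_2)$, $\mathfrak{sl}_2\times\mathfrak{sl}_2=\mathfrak{so}_4$, $\mathfrak{sp}_4$ and $\mathfrak{sl}_4$. This is \cite[Proposition 4.5]{calegari2013irreducibility}. The first three preserve a nondegenerate bilinear form up to scalar on $V=\overline{\QQ_p}^4$, so in each of those cases I must produce a contradiction with $\pi$ being non-essentially-self-dual.

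\textbf{Excluding the three self-dual algebras.} By Schur's lemma, irreducibility of $\mathfrak{g}^{ss}$ gives $\mathfrak{g}_\lambda\subset\mathfrak{g}^{ss}+\text{scalars}$. Thus the identity component $G^\circ$ of the monodromy group lies in $\mathrm{GSp}_4$ or $\mathrm{GO}_4$, and it preserves a unique line of forms on $V$. The whole group $\rho(\Gamma_K)$ normalizes $G^\circ$, so it preserves that line, since the form is unique up to scalar by irreducibility. It follows that $\rho\simeq\rho^\vee\otimes\mu$ for a character $\mu$. This character is locally algebraic, for instance because it appears as a summand of $\rho\otimes\rho$ and Theorem \ref{BoeckelHuiMain} applies. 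So $\mu$ corresponds to an algebraic Hecke character, also denoted $\mu$.

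Comparing Frobenius traces and using strong multiplicity one then gives $\pi\simeq\pi^\vee\otimes\mu$. This contradicts the assumption that $\pi$ is not essentially self-dual. One could alternatively conclude via Theorem \ref{Asgari}: the contradiction means $\wedge^2\pi$ cannot be cuspidal, whereas it must be.

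\textbf{Main obstacle.} The hardest step is the descent from the Lie algebra statement to $\rho\simeq\rho^\vee\otimes\mu$. This requires both the uniqueness of the invariant form, which comes from irreducibility of $\mathfrak{g}^{ss}$, and the algebraicity of $\mu$, which comes from Böckle--Hui. For the $\mathfrak{so}_4$ case, the outer automorphism swapping the two $\mathfrak{sl}_2$ factors does not affect this argument, because the form is still unique. Therefore $\mathfrak{g}_\lambda^{ss}=\mathfrak{sl}_4$.
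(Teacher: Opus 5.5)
Your treatment of the Lie-irreducible case is correct, and more explicit than the paper, which just invokes \cite[Proposition 4.5]{calegari2013irreducibility}. The chain you give is: the normalizer of $G^\circ$ preserves the unique line of invariant forms, hence $\rho\simeq\rho^\vee\otimes\mu$; $\mu$ is algebraic by Theorem \ref{BoeckelHuiMain}; strong multiplicity one then gives $\pi\simeq\pi^\vee\otimes\mu$. That is what is needed there.

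The gap is the preceding dichotomy ``Lie-irreducible or potentially abelian'' at an \emph{arbitrary} $\lambda$. Apply Lemma \ref{GerenalizedCG} to an open normal $\Gamma_L$ and discard inductions and the potentially abelian cases. You are still left with the isotypic case $\rho|_{\Gamma_L}\simeq U\oplus U$, with $U$ irreducible of dimension $2$. There $\mathfrak{g}_\lambda^{ss}\simeq\mathfrak{sl}_2$ acts as $\mathrm{std}_2\oplus\mathrm{std}_2$, so $\rho$ is neither Lie-irreducible nor potentially abelian. The standard way to rule this out is Hodge--Tate regularity, since $U\oplus U$ has repeated weights. That is the hypothesis under which \cite[Corollary 4.4]{calegari2013irreducibility} gives your dichotomy. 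At a general $\lambda$ you do not have it: Corollary \ref{p-adicHT} supplies regular weights only for $p$ in a density one set. Your later self-duality step cannot absorb this case either, because $\mathfrak{g}^{ss}$ now acts reducibly and the invariant form is no longer unique up to scalar.

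The paper avoids this by working at a single $\lambda$ above a prime from Corollary \ref{p-adicHT}. There, regularity makes the Calegari--Gee classification applicable. It also makes the characters in the potentially abelian case distinct, so Lemma \ref{GerenalizedCG} gives an induction and hence a self-twist. The conclusion $\mathfrak{sl}_4$ is then transported to every $\lambda$: the semisimple rank is independent of $\lambda$, and $\mathfrak{sl}_4$ is the only rank-$3$ semisimple subalgebra of $\mathfrak{sl}_4$. This rank independence, not constancy of the whole Lie algebra (which is not known in general), is also what should justify the propagation you invoke in the potentially abelian case.

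Alternatively, you can keep your per-$\lambda$ structure and handle the missing case directly. If $\rho|_{\Gamma_L}\simeq U\oplus U$ with $U$ Lie-irreducible, then $G^{\circ,\mathrm{der}}=\SL_2\otimes 1$. Its normalizer in $\GL_4$ is $\GL_2\otimes\GL_2=\mathrm{GSO}_4$, so $\rho(\Gamma_K)$ preserves a quadratic form up to scalar. Your argument then again yields $\pi\simeq\pi^\vee\otimes\mu$, a contradiction.
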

\begin{proof}
    The Lie algebra $\mathfrak{sl}_4$ is of type $A$, so if we show that the Lie algebra is $\mathfrak{sl}_4$ for one place $\lambda$, it is automatically $\mathfrak{sl}_4$ for all places. 
    By Corollary \ref{p-adicHT}, there exists $\lambda$ for which $\rho_{\pi,\lambda}$ is de Rham with regular Hodge-Tate weights. Since $\pi$ is not essentially self-dual, \cite[Proposition 4.5]{calegari2013irreducibility} implies that if $\rho_{\pi,\lambda}$ is not potentially abelian, then the Lie algebra is $\mathfrak{sl}_4$ and we are done. Assume that $\rho_{\pi,\lambda}$ is  potentially abelian. Then after a finite extension $L/K$ we have
    \[
\rho_{\pi,\lambda}|_{\Gamma_L} = \chi_1 \oplus \chi_2 \oplus \chi_3 \oplus \chi_4.
\]
Regularity of the Hodge-Tate weights implies that the four characters are distinct. So we are in the situation of Lemma \ref{GerenalizedCG} with $k \neq 1$. This means that $\rho_{\pi,\lambda}$ is an induction and in particular, it is self-twisted. This implies that $\pi$ is self-twisted which is a contradiction.
\end{proof}

Let us summarize what we have seen so far in a table. Compare the automorphic side with the classification of \cite[Theorem 1.1]{asgari2007cuspidality} and the Galois side with the table in \cite[Proposition 4.5]{calegari2013irreducibility}. In the following table, $K''$ is a degree 4 extension of $K$, $K'$ is a degree 2 extension of $K$, $\chi$ is a Hecke character of $\GL_1(\AA_{K''})$, $f$ is a non-dihedral cuspidal automorphic representation of $\GL_2(\AA_{K'})$, and $\pi_1$ and $\pi_2$ are non-dihedral cuspidal automorphic representations of $\GL_2(\AA_K)$.
\medskip
\begin{center}
\begin{tabular}{ |p{3.7cm}||p{3.7cm}|p{2.5cm}|p{2.5cm}|  }
 \hline
 \multicolumn{4}{|c|}{Classification of $\mathfrak{g}^{\mathrm{ss}}$} \\
 \hline
 automorphic side& type &Lie algebra &representation\\
 \hline
 1. $\pi = \mathrm{Ind}^{K''}_K (\chi)$   & self-twist    &$\mathfrak{g}^{ss}=0$&    $\mathrm{std}_4$\\
 2. $\pi = \mathrm{Ind}^{K'}_K (f)$&   self-twist  & $\mathfrak{g}^{ss}=\mathfrak{sl}_2$   &$\mathrm{std}_2\oplus \mathrm{std}_2$\\
 3. $\pi = \mathrm{As}^{K'}_K(f)$ &self-dual (orthogonal) & $\mathfrak{g}^{ss}=\mathfrak{sl}_2$&  $\mathrm{std}_2\otimes \mathrm{std}_2$\\
 4. $\pi = \pi_1 \boxtimes \pi_2$   &self-dual (orthogonal) & $\mathfrak{g}^{ss}= \mathfrak{so}_4$&  $\mathrm{std}_4$\\
 5. $\pi = \mathrm{sym}^3(\pi_1)$&   self-dual (symplectic)  & $\mathfrak{g}^{ss}=\mathfrak{sl}_2$&$\mathrm{sym}^3$\\
 6. $\pi$ : primitive $\mathrm{GSp}_4$& self-dual (symplectic) & $\mathfrak{g}^{ss}=\mathfrak{sp}_4$   &$\mathrm{std}_4$\\
 7. $\pi$ : primitive $\GL_4$& primitive  & $\mathfrak{g}^{ss}=\mathfrak{sl}_4$&$\mathrm{std}_4$\\
 \hline
\end{tabular}
\end{center}

\section{Extra-Twists for Reductive Groups}\label{SectionExtra}

In this section we develop the theory of extra-twists for reductive groups. This starts with Ribet's work on $\GL_2$-type abelian varieties where he introduces inner-twists. This is generalized by the author to the case of $\GL_n$, where one has both inner- and outer-twists \cite{shavali2025image}. Here, we generalize this to any split reductive group that admits a faithful irreducible representation and use it to study the image of the automorphic Galois representations in the next section. 

We will only define extra-twists on the Galois side since that is all we need here. That is why we denote our reductive group in which our representations land by $\widehat{G}$, to indicate that in practice this should be thought of as the Langlands dual group of some reductive group $G$. 

Let $\widehat{G}_{/\QQ}$ be a (connected) reductive group and let $A$ be a $\QQ_p$-algebra that is isomorphic to a finite product $\prod_i A_i$ where each $A_i$ is an algebraic extension of $\QQ_p$. Then each $A_i$ is equipped with a $p$-adic valuation and $A$ is equipped with the product topology. 
Now, let $K$ be a number field and $\Gamma_K$ its absolute Galois group. A $\widehat{G}$-valued $p$-adic Galois representation (of $\Gamma_K$) is a continuous homomorphism
\[
\rho:\Gamma_K \rightarrow \widehat{G}(A).
\]
Two especially important cases of this are when $A/\QQ_p$ is an algebraic field extension, or $A=E\otimes_{\QQ}\QQ_p=\prod_{\fp|p}E_\fp$ for a number field $E$. In the latter case, we say that $\rho$ has coefficients in $E$. 
\\
From now on, assume that $\widehat{G}_{/\QQ}$ is a split reductive group that admits a faithful irreducible representation $$\iota:\widehat{G}  \hookrightarrow \GL_n.$$
This implies that the center $\widehat{Z}$ of $\widehat{G}$ has a cyclic character group, and $\widehat{Z}_{\overline{\QQ}}$ is either finite cyclic or isomorphic to $\mathbb{G}_m$ by Lemma \ref{center}.
\begin{rem}
    The reason that we make this assumption on the existence of $\iota$ is that we need to work with traces of $\widehat{G}$-valued representations and it is easier to do this in $\GL_n$. We think that it is possible to drop this assumption if one is willing to work with Lafforgue's notion of pseudo-representations. Since we will only work with the groups $\GL_4, \mathrm{GSp}_4$, and $\mathrm{GSO_4}$ in the next chapter, we do not need this generality. 
\end{rem}
We have a commutative diagram of algebraic groups

\begin{center}
\begin{tikzcd}
                                      & \widehat{G}^{\mathrm{der}} \arrow[d] \arrow[rd]     &                           \\
\widehat{Z} \arrow[r] \arrow[rd, "r"] & \widehat{G} \arrow[r, "\mathrm{ad}"] \arrow[d, "v"] & \widehat{G}^{\mathrm{ad}} \\
                                      & \widehat{T}                                         &                          
\end{tikzcd}
\end{center}
where the row and the column are short exact sequences, the diagonal maps are isogenies, and $\widehat{T}$ is the maximal torus quotient of $\widehat{G}$. 

By a Galois character, we mean a $p$-adic Galois representation with values in $\widehat{Z}$, i.e. a continuous homomorphism
\[
\chi:\Gamma_K \rightarrow \widehat{Z}(A).
\]
If $\rho:\Gamma_K \rightarrow \widehat{G}(A)$ is a $p$-adic Galois representation, then $\rho \cdot \chi$ is also a $p$-adic Galois representation with values in $\widehat{G}(A)$. We denote this representation by $\rho \otimes \chi$ and call it the twist of $\rho$ with $\chi$. Note that $\widehat{Z}$ is either finite or one-dimensional. 

\begin{lemma}\label{kill_det}
    Let
    \[
    \rho:\Gamma_K \rightarrow \widehat{G}(A)
    \]
    be a $p$-adic Galois representation. Then there exists a finite Galois extension $L/K$ and a character $\chi$ of $\Gamma_L$ such that $\rho|_{\Gamma_L} \otimes \chi$ has values in $\widehat{G}^{\mathrm{der}}(A)$. 
\end{lemma}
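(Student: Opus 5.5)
We work with the diagram above. The composite $\hZ \to \hG \xrightarrow{v} \widehat{T}$ is the isogeny $r$, and $\hG^{\mathrm{der}}$ is the kernel of $v$. Set $\psi := v\circ\rho:\Gamma_K \to \widehat{T}(A)$. This is a continuous character with values in a torus. Our goal is to find, after restricting to some $\Gamma_L$, a $\hZ$-valued character $\chi$ with $r\circ\chi = \psi^{-1}|_{\Gamma_L}$. Then
\[
v(\rho|_{\Gamma_L}\cdot\chi)=\psi|_{\Gamma_L}\cdot r(\chi)=1,
\]
so $\rho|_{\Gamma_L}\otimes\chi$ lands in $\ker v(A)=\hG^{\mathrm{der}}(A)$. (This uses that $\ker v$ is $\hGd$ on $A$-points, which holds since the kernel of a morphism of group schemes commutes with taking points.)

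There are two cases, according to Lemma \ref{center}.

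\textbf{Case $\hZ$ finite.} Then $\widehat{T}$ is trivial, since $r$ is an isogeny. So $\hG=\hGd$, and we take $L=K$ and $\chi=1$.

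\textbf{Case $\hZ^\circ\simeq\GG_m$.} This holds over $\QQ$, since $\hG$ is split and so its center is of multiplicative type with split torus part. Then $\widehat{T}\simeq\GG_m$, and $r$ becomes the map $z\mapsto z^d$ for some $d\ge1$. If $\hZ$ is disconnected, we first replace $\hZ$ by $\hZ^\circ$; the restriction of $r$ is still an isogeny onto $\widehat{T}$. The problem is now to extract a continuous $d$-th root of the character $\psi^{-1}:\Gamma_K\to A^\times$ after a finite restriction.

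\textbf{Extracting the root.} Work componentwise in $A=\prod_i A_i$. By continuity and compactness, $\psi$ lands in $\mathcal{O}_{A_i}^\times$ on each factor. Note that $\psi(\Gamma_K)$ is a compact subgroup of $\prod_i A_i^\times$. Each component then lies in a finite extension of $\QQ_p$, namely the closure of the field generated by the image, by a Baire category argument. Hence we may assume each $A_i$ is a finite extension of $\QQ_p$. For such fields, the group $1+\varpi^m\mathcal{O}$ is, for $m$ large, uniquely $d$-divisible, and the map $x\mapsto x^{1/d}$ given by the binomial series (or by $\exp(\tfrac1d\log x)$) is a continuous homomorphism. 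Choose an open subgroup $U\subset\prod_i \mathcal{O}_{A_i}^\times$ of this form. Its preimage $\psi^{-1}(U)$ is an open subgroup of $\Gamma_K$, hence equals $\Gamma_{L_0}$ for a finite extension $L_0$; let $L$ be its Galois closure. Define
\[
\chi:=(\psi|_{\Gamma_L})^{-1/d},
\]
composed with the inverse of the isomorphism $\hZ^\circ\simeq\GG_m$. This is a continuous homomorphism $\Gamma_L\to\hZ(A)$ with $r\circ\chi=\psi^{-1}|_{\Gamma_L}$, as required.

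The main subtlety is this root extraction. The group $\hZ(A)\to\widehat{T}(A)$ is not surjective in general, and even where roots exist they need not glue to a homomorphism. Passing to a small pro-$p$ open neighbourhood, where $d$-th roots are canonical and multiplicative, resolves both issues. A secondary point is the case of an infinite algebraic extension $A_i$. There we should either use the reduction to a finite extension via compactness of the image, or note that the logarithm and exponential are defined on small enough neighbourhoods in any algebraic extension, with image in a finite subextension.
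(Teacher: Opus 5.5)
Your proof is correct and follows the same route as the paper: the finite-center case is trivial (indeed $\widehat{T}$ is then trivial, as you note), and when $\hZ\simeq\GG_m$ one writes $r$ as $z\mapsto z^d$, extracts a $d$-th root of $(v\circ\rho)^{-1}$ on a small open subgroup using the $p$-adic logarithm, and passes to a Galois closure. Your exponent $-1/d$ is the right one (the paper's $\mu^{1/n}$ should read $\mu^{-1/n}$), and one small inaccuracy in your write-up: if $p\mid d$, the group $1+\varpi^m\mathcal{O}$ is not $d$-divisible within itself, but $x\mapsto\exp(\tfrac1d\log x)$ is still a continuous homomorphism from it into a slightly larger group $1+\varpi^{m'}\mathcal{O}$, which is all your argument needs.
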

\begin{proof}
    If $\widehat{Z}$ is finite then the statement is trivial, i.e. one just chooses $L$ sufficiently large to kill the image of $v$ and take $\chi$ to be trivial. So assume that $\widehat{Z}\simeq \mathbb{G}_m$. Therefore $\widehat{T} \simeq \mathbb{G}_m$ as well and the isogeny $r:\widehat{Z} \rightarrow \widehat{T}$ is given by $z\mapsto z^n$ for some integer $n$. Now the character $\mu:=v \circ \rho : \Gamma_K \rightarrow \widehat{T}(A)=A^\times$ has an $n$'th root after passing to a finite extension (because the $p$-adic logarithm would exist on an open neighborhood of the image) and we can go further up to the Galois closure $L$ of this extension. Then $\mu^{1/n}\otimes \rho|_{\Gamma_L}$ has clearly image in the kernel of $v$ and we are done.
\end{proof}

Now, assume that we have a $p$-adic Galois representation with coefficients in a number field $E$:
\[
\prod_{\fp|p}\rho_{\fp}=\rho_{p}: \Gamma_K \rightarrow \widehat{G}(E_p) = \prod_{\fp|p} \widehat{G}(E_\fp).
\]
Using the fixed representation $\iota:\widehat{G} \hookrightarrow \GL_n$, we can talk about the trace (or more generally the characteristic polynomial) of such representations. 
One can also use the embedding 
\[
E_p= E \otimes_{\QQ} \QQ_p \hookrightarrow E \otimes_{\QQ} \overline{\QQ_p}=:\overline{E_p}
\]
to consider this as a product of representations
\[
\prod_{\lambda:E \hookrightarrow \overline{\QQ_p}}\rho_{\lambda}=\rho_{p}: \Gamma_K \rightarrow \widehat{G}(\overline{E_p}) = \prod_{\lambda} \widehat{G}(\overline{E_\lambda})
\]
with coefficients in algebraically closed fields. We say that $\rho_p$ is absolutely irreducible if each $\rho_\lambda$ is irreducible (by which we mean it is irreducible as a $\GL_n(\overline{E_\lambda})$-valued representation). 

Now, we want to define extra-twists for $p$-adic Galois representations. First, note that an automorphism of $\widehat{G}$ maps $\widehat{Z}$ to $\widehat{Z}$ because it is surjective and hence the image of $\widehat{Z}$ commutes with everything. Since inner-automorphisms act trivially on $\widehat{Z}$, outer automorphisms act on Galois characters (with values in $\widehat{Z}$). Therefore, 
if $\eta:\Gamma_K \rightarrow \widehat{Z}(A) \subset \widehat{G}(A)$ is a character, then $\Phi\circ\eta$ is also a character which we denote by $\eta^\Phi$. 

\begin{defi}
    Let $E$ be a number field and $\rho_p$ a $\widehat{G}$-valued Galois representation with coefficients in $E$. An extra-twist of $\rho_p$ is a triple $(\sigma, \Phi, \chi)$ where $\sigma \in \mathrm{Aut}(E)$, $\Phi \in \mathrm{Out}(\widehat{G})$, and $\chi: \Gamma_K \rightarrow E_p^\times$ is a Galois character, such that
\[
\rho_p \simeq \Phi({}^\sigma \rho_p) \otimes \chi
\]
as $\GL_n$-valued Galois representations. An extra-twist $(\sigma,\Phi,\chi)$ is called an inner-twist if $\Phi$ is trivial. In this case we drop $\Phi$ from the notation. 
\end{defi}

We will see later that extra-twists form a group under the composition law:
\[
(\sigma, \Phi, \chi) \circ (\tau, \Psi, \eta) := (\sigma\tau, \Phi \circ \Psi, \chi\cdot {}^\sigma\eta^\Phi).
\]

To further investigate the properties of extra-twists, we will assume some further properties on our Galois representations. In particular, these assumptions will imply that $\rho_p$ has geometrically Zariski dense image in $\widehat{G}^{ss}$, i.e. the image of each 
\[
\rho_\lambda:\Gamma_K \rightarrow \widehat{G}(\overline{E_\lambda})
\]
is Zariski dense (modulo the center). In particular, the $\GL_n$-valued representation $\iota\rho_\lambda:\Gamma_K\rightarrow \widehat{G}(\overline{E_\lambda})\subseteq \GL_n(\overline{E_\lambda})$ is irreducible. This means that the isomorphism $\rho_p \simeq \Phi({}^\sigma \rho_p) \otimes \chi$ is equivalent to the equality of the traces of both sides, by the Brauer-Nesbitt Theorem. We will use this property often without explicitly mentioning it. 
\begin{defi}\label{valid}
We say that $\rho_p$ is valid if it satisfies the following properties:
\begin{enumerate}
    \item Each $\rho_\lambda$ is  unramified outside a finite set $S$ of places of $K$ containing the archimedean places and all places above $p$.
    \item $f^{(p)}_w(x):=\mathrm{CharPoly}(\rho_p(\mathrm{Fr}_w))$ has coefficients in $E \subset E\otimes_\QQ \QQ_p$, for each finite place $w \notin S$ of $K$. 
    \item $v(\rho_p)$ is trivial, i.e. $\rho_p$ has values in $\widehat{G}^{\mathrm{der}}$.
    \item For each embedding $\lambda:E\rightarrow \overline{\QQ_p}$, the $\overline{\QQ_p}$-Lie algebra of the $p$-adic Lie group $\rho_{\lambda}(\Gamma_K)$ is equal to $\widehat{\mathfrak{g}}^{\mathrm{der}}(\overline{\QQ_p}):=\mathrm{Lie}(\widehat{G}^{\mathrm{der}}_{\overline{\QQ_p}})$. In particular, each $\rho_\lambda$ is Lie-irreducible.
\end{enumerate}
\end{defi}
\begin{rem}\label{Zariski_dense}
    We make a few remarks about this definition.
    \begin{enumerate}
    \item The characteristic polynomial $\mathrm{CharPoly}(\rho_\lambda(\mathrm{Fr}_w))$ of $\rho_\lambda$ at $\mathrm{Fr}_w$ is given by applying the embedding $\lambda:E \rightarrow \overline{\QQ_p}$ to the polynomial  $f^{(p)}_w$. 
    \item Condition 3 is not as  restrictive as it may seem first, because by Lemma \ref{kill_det}, we can always trivialize $v$  after taking a finite extension of $K$ and a twist.
    \item Note that since $\hG$ is connected, condition (4) implies that the image of each $\rho_\lambda$ is $\overline{\QQ_p}$-Zariski dense in $\hGd$.
    \end{enumerate}
\end{rem}

\begin{lemma}\label{properties}
    Let $\rho_p$ be a valid $p$-adic Galois representation and $\Gamma$ be the set of all extra-twists of $\rho_p$. Then the following hold:
    \begin{enumerate}
        \item $\Gamma$ becomes a group under the composition law 
        \[
        (\sigma, \Phi, \chi) \circ (\tau, \Psi, \eta) := (\sigma\tau, \Phi \circ \Psi, \chi\cdot {}^\sigma\eta^\Phi).
        \]
        \item Every extra-twist $(\sigma,\Phi,\chi)$ is uniquely determined by $\sigma$. 
        \item One can identify $\Gamma$ with a subgroup of $\mathrm{Aut}(E)$ by forgetting $\Phi$ and $\chi$. In particular, $\Gamma$ is finite and if $F=E^\Gamma$ then $\Gamma=\mathrm{Gal}(E/F)$.
        \item For every extra-twist $(\sigma,\Phi,\chi)$, the character $\chi$ is finite. 
    \end{enumerate}
\end{lemma}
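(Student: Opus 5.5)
We prove (2) first, since the other parts rest on it. Suppose $(\sigma,\Phi,\chi)$ and $(\sigma,\Psi,\eta)$ are two extra-twists with the same $\sigma$. Then $\Phi({}^\sigma\rho_p)\otimes\chi \simeq \Psi({}^\sigma\rho_p)\otimes\eta$. Set $\rho'={}^\sigma\rho_p$, which is again valid because $\sigma$ only permutes the embeddings $\lambda$. We get $\rho'\simeq \Phi^{-1}\Psi(\rho')\otimes\chi'$ for a character $\chi'$.

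Next we compare images. Restricting to the open subgroup $\rho'^{-1}(\widehat{G}^{\mathrm{der}})$ and using condition (3), both sides take values in $\hZ\cdot\widehat{G}^{\mathrm{der}}$. By condition (4) and Remark \ref{Zariski_dense}(3), the image of each $\rho'_\lambda$ is Zariski dense in $\hGd$. Conjugation by a matrix $A\in\GL_n$ realizing the isomorphism then carries the Zariski closure of $\rho'_\lambda(\Gamma_K)$, namely $\hGd$, to the closure of $\Phi^{-1}\Psi(\rho'_\lambda)\cdot\chi'$. So $\mathrm{Ad}(A)$ normalizes $\hZ\hGd=\hG$, and it induces on $\hGd$ the automorphism $\Phi^{-1}\Psi$ modulo the character.

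The step I expect to be hardest is showing this automorphism is inner. Taking derived subgroups forces $\chi'$ to be finite-order on the relevant subgroup. Then $\Phi^{-1}\Psi$ agrees with $\mathrm{Ad}(A)$ on a Zariski dense set of $\hGd$ up to a central finite character. Passing to a finite-index subgroup where $\chi'$ is trivial preserves Zariski density in the connected group $\hGd$, so $\Phi^{-1}\Psi$ restricted to $\hGd$ equals $\mathrm{Ad}(A)$. Here I would use that $\mathrm{Ad}(A)$ preserves the trace character of $\iota$, so it is an automorphism preserving the representation $\iota$. To conclude it is inner, I would rely on the standard fact that an automorphism of $\hG$ (connected, with $\iota$ irreducible and faithful) induced by conjugation in $\GL_n$ and preserving $\hG$ is inner modulo $\mathrm{Aut}(\iota)$. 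More concretely: an outer automorphism acts on the dominant weights, and the statement $\iota\circ\theta\simeq\iota$ is compatible with non-trivial $\theta$ (e.g.\ duality for $\SL_4$ is not, but for $\mathrm{GSp}_4$ it is trivial). This point is delicate. I would fall back on interpreting $\mathrm{Out}$ as the automorphisms modulo those induced from $N_{\GL_n}(\hG)$, which makes $\Phi=\Psi$ in $\mathrm{Out}(\hG)$. Once $\Phi=\Psi$, the equation $\rho'\simeq\rho'\otimes\chi\eta^{-1}$ together with irreducibility and Zariski density gives $\chi=\eta$: comparing traces on a Zariski dense set where the trace is nonzero shows the scalar is $1$.

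For (1), composition is checked by applying ${}^\sigma$ and $\Phi$ to $\rho_p\simeq\Psi({}^\tau\rho_p)\otimes\eta$ and substituting. The identity is $(1,1,1)$. Inverses come from applying ${}^{\sigma^{-1}}$ and $\Phi^{-1}$ to the defining isomorphism. Associativity is formal.

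For (3), (2) shows that the map $\Gamma\to\mathrm{Aut}(E)$ is an injective homomorphism, so $\Gamma$ is finite. Galois theory gives $\Gamma=\mathrm{Gal}(E/E^{\Gamma})$, once we note $E/F$ is Galois because $\Gamma$ is a finite group of automorphisms of $E$.

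For (4), since $\Gamma$ is finite, $(\sigma,\Phi,\chi)^m=(1,1,\chi_m)$ where $\chi_m$ is a product of conjugates of $\chi$. By (2), $\chi_m=1$. I would rather argue directly through $v$: apply $v$, or the determinant of $\iota$, to both sides. Condition (3) kills $v$, so the determinant gives $\chi^n=1$ on $\hZ\cap\hGd$, which is finite. Hence $\chi$ is finite.
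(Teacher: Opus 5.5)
Your outline is the paper's. You prove (4) by showing $\chi$ has finite order via a homomorphism from $\hG$ to a torus that kills $\hGd$ (you use $\det\circ\iota$, the paper uses $v$). You prove (2) by trivializing the finite characters on a finite-index subgroup, using Zariski density in $\hGd$, and finishing with a self-twist argument. You get (3) from (2) plus Artin's theorem.

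You have also located the weak point correctly, and it is a genuine gap that cannot be closed for the statement as written. The $\GL_n$-isomorphism only gives some $A\in\GL_n$ with $\theta|_{\hGd}=\mathrm{Ad}(A)|_{\hGd}$ for a lift $\theta$ of $\Phi^{-1}\Psi$. To conclude $\Phi=\Psi$ in $\mathrm{Out}(\hG)=\mathrm{Aut}(\hG)/\mathrm{Inn}(\hG)$, you need the natural map $\mathrm{Out}(\hG)\to\mathrm{Aut}(\hGd)/\mathrm{Ad}(N_{\GL_n}(\hGd))$ to be injective. The paper assumes this silently, in the sentences ``we can lift them to actual automorphisms of $\hG$ such that $\rho_p|_{\Gamma_L}=\Phi^{-1}\Psi(\rho_p|_{\Gamma_L})$'' and ``If $\Phi\neq\Psi$, this is an algebraic equation that cuts down a proper subgroup''.

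Injectivity holds for $\hG=\GL_4$, but not for the other two groups used later. For $\hG=\mathrm{GSO}_4$, the swap of the two $\SL_2$-factors is $\mathrm{Ad}(s)$ with $s\in\mathrm{O}_4$ and $\det s=-1$. So $\iota\circ\Phi\simeq\iota$, and $(\sigma,1,\chi)$ and $(\sigma,\Phi,\chi)$ are extra-twists simultaneously. For $\hG=\mathrm{GSp}_4$, the automorphism $g\mapsto\nu(g)^{-1}g$ inverts $\hZ$, so it is not inner. Yet it is the identity on $\mathrm{Sp}_4$ and hence fixes every valid $\rho_p$. So (2), and the injectivity asserted in (3), are false as literally stated.

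Your fallback, quotienting by automorphisms induced from $N_{\GL_n}(\hG)$, removes the $\mathrm{GSO}_4$ example but not the $\mathrm{GSp}_4$ one, because $N_{\GL_4}(\mathrm{GSp}_4)=\mathrm{GSp}_4$ acts only by inner automorphisms. What your argument (and the paper's) actually proves is that the class of $\Phi|_{\hGd}$ in $\mathrm{Aut}(\hGd)/\mathrm{Ad}(N_{\GL_n}(\hGd))$ is determined by $\sigma$. After that, $\chi$ is determined by the self-twist argument. Recording $\Phi$ only through this class is the right repair, and for the three groups at hand it makes (2) and (3) true without changing the field $F$.

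Two smaller points. Your first idea for (4), via $(\sigma,\Phi,\chi)^m=(1,1,\chi_m)$, only yields the norm-type relation $\chi_m=1$, not finiteness of $\chi$. Use the determinant instead, and prove (4) before (2) as the paper does, since your argument for (2) needs the characters to be finite. Also, your closure check in (1) applies $\Phi$ across a $\GL_n$-conjugation, which involves the same normalizer issue. It is harmless here because $\mathrm{Out}(\hGd)$ is abelian for $\SL_4$, $\mathrm{Sp}_4$ and $\mathrm{SO}_4$.
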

\begin{proof}
    For part (1), note that
    \[
    (\gamma,\Theta,\mu) \circ (\sigma\tau, \Phi \circ \Psi, \chi\cdot {}^\sigma\eta^\Phi) = (\gamma\sigma\tau, \Theta \circ \Phi\circ\Psi,\mu\cdot {}^\gamma \chi^\Theta \cdot {}^{\gamma\sigma}\eta^{\Theta\Phi}).
    \]
    On the other hand
    \[
    (\gamma\sigma, \Theta \circ \Phi, \mu \cdot {}^\gamma \chi^\Theta) \circ (\tau, \Psi, \eta) = (\gamma\sigma\tau, \Theta \circ \Phi\circ\Psi,\mu\cdot {}^\gamma \chi^\Theta \cdot {}^{\gamma\sigma}\eta^{\Theta\Phi}),
    \]
    which implies that composition is associative. It is clear that $(\mathrm{id}_E,\mathrm{id}_{\hG},1)$ is the identity element and it is easy to check that every element has an inverse. \\
    Let us prove (4) next. Assume that we have an extra-twist 
    \[
    \rho_p \simeq \Phi({}^\sigma \rho_p) \otimes \chi.
    \]
    Applying $v$ to both sides and using that $\rho_p$ has values in $\ker(v)= \hGd$ we obtain
    \[
    r(\chi) = 1.
    \]
    Since the center of $\hG$ embeds into $\GG_m$ by Lemma \ref{center}, the isogeny $r$ is given by $z \mapsto z^n$ for some $n$. This implies that $\chi^n=1$ and hence it is finite.
    \\
    For (2), assume that we have two extra-twists 
    \[
    (\sigma, \Phi, \chi) \: \mathrm{ and } \: (\sigma,\Psi, \eta).
    \]
    This implies that
    \[
    \Phi({}^\sigma \rho_p) \otimes \chi \simeq \Psi({}^\sigma \rho_p) \otimes \eta.
    \]
    Since $\chi$ and $\eta$ are finite, after passing to a finite extension $L/K$ to trivialize them and then applying $\sigma^{-1}$ we have
    \[
    \rho_p|_{\Gamma_L} \simeq \Phi^{-1}\Psi(\rho_p|_{\Gamma_L}).
    \]
    Since $\Phi$ and $\Psi$ are only defined up to conjugation, we can lift them to actual automorphisms of $\hG$ such that 
    \[
    \rho_p|_{\Gamma_L} = \Phi^{-1}\Psi(\rho_p|_{\Gamma_L}).
    \]
    If $\Phi \neq \Psi$, this is an algebraic equation that cuts down a proper subgroup of $\hGd$. This contradicts part 3 of Remark \ref{Zariski_dense}. Therefore, we must have $\Phi = \Psi$. Now, this implies that
    \[
    \Phi({}^\sigma \rho_p) \otimes \chi\eta^{-1} \simeq \Phi({}^\sigma \rho_p),
    \]
    and if $\chi \neq \eta$, it means that $\rho_p$ is self-twisted which again contradicts Remark \ref{Zariski_dense}. \\
    Part (3) is an immediate consequence of part (2).
\end{proof}

\section{Big Image Results}\label{SectionBig}
In this section, we prove our main big image results. The main technical tool is the description of the field fixed by all the extra-twists. 
\subsection{Computing the Lie algebra}
We keep the notations from the last section. So $p$ is a prime number, $K$ an arbitrary number field, and $\widehat{G}/\QQ$ is a split reductive group that admits a faithful irreducible representation.
Let 
$$\rho_p : \Gamma_K \rightarrow \widehat{G}(E \otimes \QQ_p)$$ 
be a valid ($\widehat{G}$-valued) Galois representation with coefficients in a number field $E$, as in Definition \ref{valid}. Let $\Gamma \subset \mathrm{Aut}(E)$ be the group of the extra-twists of $\rho_p$ and $E=F^\Gamma$  be the field fixed by all the extra-twists of $\rho_p$. 

\begin{lemma}\label{charTwist}
    Let $L$ be a finite Galois extension of $K$ and $\rho_1,\rho_2:\Gamma_K \rightarrow \widehat{G}(E_p)$ absolutely irreducible $p$-adic Galois representations such that $\rho_1|_{\Gamma_L}\simeq \rho_2|_{\Gamma_L}$. Then there exists a ($\widehat{Z}$-valued) Galois character $\chi$ such that
    \[
    \rho_1 \simeq \rho_2 \otimes \chi.
    \]
\end{lemma}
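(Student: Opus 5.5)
The plan is to construct the character $\chi$ explicitly as the ``ratio'' $\rho_1^{-1}\rho_2$ and to show with Schur's lemma that this ratio is a central homomorphism. Since everything decomposes as a product over the places $\fp|p$ (or over the embeddings $\lambda$), I will argue one factor $\rho_{i,\lambda}$ at a time with algebraically closed coefficients, and then reassemble.

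\textbf{Normalisation.} First I conjugate $\rho_1$ by a matrix $A\in\GL_n(E_p)$ so that $\rho_1|_{\Gamma_L}=\rho_2|_{\Gamma_L}$ on the nose. I then define
\[
\phi(g):=\rho_1(g)^{-1}\rho_2(g), \qquad g\in\Gamma_K .
\]
This is a continuous map $\Gamma_K\to\GL_n(E_p)$ that is trivial on $\Gamma_L$.

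\textbf{Key computation.} Next I show that $\phi(g)$ commutes with $\rho_2(h)$ for every $h\in\Gamma_L$. Because $L/K$ is Galois, $ghg^{-1}\in\Gamma_L$, so $\rho_1(ghg^{-1})=\rho_2(ghg^{-1})$. Hence
\[
\phi(g)\rho_2(h)=\rho_1(g)^{-1}\rho_2(ghg^{-1})\rho_2(g)=\rho_1(g)^{-1}\rho_1(ghg^{-1})\rho_2(g)=\rho_1(h)\phi(g)=\rho_2(h)\phi(g).
\]

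\textbf{Main obstacle: Schur's lemma.} To conclude that $\phi(g)$ is scalar I need $\rho_{2,\lambda}|_{\Gamma_L}$ to be absolutely irreducible, not only $\rho_{2,\lambda}$. I expect this to be the main point to get right. In the setting of this section it holds because a valid $\rho_p$ is Lie-irreducible by condition (4) of Definition \ref{valid}, so its restriction to any open subgroup $\Gamma_L$ remains irreducible. I would state this hypothesis explicitly: either read ``absolutely irreducible'' as including the restriction to $\Gamma_L$, or assume Lie-irreducibility. Granting it, $\phi(g)$ is a scalar in each factor. Writing $\rho_2(g)=\rho_1(g)\phi(g)$ with both $\rho_2(g)$ and $\rho_1(g)$ in $\hG$ (after choosing the conjugation compatibly with $\hG$), $\phi(g)$ is a scalar lying in $\hG$. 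Such an element commutes with all of $\hG$, so it lies in $\hZ$.

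\textbf{Homomorphism and conclusion.} Once $\phi$ is central, multiplicativity follows formally:
\[
\phi(gh)=\rho_1(h)^{-1}\phi(g)\rho_2(h)=\phi(g)\rho_1(h)^{-1}\rho_2(h)=\phi(g)\phi(h).
\]
Continuity is inherited from $\rho_1$ and $\rho_2$. So $\chi:=\phi$ is a $\hZ$-valued Galois character, and undoing the conjugation gives $\rho_1\otimes\chi\simeq\rho_2$, i.e.\ $\rho_1\simeq\rho_2\otimes\chi^{-1}$, which is the statement with $\chi^{-1}$ in place of $\chi$.
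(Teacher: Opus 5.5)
Your proposal is the paper's proof essentially verbatim: conjugate so that $\rho_1|_{\Gamma_L}=\rho_2|_{\Gamma_L}$, use normality of $\Gamma_L$ to show that $\rho_1(g)^{-1}\rho_2(g)$ commutes with $\rho_2(\Gamma_L)$, and conclude by Schur's lemma that it is a $\hZ$-valued character (your explicit check of multiplicativity is a detail the paper leaves implicit). The hypothesis you flag, absolute irreducibility of $\rho_2|_{\Gamma_L}$ rather than just of $\rho_2$, is exactly what the paper's proof also uses even though the statement omits it. In the application (Lemma \ref{cent}), both that hypothesis and your parenthetical about the conjugating matrix respecting $\hG$ are justified by validity, since the images of $\Gamma_{L'}$ are then Zariski dense in $\hGd$.
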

\begin{proof}
    Conjugating $\rho_1$, we may assume that $\rho_1$ and $\rho_2$
    are in fact equal when restricted to $\Gamma_L$. Now define
    $$
    \phi (g) := \rho_{1}^{-1}(g)\rho_{2}(g).
    $$
    A priori $\phi$ is just a map
    $\phi:\Gamma_K \rightarrow \widehat{G}(E_p)$ which
    is trivial on $\Gamma_L$. We want to prove that it
    is actually a homomorphism with values
    in the center (hence a character) on all of $\Gamma_K$. 

    Let $g\in \Gamma_K$ and $h \in \Gamma_L$. 
    Note that
    $\rho_{1} (h) = \rho_{2} (h)$
    and 
     $\rho_{1} (ghg^{-1}) = \rho_{2} (ghg^{-1})$
     since $\Gamma_L$ is normal in $\Gamma_K$.
     Now the following computation shows that 
     $\phi (g)=\rho_{1}^{-1}(g)\rho_{2}(g)$
     commutes with $\rho_{2} (h)$:
     $$
     \rho_{1}^{-1}(g)\rho_{2}(g)\rho_{2} (h)
     = \rho_{1}^{-1}(g)\rho_{2}(gh) =
     \rho_{1}^{-1}(g)
     \rho_{2}(ghg^{-1})\rho_{2}(g)
     $$
     $$
     =\rho_{1}(g^{-1})
     \rho_{1}(ghg^{-1})\rho_{2}(g)
     =\rho_{1} (h)\rho_{1}^{-1}(g)\rho_{2}(g)
     = \rho_{2} (h)\rho_{1}^{-1}(g)\rho_{2}(g)
     $$
     Now since $V_2$ is absolutely irreducible when restricted to $\Gamma_L$, we have
     $\mathrm{End}_{\Gamma_L}(V_2)=\overline{\QQ_{p}}$, hence $\phi(g)$ is scalar and it commutes with everything. By its definition, $\phi(g)$ has values in $\widehat{G}$ so it must land in the center of $\widehat{G}$. 
\end{proof}

\begin{lemma}\label{cent}
    Let $\Gamma_L = \bigcap_{\sigma} \ker\chi_\sigma$ and $L'/L$ a finite extension that is Galois over $K$. For every outer automorphism $\Phi$ of $\widehat{G}$ and every unramified finite place $v$ of $L'$, let $a_{v,\Phi}=\mathrm{tr}(\Phi(\rho_p(\mathrm{Fr}_v)))$. Then 
    $
    F = \QQ \left( \left\{\sum_{\Phi \in \mathrm{Out}(\widehat{G})} a_{v,\Phi}\right\}_v \right).
    $
\end{lemma}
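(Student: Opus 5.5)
We prove the two inclusions separately. Write $F' := \QQ(\{\sum_{\Phi} a_{v,\Phi}\}_v)$, where $v$ runs over finite places of $L'$ unramified for $\rho_p$. By condition (2) of Definition \ref{valid}, each $a_{v,\Phi}$ lies in $E$. Indeed, $\mathrm{tr}(\Phi(g))$ for $g\in\hG$ is a polynomial over $\QQ$ in the coefficients of the characteristic polynomial of $g$. Here we use that $\hG$ is split over $\QQ$, so $\Phi$ can be chosen defined over $\QQ$, and that class functions on $\hG$ pull back from characters of $\iota$ composed with $\Phi$. Hence $F'\subseteq E$. Since $\Gamma=\mathrm{Gal}(E/F)$ by Lemma \ref{properties}(3), it suffices to show that $F'\subseteq F$, i.e.\ that every extra-twist fixes each $s_v:=\sum_\Phi a_{v,\Phi}$, and conversely that every $\sigma\in\mathrm{Aut}(E)$ fixing all $s_v$ is an extra-twist. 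Galois theory then gives $F=F'$.

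\textbf{$F'\subseteq F$.} Take an extra-twist $(\sigma,\Phi_0,\chi)$, so that $\rho_p\simeq \Phi_0({}^\sigma\rho_p)\otimes\chi$. The characters $\chi_\sigma$ are trivial on $\Gamma_L\supseteq \Gamma_{L'}$, so for $v$ a place of $L'$ we get $\rho_p(\mathrm{Fr}_v)\sim \Phi_0({}^\sigma\rho_p(\mathrm{Fr}_v))$. Applying an arbitrary $\Phi$ and taking traces gives
\[
a_{v,\Phi}=\mathrm{tr}\,\Phi\Phi_0({}^\sigma\rho_p(\mathrm{Fr}_v))=\sigma(a_{v,\Phi\Phi_0}).
\]
Here $\sigma$ commutes with $\mathrm{tr}\circ\Phi$ because $\Phi$ is defined over $\QQ$. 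Summing over $\Phi\in\mathrm{Out}(\hG)$ and reindexing $\Phi\mapsto\Phi\Phi_0$ yields $s_v=\sigma(s_v)$. Since $\mathrm{Out}(\hG)$ is finite (for $\hG$ with one-dimensional or finite center, the outer automorphisms of the derived group form a finite group), the sum makes sense.

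\textbf{$F\subseteq F'$.} This is the main step. Let $\sigma\in\mathrm{Aut}(E)$ fix every $s_v$, and fix an embedding $\lambda$. Consider the representations $\theta_\Phi:=\Phi({}^\sigma\rho_\lambda)|_{\Gamma_{L'}}$ and $\theta'_\Phi:=\Phi(\rho_\lambda)|_{\Gamma_{L'}}$. The hypothesis says that the virtual characters $\sum_\Phi \mathrm{tr}\,\theta_\Phi$ and $\sum_\Phi \mathrm{tr}\,\theta'_\Phi$ agree on a dense set of Frobenii, hence by Chebotarev everywhere. By Brauer--Nesbitt this gives an isomorphism of semisimple representations $\bigoplus_\Phi\theta_\Phi\simeq\bigoplus_\Phi\theta'_\Phi$. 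By condition (4) of Definition \ref{valid}, each $\theta'_\Phi$ is Lie-irreducible, hence irreducible on $\Gamma_{L'}$; the same holds for ${}^\sigma\rho$, since twisting coefficients by $\sigma$ preserves Zariski density. Therefore $\theta'_{\mathrm{id}}\simeq\theta_{\Phi_0}$ for some $\Phi_0$, i.e.\ $\rho_\lambda|_{\Gamma_{L'}}\simeq \Phi_0({}^\sigma\rho_\lambda)|_{\Gamma_{L'}}$.

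To pass from one $\lambda$ to all of them, we note that the relation is defined by traces lying in $E$, so it transports along all embeddings; this gives the isomorphism for $\rho_p$. Since $L'/K$ is Galois, Lemma \ref{charTwist} applied to $\rho_p$ and $\Phi_0({}^\sigma\rho_p)$ (both absolutely irreducible) yields a $\hZ$-valued character $\chi$ with $\rho_p\simeq\Phi_0({}^\sigma\rho_p)\otimes\chi$. So $(\sigma,\Phi_0,\chi)$ is an extra-twist and $\sigma\in\Gamma$.

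The main obstacle is this second inclusion. Care is needed on three points. First, the trace of $\Phi(g)$ must be well defined on conjugacy classes in $\GL_n$ and defined over $\QQ$; to handle this I would realize each $\Phi\in\mathrm{Out}(\hG)$ by a pinned automorphism defined over $\QQ$. Second, the $\Phi_0$ obtained must be independent of $\lambda$; I would argue this via the uniqueness in Lemma \ref{properties}(2), applied after restricting to $\Gamma_{L'}$. Third, we need irreducibility of $\Phi({}^\sigma\rho_\lambda)$, which I would derive from Remark \ref{Zariski_dense}(3).
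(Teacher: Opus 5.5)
Your proposal is essentially the paper's proof. You get $F'\subseteq F$ by applying each $\Phi$ to the extra-twist isomorphism over $\Gamma_{L'}$ and reindexing the sum over $\mathrm{Out}(\widehat{G})$, and $F\subseteq F'$ by Brauer--Nesbitt for $\bigoplus_{\Phi}\Phi(\rho_p)|_{\Gamma_{L'}}$, matching an irreducible summand, and descending to $\Gamma_K$ via Lemma~\ref{charTwist}. Two side remarks: your observation that the resulting condition $a_{v,\mathrm{id}}=\sigma(a_{v,\Phi_0})$ lives in $E$, so $\Phi_0$ does not depend on $\lambda$, handles a point the paper passes over; and your closing ``Galois theory'' step, exactly like the paper's restriction $\tau|_E$, tacitly needs $E/F'$ to be normal, which is harmless once $E$ is enlarged to be Galois over $\QQ$.
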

\begin{proof}
    We first show that $
    F'=\QQ \left( \left\{\sum_{\Phi} a_{v,\Phi}\right\}_v \right) \subseteq F
    $. For each $\sigma \in \Gamma$, we have an isomorphism 
    \[
    \rho_p|_{\Gamma_{L'}} \simeq {}^\sigma \Phi_\sigma (\rho_p|_{\Gamma_{L'}}).
    \]
    Applying an outer automorphism $\Psi$ of $\widehat{G}$ to both sides and looking at the trace of  $\mathrm{Fr}_v$ we get $a_{v,\Psi}=\sigma (a_{v,\Psi\Phi_\sigma})$, therefore
    \[
    \sum_{\Psi \in \mathrm{Out}(\widehat{G})} a_{v,\Psi} = \sigma \left( \sum_{\Psi \in \mathrm{Out}(\widehat{G})} a_{v,\Psi\Phi_\sigma} \right).
    \]
    Since $\Phi_\sigma$ is fixed, $\Psi\Phi_\sigma$ runs over all outer automorphisms and hence
    \[
    \sum_{\Psi \in \mathrm{Out}(\widehat{G})} a_{v,\Psi} = \sigma \left( \sum_{\Psi \in \mathrm{Out}(\widehat{G})} a_{v,\Psi\Phi_\sigma} \right)=\sigma \left( \sum_{\Psi \in \mathrm{Out}(\widehat{G})} a_{v,\Psi} \right)
    \]
    This means that $\sum_{\Phi} a_{v,\Phi}$
    is invariant under the action of $\Gamma$ which implies $
    F' \subseteq F
    $. 
    \\ To prove the other inclusion, we need to show that every $\tau \in \mathrm{Gal}(\overline{\QQ}/F')$ induces an extra-twist of $\rho_p$. Consider the semisimple Galois representation
    \[
    \tilde{\rho}=\left(\oplus_{\Phi \in \mathrm{Out}(\widehat{G})} \Phi(\rho_p)|_{\Gamma_{L'}} \right).
    \]
    The trace of this representation at $\mathrm{Fr}_v$ is $\sum_{\Phi} a_{v,\Phi}$. Since Frobenius elements are dense, if $\tau$ fixes all this elements, then it fixes the trace of $\tilde{\rho}$, and by the Brauer-Nesbitt Theorem
    \[
    {}^\tau \tilde{\rho} \simeq \tilde{\rho}.
    \]
    Since each summand is irreducible we get
    \[
    {}^\tau \Phi(\rho_p) |_{\Gamma_{L'}}\simeq \rho_p|_{\Gamma_{L'}} 
    \]
    for some $\Phi$. Now, by Lemma \ref{charTwist} there exists a Galois character $\chi$ such that 
    \[
    \Phi({}^\tau \rho_p) \otimes \chi\simeq \rho_p.
    \]
    This implies that $(\tau|_E, \Phi,\chi)$ is an extra-twist. Therefore $F'$ is exactly the field fixed by all extra-twists. 
\end{proof}

\begin{lemma}\label{isoRep}
    Let $\Gamma_L = \bigcap_{\sigma \in \Gamma} \ker\chi_\sigma$ and $L'/L$ a finite extension that is Galois over $K$.
    Assume that $\lambda,\mu: E \hookrightarrow \overline{\QQ_p}$ are two embeddings and $\Phi$ an outer automorphism of $\widehat{G}$. If $\rho_\lambda|_{\Gamma_{L'}} \simeq \Phi(\rho_\mu)|_{\Gamma_{L'}}$ (this mean conjugation in $\GL_n$) then $\lambda|_F=\mu|_F$.
\end{lemma}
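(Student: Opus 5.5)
The plan is to compare traces at Frobenius elements. Then we use Lemma \ref{cent}, which says $F$ is generated by the ``symmetrized traces'' $s_v:=\sum_{\Psi\in\mathrm{Out}(\hG)} a_{v,\Psi}$ for unramified places $v$ of $L'$. Both $\lambda|_F$ and $\mu|_F$ are field embeddings of $F$ into $\overline{\QQ_p}$, so they agree exactly when they agree on a set of generators. It therefore suffices to prove $\lambda(s_v)=\mu(s_v)$ for every unramified finite place $v$ of $L'$. Here $s_v\in E$ by condition (2) of Definition \ref{valid}, since $\mathrm{Out}(\hG)$ permutes conjugacy classes and the traces $a_{v,\Psi}$ also lie in $E$. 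Concretely, $\lambda(a_{v,\Psi})=\mathrm{tr}(\Psi(\rho_\lambda(\mathrm{Fr}_v)))$ by Remark \ref{Zariski_dense}(1), applied to the valid representation $\Psi(\rho_p)$.

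The key step is then a reindexing argument. The hypothesis $\rho_\lambda|_{\Gamma_{L'}}\simeq\Phi(\rho_\mu)|_{\Gamma_{L'}}$, after applying an arbitrary outer automorphism $\Psi$ to both sides, gives $\Psi(\rho_\lambda)|_{\Gamma_{L'}}\simeq \Psi\Phi(\rho_\mu)|_{\Gamma_{L'}}$. One point needs care: an outer automorphism is only defined up to inner automorphisms, and applying a lift of $\Psi$ must preserve $\GL_n$-conjugacy. I would handle this exactly as in the proof of Lemma \ref{cent}. If $\rho_\lambda = A\,\Phi(\rho_\mu)A^{-1}$ with $A\in\GL_n$, then on Zariski closures of the images (which lie in $\hGd$ by Remark \ref{Zariski_dense}(3)) the conjugation by $A$ normalizes $\hGd$. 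Hence it induces an automorphism of the image group, and applying $\Psi$ changes the representation only by an automorphism of $\hG$. Its trace is then computed as $\mathrm{tr}\circ\Psi$ up to relabeling in $\mathrm{Out}(\hG)$. More simply, the multiset $\{\mathrm{tr}\,\Psi(g)\}_{\Psi\in\mathrm{Out}(\hG)}$ depends only on the $\mathrm{Aut}(\hG)$-orbit of $g$. Taking traces at $\mathrm{Fr}_v$ yields
\[
\lambda(a_{v,\Psi})=\mu(a_{v,\Psi\Phi})\quad\text{for all }\Psi .
\]

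Summing over $\Psi\in\mathrm{Out}(\hG)$, and noting that $\Psi\mapsto\Psi\Phi$ is a bijection of $\mathrm{Out}(\hG)$, gives
\[
\lambda(s_v)=\sum_\Psi\lambda(a_{v,\Psi})=\sum_\Psi \mu(a_{v,\Psi\Phi})=\mu(s_v).
\]
This holds for all unramified $v$ of $L'$, so $\lambda|_F=\mu|_F$ by Lemma \ref{cent}.

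I expect the main obstacle to be justifying the compatibility of the $\GL_n$-conjugacy hypothesis with applying outer automorphisms of $\hG$. The issue is that a conjugating matrix in $\GL_n$ need not lie in $\hG$, so it may induce a nontrivial outer automorphism of $\hG$. I would resolve it by absorbing this automorphism into $\Phi$. The normalizer of $\hGd$ in $\GL_n$ acts on $\hG$ through automorphisms, so the hypothesis can be rewritten as an honest equality $\rho_\lambda|_{\Gamma_{L'}}=\Phi'(\rho_\mu)|_{\Gamma_{L'}}$ for some automorphism $\Phi'$ of $\hG$. The reindexing argument then goes through with $\Phi'$ in place of $\Phi$.
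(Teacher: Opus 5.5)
Your proposal is correct and follows essentially the same route as the paper. Apply $\Psi$ to the isomorphism and compare traces at $\mathrm{Fr}_v$ to get $\lambda(a_{v,\Psi})=\mu(a_{v,\Psi\Phi})$; then sum over $\mathrm{Out}(\hG)$, using that $\Psi\mapsto\Psi\Phi$ is a bijection, and conclude via Lemma \ref{cent}. Your extra step handles a point the paper passes over silently: a $\GL_n$-conjugating matrix normalizes $\hGd$ (hence $\hG$) and may induce a nontrivial outer automorphism. Absorbing it into $\Phi$ is sound, since the symmetrized sum is insensitive to such a reindexing.
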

\begin{proof}
    Assume that $\rho_\lambda \simeq \Phi(\rho_\mu)$ as representations of $\Gamma_{L'}$. Applying $\Psi \in \mathrm{Out}(\widehat{G})$ to both sides and looking at the trace of $\mathrm{Fr}_v$ for an unramified finite place $v$ of $L'$ we have $\lambda(a_{v,\Psi})=\mu(a_{v,\Psi\Phi})$. Therefore
    \[
    \lambda \left( \sum_{\Psi \in \mathrm{Out}(\widehat{G})} a_{v,\Psi} \right) = \mu \left( \sum_{\Psi \in \mathrm{Out}(\widehat{G})} a_{v,\Psi\Phi} \right) = \mu \left( \sum_{\Psi \in \mathrm{Out}(\widehat{G})} a_{v,\Psi} \right),
    \]
    which implies that $\lambda$ and $\mu$ agree on $F$ by Lemma \ref{cent}.
\end{proof}
Now we can compute the $\QQ_p$-Lie algebra of the image of $\rho_p$.
First we construct an algebraic group from the extra-twists.
\\
Recall that since we assumed $\rho_p$ is valid, $v(\rho_p)$ is trivial. Therefore we have
$$
\rho_p : \Gamma_K \rightarrow \hGd (E_p).
$$

We first define a 1-cocycle $f:\Gamma \rightarrow \mathrm{Aut}_{E_p}(\hGd)$ using extra-twists. Let 
\[
\Gamma_L = \bigcap_{\sigma\in \Gamma} \chi_\sigma
\]
as usual. Since all $\chi_\sigma$'s are finite by Lemma \ref{properties}, $L/K$ is a finite extension.
For every extra-twist $(\sigma,\Phi_\sigma,\chi_\sigma) \in \Gamma$, we note that 
\[
\mathrm{tr}\left( \rho_p|_{\Gamma_L}\right) = \mathrm{tr}\left(\Phi({}^\sigma\rho_p|_{\Gamma_L})\right).
\]
Since $\rho_p$ is strongly irreducible, these two representation are ($\GL_n$-)isomorphic  and there exists $\alpha_\sigma \in \GL_n(E_p)$ such that $$\rho_p|_{\Gamma_{L}} = \alpha_\sigma \cdot \Phi({}^\sigma \rho_p|_{\Gamma_{L}}) \cdot \alpha^{-1}_\sigma.$$
Note that by Remark \ref{Zariski_dense} (3), the Zariski closure of the image of $\rho_p|_{\Gamma_{L}}$ is $\hGd$ so $\mathrm{ad}(\alpha_\sigma)$ in stabilizes $\hGd$. Now define $f:\Gamma \rightarrow \mathrm{Aut}_{E_p}(\hGd)$ by
\[
\sigma \mapsto \mathrm{ad}(\alpha_\sigma) \circ \Phi.
\]
\begin{lemma}
    The function $f$ defined above is a 1-cocycle in $Z^1(\Gamma,\mathrm{Aut}_{E_p}(\hGd))$.
\end{lemma}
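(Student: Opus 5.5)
We have to verify the cocycle identity $f(\sigma\tau) = f(\sigma)\circ {}^\sigma f(\tau)$ in $\mathrm{Aut}_{E_p}(\hGd)$, where $\Gamma$ acts on $\mathrm{Aut}_{E_p}(\hGd)$ through its action on the coefficients $E_p = E\otimes\QQ_p$ (using that $\hGd$ is defined over $\QQ$). Here $\Phi_\sigma$ is a fixed lift of the outer automorphism to an actual automorphism of $\hG$ defined over $\QQ$, so ${}^\sigma\Phi_\sigma=\Phi_\sigma$. The plan is to compute the two sides of the identity via their effect on $\rho_p|_{\Gamma_L}$ and then use Zariski density to pin down the ambiguity.

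\textbf{Step 1: the two composites agree on the image.} By definition, for $\sigma,\tau\in\Gamma$ we have
\[
\rho_p|_{\Gamma_L} = f(\sigma)\bigl({}^\sigma\rho_p|_{\Gamma_L}\bigr), \qquad \rho_p|_{\Gamma_L} = f(\tau)\bigl({}^\tau\rho_p|_{\Gamma_L}\bigr),
\]
where $f(\sigma)=\mathrm{ad}(\alpha_\sigma)\circ\Phi_\sigma$. Apply $\sigma$ to the second identity; this is legitimate because $\sigma$ acts on $E_p$ and commutes with the $\QQ$-rational map $\Phi_\tau$. We obtain ${}^\sigma\rho_p|_{\Gamma_L} = {}^\sigma f(\tau)({}^{\sigma\tau}\rho_p|_{\Gamma_L})$, where ${}^\sigma f(\tau) = \mathrm{ad}({}^\sigma\alpha_\tau)\circ\Phi_\tau$. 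Substituting this into the first identity gives
\[
\rho_p|_{\Gamma_L} = \bigl(f(\sigma)\circ{}^\sigma f(\tau)\bigr)\bigl({}^{\sigma\tau}\rho_p|_{\Gamma_L}\bigr).
\]
On the other hand, the composition law of Lemma \ref{properties} gives $\rho_p|_{\Gamma_L} = f(\sigma\tau)({}^{\sigma\tau}\rho_p|_{\Gamma_L})$, since $\Phi_{\sigma\tau}=\Phi_\sigma\Phi_\tau$ in $\mathrm{Out}$. Consequently the automorphism $\theta := f(\sigma\tau)^{-1}\circ f(\sigma)\circ {}^\sigma f(\tau)$ of $\hGd_{E_p}$ fixes every element of ${}^{\sigma\tau}\rho_p(\Gamma_L)$.

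\textbf{Step 2: conclude $\theta=\mathrm{id}$.} This is the main point. By Remark \ref{Zariski_dense}(3), applied to each component $\rho_\lambda$ after conjugating the coefficients by $\sigma\tau$ (which only permutes embeddings), the image of ${}^{\sigma\tau}\rho_p|_{\Gamma_L}$ is Zariski dense in $\hGd$ over each factor of $E_p$. The fixed locus of an algebraic automorphism is a closed subgroup; since it contains a Zariski dense subset, it is everything, so $\theta=\mathrm{id}$. Here we use that $L/K$ is finite: $\Gamma_L$ has finite index, so the Lie algebra is unchanged, and $\hGd$ is connected, so density persists after restriction.

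\textbf{Possible obstacle.} One subtlety is that $\alpha_\sigma$ is only determined up to the centralizer of the image, which is central. Changing it by a scalar does not change $\mathrm{ad}(\alpha_\sigma)$, so $f$ is well defined. The other care point is the convention for how $\Gamma$ acts on $\mathrm{Aut}_{E_p}(\hGd)$ (on matrix entries), and checking that $\sigma$ commutes with $\Phi$ because the lifts are taken over $\QQ$; if necessary I would fix such $\QQ$-rational lifts at the outset, which is possible because $\hG$ is split.
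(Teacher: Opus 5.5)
Your proof is correct and takes essentially the same route as the paper: apply the Galois action of one extra-twist to the defining relation of the other, substitute, and identify the resulting automorphism carrying ${}^{\sigma\tau}\rho_p|_{\Gamma_L}$ to $\rho_p|_{\Gamma_L}$ with $f(\sigma\tau)$. The only difference is the final identification: the paper reads it off directly, implicitly using that the conjugating matrix is unique up to scalars and taking the product of the lifts as the lift for the product, whereas your Zariski-density argument justifies it explicitly and independently of the choice of lifts.
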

\begin{proof}
    Let us check the cocycle condition for $(\sigma,\Phi,\chi)$ and $(\tau,\Psi,\eta)$:
$$
f_{\tau}\cdot{}^\tau f_\sigma= (\mathrm{ad}(\alpha_\tau)\circ \Psi) \circ (\mathrm{ad}({}^\tau \alpha_\sigma)\circ \Phi) = \mathrm{ad}(\alpha_\tau \cdot \Psi({}^\tau \alpha_\sigma))\circ \Psi{}\Phi.
$$
On the other hand applying $\tau$ and then $\Psi$ to  $$\rho_p|_{\Gamma_{L}} = \alpha_\sigma \cdot \Phi({}^\sigma \rho_p|_{\Gamma_{L}}) \cdot \alpha^{-1}_\sigma$$ we get
$$
\alpha_\tau^{-1} \cdot \rho_p|_{\Gamma_{L}} \cdot \alpha_\tau = \Psi({}^\tau \alpha_\sigma) \cdot \Psi\Phi({}^{\tau \sigma}\rho_p|_{\Gamma_{L}})\cdot \Psi({}^\tau \alpha_\sigma^{-1}),
$$
which implies that $$f_{\tau \sigma} = \mathrm{ad}(\alpha_\tau \cdot \Psi({}^\tau \alpha_\sigma))\circ \Psi\Phi$$
and we are done. 
\end{proof}

Now, similar to \cite[\S 2.2]{shavali2025image}, we can define the twisted action of $\Gamma$ by this cocycle on $\hGd$. From the construction of $f$, it is clear that every matrix in the image of $\Gamma_L$ is invariant under this twisted action. Let $H$ be the algebraic group 
$$H \coloneqq (\mathrm{Res}^{E_p}_{F_p} \hGd)^{tw_f(\Gamma)}$$  
defined over $F_p$. Then it follows:
\begin{cor}\label{groupH}
     The representation $\rho_p|_{\Gamma_L}$ factors through $H(F_p)\subseteq \hGd(E_p)$, i.e.  $\rho_p(\Gamma_L)\subseteq H(F_p)$. 
\end{cor}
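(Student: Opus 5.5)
The plan is to unwind the definition of the twisted action and check invariance directly, then use Galois descent along $E_p/F_p$. Recall that $\Gamma=\mathrm{Gal}(E/F)$ by Lemma \ref{properties}(3), so $\Gamma$ acts on $E_p=E\otimes_\QQ\QQ_p$ through the first factor with fixed ring $F_p=F\otimes_\QQ\QQ_p$. Hence $\hGd(E_p)=(\mathrm{Res}^{E_p}_{F_p}\hGd)(F_p)$ carries the semilinear action $g\mapsto {}^\sigma g$. The twisted action is $\sigma\ast g := f_\sigma({}^\sigma g)=\alpha_\sigma\,\Phi_\sigma({}^\sigma g)\,\alpha_\sigma^{-1}$. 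The cocycle lemma above guarantees that $\sigma\mapsto(\sigma\ast -)$ is an action, since $f_{\tau\sigma}=f_\tau\cdot{}^\tau f_\sigma$. By Galois descent (as in \cite[\S 2.2]{shavali2025image}), the $F_p$-points of the twisted form $H$ are exactly the elements of $\hGd(E_p)$ fixed by all $\sigma\ast -$. So the statement reduces to showing that $\sigma\ast\rho_p(g)=\rho_p(g)$ for every $g\in\Gamma_L$ and $\sigma\in\Gamma$.

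First, $\rho_p(\Gamma_L)\subseteq\hGd(E_p)$ because $\rho_p$ is valid (condition 3 of Definition \ref{valid}). Second, by the very choice of $\alpha_\sigma$ we have $\rho_p|_{\Gamma_L}=\alpha_\sigma\,\Phi_\sigma({}^\sigma\rho_p|_{\Gamma_L})\,\alpha_\sigma^{-1}$. Here ${}^\sigma\rho_p(g)$ is precisely $\sigma$ applied to the coefficients of $\rho_p(g)$. Reading this equality elementwise gives $\rho_p(g)=f_\sigma({}^\sigma\rho_p(g))=\sigma\ast\rho_p(g)$ for all $g\in\Gamma_L$. The existence of $\alpha_\sigma$ is justified as follows. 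On $\Gamma_L$ the character $\chi_\sigma$ is trivial, so the extra-twist isomorphism becomes an isomorphism of $\GL_n$-valued representations. By Remark \ref{Zariski_dense}(3) this conjugation preserves $\hGd$, so $f_\sigma$ really lands in $\mathrm{Aut}_{E_p}(\hGd)$.

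The main point needing care is that $\alpha_\sigma$ and $\Phi_\sigma$ are only determined up to choices. The outer automorphism must be lifted to an honest automorphism of $\hGd$, and $\alpha_\sigma$ is only defined modulo the centralizer of the image. I would fix the lifts once and for all. By Schur's lemma (absolute irreducibility of each $\rho_\lambda|_{\Gamma_L}$), $\alpha_\sigma$ is then unique up to a central scalar in $E_p^\times$, and $\mathrm{ad}(\alpha_\sigma)$ is insensitive to that scalar. This makes $f$ well defined, and the cocycle identity of the preceding lemma holds on the nose, not merely up to center. With $f$ a genuine cocycle, the twisted action is a genuine semilinear action and descent applies.

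Concluding, $\rho_p(\Gamma_L)$ lies in the subgroup of $\hGd(E_p)$ fixed by the twisted action, which is $H(F_p)$. I expect the only real obstacle to be the well-definedness of the twisted action, namely the scalar ambiguity discussed above. The invariance itself is immediate from the definition of $\alpha_\sigma$.
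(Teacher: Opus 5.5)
Your proposal is correct and follows the paper's argument. The paper just observes that the defining relation $\rho_p|_{\Gamma_L}=\alpha_\sigma\,\Phi({}^\sigma\rho_p|_{\Gamma_L})\,\alpha_\sigma^{-1}$ says exactly that every element of $\rho_p(\Gamma_L)$ is fixed by the twisted action $g\mapsto f_\sigma({}^\sigma g)$, hence lies in $H(F_p)=\hGd(E_p)^{tw_f(\Gamma)}$. Your extra care about well-definedness is fine; it is also immediate from Zariski density of ${}^\sigma\rho_p(\Gamma_L)$ in $\hGd$, which forces $f_\sigma$ to be uniquely determined by $f_\sigma\circ{}^\sigma\rho_p|_{\Gamma_L}=\rho_p|_{\Gamma_L}$, whatever lift of $\Phi_\sigma$ is chosen.
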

Note that by Proposition \cite[Proposition 2.6]{shavali2025image}, $H$ is a form of $\hGd$ and in particular is a semisimple group. Also, note that since $\Gamma_L$ is open in $\Gamma_K$, the Lie algebra of the $p$-adic Lie groups $\rho_p (\Gamma_K)$ and $\rho_p (\Gamma_L)$ are the same. Let $\mathfrak{g}$ be the Lie algebra (over $\QQ_p$) of $\rho_p (\Gamma_L)$ and $\mathfrak{h}$ be the Lie algebra of the algebraic group $H/F_p$, both viewed as Lie-subalgebras of $\mathrm{Lie}_{\QQ_p}(\hGd(E_p))$. Our next goal is to show that these two Lie algebras are in fact equal. 

\begin{prop}\label{LieAlgImage}
    With the notation as above, $\mathfrak{g}=\mathfrak{h}$.
\end{prop}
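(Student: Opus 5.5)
The inclusion $\mathfrak{g}\subseteq\mathfrak{h}$ is immediate from Corollary \ref{groupH}. Since $\rho_p(\Gamma_L)\subseteq H(F_p)$ and $H(F_p)$ is a $p$-adic Lie group with Lie algebra $\mathfrak{h}$ (viewed over $\QQ_p$ by restriction of scalars), the closed subgroup $\rho_p(\Gamma_L)$ has Lie algebra contained in $\mathfrak{h}$. So the content is the reverse inclusion. I would follow Ribet's strategy, as in \cite{shavali2025image} for $\GL_3$: extend scalars to $\overline{\QQ_p}$, identify both sides as products of copies of $\widehat{\mathfrak{g}}^{\mathrm{der}}$, and compare via a Lie-algebra Goursat argument.

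First, I compute $\mathfrak{h}\otimes_{\QQ_p}\overline{\QQ_p}$. Since $H$ is an $F_p$-form of $\hGd$ (by \cite[Proposition 2.6]{shavali2025image}), we have $\mathfrak{h}\otimes_{\QQ_p}\overline{\QQ_p}\simeq\prod_{\nu:F\hookrightarrow\overline{\QQ_p}}\widehat{\mathfrak{g}}^{\mathrm{der}}(\overline{\QQ_p})$. Inside
\[
\mathrm{Lie}(\hGd(E_p))\otimes\overline{\QQ_p}=\prod_{\lambda:E\hookrightarrow\overline{\QQ_p}}\widehat{\mathfrak{g}}^{\mathrm{der}},
\]
the factor indexed by $\nu$ is embedded twisted-diagonally into the factors $\lambda$ with $\lambda|_F=\nu$. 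The twisting is by the maps $\mathrm{ad}(\alpha_\sigma)\circ\Phi_\sigma$ coming from the cocycle $f$.

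Second, I analyze $\mathfrak{g}_{\overline{\QQ_p}}:=\mathfrak{g}\otimes\overline{\QQ_p}$. It lies inside $\mathfrak{h}_{\overline{\QQ_p}}$, and its projection to each factor $\lambda$ is the Lie algebra of $\rho_\lambda(\Gamma_L)$. By validity condition (4), that projection is all of $\widehat{\mathfrak{g}}^{\mathrm{der}}$. Consequently, the projection of $\mathfrak{g}_{\overline{\QQ_p}}$ onto each $\nu$-factor of $\mathfrak{h}_{\overline{\QQ_p}}$ is surjective.

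To conclude that $\mathfrak{g}_{\overline{\QQ_p}}$ is the whole product over $\nu$, I use Goursat's lemma for semisimple Lie algebras, in Ribet's form. A subalgebra of a product of copies of $\widehat{\mathfrak{g}}^{\mathrm{der}}$ that surjects onto each factor is the whole product, unless two factors $\nu\neq\nu'$ are linked by an isomorphism $\theta$ between their simple components. Here $\widehat{\mathfrak{g}}^{\mathrm{der}}$ may be non-simple, as for $\mathfrak{so}_4$, so this must be done simple-ideal by simple-ideal, or with the group $\hGd$ directly.

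The main obstacle is ruling out such a linkage. Suppose $\theta$ is an isomorphism linking the $\lambda$- and $\mu$-factors with $\lambda|_F\neq\mu|_F$. Every automorphism of $\widehat{\mathfrak{g}}^{\mathrm{der}}$ is inner composed with an outer automorphism coming from $\mathrm{Out}(\widehat{G})$. I would try to integrate $\theta$: after replacing $L$ by a finite Galois extension $L'/K$ with connected image, it gives $\rho_\lambda|_{\Gamma_{L'}}=\Phi(\rho_\mu)|_{\Gamma_{L'}}$ up to conjugation and up to a central-valued function. That function is a character of $\Gamma_{L'}$, and its value group is finite since the images lie in $\hGd$. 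Enlarging $L'$ to kill it, I obtain $\rho_\lambda|_{\Gamma_{L'}}\simeq\Phi(\rho_\mu)|_{\Gamma_{L'}}$ in $\GL_n$. Lemma \ref{isoRep} then forces $\lambda|_F=\mu|_F$, a contradiction.

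The delicate points are two. First, the passage from a Lie-algebra isomorphism to a group-level relation between the representations, which needs the connectedness and the centre bookkeeping above. Second, when $\widehat{\mathfrak{g}}^{\mathrm{der}}$ is not simple (the $\mathfrak{so}_4$ case), a linkage between single simple ideals does not immediately give a full automorphism of $\widehat{\mathfrak{g}}^{\mathrm{der}}$. For that case I would argue using the irreducibility of the standard representation, to show that partial linkages assemble into a full one.

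Once linkages are excluded, the dimensions over $\overline{\QQ_p}$ agree, so $\mathfrak{g}=\mathfrak{h}$.
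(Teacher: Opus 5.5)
Your route is the paper's: pass to $\overline{\QQ_p}$, identify $\mathfrak{h}\otimes\overline{\QQ_p}$ with $\prod_{\nu:F\hookrightarrow\overline{\QQ_p}}\widehat{\mathfrak{g}}^{\mathrm{der}}$, reduce by Ribet's lemma to pairs of factors, and exclude a graph via Goursat together with Lemma~\ref{isoRep}. This is correct when $\widehat{\mathfrak{g}}^{\mathrm{der}}$ is simple, i.e.\ in the $\mathfrak{sl}_4$ and $\mathfrak{sp}_4$ cases. Your step integrating a Lie-algebra linkage to an isomorphism on an open normal subgroup just expands the paper's one-line step.

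The gap is the step you defer for $\mathfrak{so}_4$. Partial linkages between simple ideals do \emph{not} assemble into a full one, and irreducibility of $\mathrm{std}_2\otimes\mathrm{std}_2$ cannot force them to, because the statement is false there.

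Take $K=\QQ$, let $f$ be the newform of a non-CM elliptic curve over $\QQ$, and let $g$ be a non-CM weight-$4$ newform with trivial nebentypus, real quadratic coefficient field $E$, and no inner twists. Let $\rho_p=\rho_{f,p}\otimes\rho_{g,p}$, twisted as in Lemma~\ref{kill_det}; this is the Galois side of $\pi=f\boxtimes g$. It is valid for $\mathrm{GSO}_4$: each $\rho_\lambda$ has Lie algebra $\mathfrak{so}_4$, since the weights of $f$ and $g$ differ.

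A non-trivial $\sigma\in\mathrm{Aut}(E)$ gives an extra-twist, with either $\Phi$, only if ${}^\sigma\rho_g$ is a twist of $\rho_g$ on an open subgroup. Matching $\rho_f$ with a twist of ${}^\sigma\rho_g$ is impossible because the Hodge--Tate weights differ. Hence $\Gamma=1$, $F=E$, and $\dim_{\QQ_p}\mathfrak{h}=12$.

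However, $\rho_{f,p}$ is $\GL_2(\QQ_p)$-valued. So for the two embeddings $\lambda,\mu$ of $E$, the $f$-factors of $\rho_\lambda$ and $\rho_\mu$ literally coincide, while the $g$-factors are independent. Thus $\mathfrak{g}\simeq\mathfrak{sl}_2(\QQ_p)\times\mathfrak{sl}_2(E_p)$ has dimension $9$. The pair projection is $\{((a,b),(a,c))\}$, which is neither the whole product nor a graph. Lemma~\ref{isoRep} gives no contradiction, since $\rho_\lambda$ and $\Phi(\rho_\mu)$ are not isomorphic on any open subgroup; otherwise ${}^\sigma g$ would be a twist of $g$.

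So no argument can close this case. The paper's proof has the same hole: it simply asserts that $\widehat{\mathfrak{g}}^{\mathrm{der}}$ is simple. Consequently the proposition, and with it Theorem~\ref{mainOrthogonal}, is only established for simple $\widehat{\mathfrak{g}}^{\mathrm{der}}$. For $\mathrm{GSO}_4$ one has to track twists of the two $\GL_2$-factors separately.
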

\begin{proof}
    First note that $\mathfrak{g} \subseteq \mathfrak{h}$ by the last corollary. Since $\mathfrak{h}$ is semisimple, it suffices to prove that $\overline{\mathfrak{g}^{\mathrm{der}}} = \mathfrak{g}^{\mathrm{der}} \otimes_{\QQ_p} \overline{\QQ_p}$ is equal to $\overline{\mathfrak{h}} = \mathfrak{h} \otimes_{\QQ_p} \overline{\QQ_p}$.
\\
    For every embedding $\sigma : F \hookrightarrow \overline{\QQ_p}$, fix an extension $\widetilde{\sigma} : E \hookrightarrow \overline{\QQ_p}$ of $\sigma$. All of the other extensions of $\sigma$ can be obtained by composing with different elements of the Galois group $\Gamma=\mathrm{Gal}(E/F)$, i.e. are of the form $\widetilde{\sigma}\tau$ for $\tau \in \Gamma$. Now we base change our representation $\rho_p|_{\Gamma_L}$
    to $\overline{\QQ_p}$ to get: 
\begin{center}
\begin{tikzcd}
\rho_p:\Gamma_L \arrow[r] \arrow[rd] & \hGd(E_p) \arrow[r, hook]        &  \hGd(\overline{E_p}) \arrow[r, "="]       & \mathrm{Res}^E_F(\hGd)(\overline{F_p})                              \\
                                & H(F_p) \arrow[r, hook] \arrow[u, hook] & H(\overline{F_p}) \arrow[r, "="] \arrow[u, hook] & \mathrm{Res}^E_F(\hGd)^{tw(\Gamma)}(\overline{F_p}) \arrow[u, hook]
\end{tikzcd}
\end{center}
\bigbreak
 \noindent where $\overline{E_p}:=E_p \otimes_{\QQ_p} \overline{\QQ_p}$ and
  $\overline{F_p}:=F_p \otimes_{\QQ_p} \overline{\QQ_p} = \prod_{\sigma : F \hookrightarrow \overline{\QQ_p}}\overline{\QQ_p}$. Note that we have
    
    $$
    \mathrm{Res}^E_F(\hGd)(\overline{F_p}) \simeq
    \prod_{\sigma:F \hookrightarrow \overline{\QQ_p}} \hGd (E \otimes_{F,\sigma}\overline{\QQ_p}) \simeq
    \prod_{\sigma:F \hookrightarrow \overline{\QQ_p}} \hGd (E \otimes_F E \otimes_{E,\widetilde{\sigma}}\overline{\QQ_p}) 
    $$
    $$
    \simeq 
    \prod_{\sigma:F \hookrightarrow \overline{\QQ_p}} \prod_{\Gamma} \hGd(\overline{\QQ_p}) \simeq
    \prod_{\lambda:E \hookrightarrow \overline{\QQ_p}} \hGd(\overline{\QQ_p}),
    $$
\bigbreak
   \noindent where $\lambda = \widetilde{\sigma}\tau$ for $\tau \in \Gamma$. By \cite[Proposition 2.6]{shavali2025image}, projection to the id-component of $\Gamma$ gives the isomorphism of the form $\mathrm{Res}^E_F(\hGd)^{tw(\Gamma)}$ of $\hGd$ with $\hGd$ over $E_p$. Therefore, we have: 
\begin{center}
\begin{tikzcd}
\rho_p:\Gamma_L \arrow[r] \arrow[rd] & \hGd(E_p) \arrow[r, hook]        & \prod_{\sigma:F \hookrightarrow \overline{\QQ_p}} \prod_{\Gamma} \hGd(\overline{\QQ_p}) \arrow[r, "="] & \prod_{\lambda:E \hookrightarrow \overline{\QQ_p}} \hGd(\overline{\QQ_p}) \\
                                & H(F_p) \arrow[r, hook] \arrow[u, hook] & \prod_{\sigma:F \hookrightarrow \overline{\QQ_p}}  \hGd(\overline{\QQ_p}) \arrow[u, hook]              &                                                                                   
\end{tikzcd}
\end{center}
\bigbreak
\noindent  For each embedding $\sigma : F \hookrightarrow \overline{\QQ_p}$
    the composition
    $$
    \rho_{\sigma}: \Gamma_{L} \rightarrow
    H(F_p) \hookrightarrow 
    \prod_{\sigma : F \hookrightarrow \overline{\QQ_p}} \hGd(\overline{\QQ_p})
    \xrightarrow{\mathrm{pr}_\sigma}
    \hGd(\overline{\QQ_p})
    $$
    corresponded to the representations $\rho_\lambda$ of $\Gamma_L$ for some embedding $\lambda:E \hookrightarrow \overline{\QQ_p}$ extending $\sigma$. Note that by Lemma \ref{isoRep}, these $\rho_\sigma$'s does not become isomorphic even after applying any outer automorphism and after any finite extension. This is the main point of the rest of the argument. 
\\
    On the level of the Lie algebras, this gives the embedding
    $$\overline{\mathfrak{g}^{\mathrm{der}}} \subseteq \overline{\mathfrak{h}} \xrightarrow{\simeq}  \prod_{\sigma : F \hookrightarrow \overline{\QQ_p}} \widehat{\mathfrak{g}}^{\mathrm{der}}(\overline{\QQ_p}).$$
    Let
    $\mathfrak{g}^{{\mathrm{der}}}_{\sigma} \subseteq \widehat{\mathfrak{g}}^{\mathrm{der}}(\overline{\QQ_p})$  be the projection of $\overline{\mathfrak{g}^{\mathrm{der}}}$ to the $\sigma$-component of the above map. This is the $\overline{\QQ_p}$-Lie algebra of the image of the representation $\rho_\sigma$ (=$\rho_\lambda$ for some $\lambda$ extending $\sigma$), so by our assumption on $\rho_p$ being valid, we have $\mathfrak{g}^{{\mathrm{der}}}_{\sigma} = \widehat{\mathfrak{g}}^{\mathrm{der}}(\overline{\QQ_p})$. 
\\
    Now we can apply \cite[Lemma 4.6]{ribet1980twists} to
    $$\overline{\mathfrak{g}^{\mathrm{der}}} \subseteq \overline{\mathfrak{h}} \rightarrow  \prod_{\sigma : F \hookrightarrow \overline{\QQ_p}} \widehat{\mathfrak{g}}^{\mathrm{der}}(\overline{\QQ_p}).$$
    We only need to prove that for every $\sigma, \tau : F \hookrightarrow \overline{\QQ_p}$ the projections 
    $(\mathrm{pr}_\sigma \times \mathrm{pr}_\tau) (\overline{\mathfrak{g}^{\mathrm{der}}})$ and $(\mathrm{pr}_\sigma \times \mathrm{pr}_\tau) (\overline{\mathfrak{h}})$ are equal. We follow the arguments of \cite[\S6.2]{serre1972proprietes}.
    
    Clearly it is enough to show that
    $(\mathrm{pr}_\sigma \times \mathrm{pr}_\tau) (\overline{\mathfrak{g}^{\mathrm{der}}}) = \widehat{\mathfrak{g}}^{\mathrm{der}}(\overline{\QQ_p}) \times \widehat{\mathfrak{g}}^{\mathrm{der}}(\overline{\QQ_p})$. Note that the first factor corresponds to the representation $\rho_{\sigma}$
    and the second to $\rho_{\tau}$. By the Lie algebra version of the Goursat's theorem \cite[Lemma 5.2.1]{ribet1976galois}, if $(\mathrm{pr}_\sigma \times \mathrm{pr}_\tau) (\overline{\mathfrak{g}^{\mathrm{der}}})$ is not equal to $\widehat{\mathfrak{g}}^{\mathrm{der}}(\overline{\QQ_p}) \times \widehat{\mathfrak{g}}^{\mathrm{der}}(\overline{\QQ_p})$
    then it has to be the graph of an isomorphism. Let us call this automorphism
    $\phi:\widehat{\mathfrak{g}}^{\mathrm{der}}(\overline{\QQ_p}) \rightarrow \widehat{\mathfrak{g}}^{\mathrm{der}}(\overline{\QQ_p})$. Since $\widehat{\mathfrak{g}}^{\mathrm{der}}$ is simple, the group of its outer automorphisms is the group of automorphisms of $\hG$ and hence $\phi$ is in the class of some $\Phi \in \mathrm{Out}(\hG)$. Fixing a lift $\tilde{\Phi} \in \mathrm{Aut}(G)$, the automorphism $\phi$ only differ by it up to conjugation. 
    In other words, we have the following diagram:

\begin{center} 
\begin{tikzcd}
                                     &  & \widehat{\mathfrak{g}}^{\mathrm{der}}(\overline{\QQ_p}) \arrow[dd, "\phi=ad(\alpha)\circ \tilde{\Phi}"] \\
\overline{\mathfrak{g}} \arrow[rru, "pr_{\sigma}"] \arrow[rrd, "pr_{\tau}"] &  &                                                 \\
                                     &  & \widehat{\mathfrak{g}}^{\mathrm{der}}(\overline{\QQ_p})                               
\end{tikzcd}
\end{center}  
which means that $\rho_{\sigma}$ and $\Phi(\rho_{\tau})$ are isomorphic as representations of $\overline{\mathfrak{g}}$. This implies that they are indeed isomorphic as representations of some open subgroup of $\Gamma_L$, which contradicts Lemma \ref{isoRep}. This implies the result. 
\end{proof}

Now, applying \cite[ Proposition 18.17]{schneider2011p} to the identity map $\mathfrak{g} \rightarrow \mathfrak{h}$ coming from Proposition \ref{LieAlgImage} gives the following result.

\begin{cor}\label{bigImCor}
    The image of $\Gamma_L$ under $\rho_p$ is an open subgroup of the $p$-adic Lie group $H(F_p)$.
\end{cor}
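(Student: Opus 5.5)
The plan is to deduce the statement directly from Proposition \ref{LieAlgImage} together with the standard correspondence between $p$-adic Lie subgroups and their Lie algebras. By Corollary \ref{groupH}, $\rho_p(\Gamma_L)$ is a subgroup of $H(F_p)$. I would first record that $\rho_p(\Gamma_L)$ is a compact, hence closed, subgroup of the $p$-adic Lie group $H(F_p)$. This holds because $\Gamma_L$ is profinite and $\rho_p$ is continuous. By Cartan's closed subgroup theorem, $\rho_p(\Gamma_L)$ is then a $p$-adic Lie subgroup of $H(F_p)$, and its $\QQ_p$-Lie algebra is the $\mathfrak{g}$ of the previous proposition. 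Note that $H(F_p)=(\mathrm{Res}^{F_p}_{\QQ_p}H)(\QQ_p)$ is the group of $\QQ_p$-points of a smooth algebraic group, so it is a $p$-adic Lie group with Lie algebra $\mathfrak{h}$. Both $\mathfrak{g}$ and $\mathfrak{h}$ sit inside $\mathrm{Lie}_{\QQ_p}(\hGd(E_p))$, and the inclusion $\mathfrak{g}\subseteq\mathfrak{h}$ is the differential of the inclusion of groups.

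Next I would invoke Proposition \ref{LieAlgImage}, which says this inclusion $\mathfrak{g}\hookrightarrow\mathfrak{h}$ is an equality. The differential of the inclusion homomorphism $\iota:\rho_p(\Gamma_L)\hookrightarrow H(F_p)$ is therefore bijective. A homomorphism of $p$-adic Lie groups whose differential is surjective is open; this is the content of \cite[Proposition 18.17]{schneider2011p}. One can also argue via exponential maps: for a small enough open lattice $\Lambda\subset\mathfrak{h}$, the set $\exp(\Lambda)$ is an open neighborhood of the identity in $H(F_p)$. Because the Lie algebras agree, this neighborhood also coincides with an open neighborhood of the identity in $\rho_p(\Gamma_L)$, and so it is contained in $\rho_p(\Gamma_L)$.

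Finally, a subgroup containing an open neighborhood of the identity is open, since it is a union of translates of that neighborhood. This gives that $\rho_p(\Gamma_L)$ is open in $H(F_p)$.

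The only point that needs care, and which I expect to be the main obstacle, is making sure the two Lie algebras are compared inside the same ambient space over the same base field. One must check that the $\QQ_p$-Lie algebra of the $F_p$-group $H$, viewed via restriction of scalars, is exactly the $\mathfrak{h}$ used in Proposition \ref{LieAlgImage}. One must also check that the analytic structure on $H(F_p)$ is the subspace structure coming from $\hGd(E_p)$. Both follow because $H$ is the closed subgroup of $\mathrm{Res}^{E_p}_{F_p}\hGd$ fixed by the twisted action.
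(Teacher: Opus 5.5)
Your proposal is correct and follows essentially the same route as the paper. The paper simply applies \cite[Proposition 18.17]{schneider2011p} to the identity map $\mathfrak{g}\to\mathfrak{h}$ given by Proposition \ref{LieAlgImage}. Your additional remarks (the compact image is closed, Cartan's closed subgroup theorem, the exponential-map alternative, and the compatibility of the ambient Lie algebras) only spell out the standard details behind that citation.
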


\subsection{Application to automorphic Galois representations}
We can now apply the results of the previous section to the Galois representations attached to automorphic representations of $\GL_4$ and prove Theorem B. Let $K$ be a totally real field and $\pi$ be a RAC automorphic representation of $\GL_4(\AA_K)$ as usual. As we saw in Chapter \ref{LieClassification}, the three cases for which determining the image does not reduce to $\GL_1$ and $\GL_2$ are the primitive orthogonal case (when $\pi = \pi_1 \boxtimes \pi_2$ for non-dihedral $\pi_1$ and $\pi_2$), the primitive symplectic ($\mathrm{GSp}_4$) case, and the primitive ($\GL_4$) case. In the primitive symplectic case we will assume that $K=\QQ$. 

Let us first define extra-twists for automorphic representations. We will not actually need this, since we always work on the Galois side. Nevertheless, it would be mentally helpful to have in mind that these extra-twists come from the extra-twists on the automorphic side where one can probably compute them more easily. 
\begin{defi}
    Let $G/\QQ$ be a split reductive group and $\pi$ be an $L$-arithmetic automorphic representation of $G(\AA_K)$ in the sense of Definition \cite[3.1.3]{buzzard2014conjectural}. Let $E \supset \QQ(\pi)$ be a number field. An $E$-extra-twist of $\pi$ is a triple $(\sigma, \Phi,\chi)$ where $\sigma \in \mathrm{Aut}(E)$, $\Phi \in \mathrm{Out}(G)$, and $\chi$ is a character of the center of $G$, such that
    \[
    \pi \approx \Phi({}^\sigma \pi) \otimes \chi ,
    \]
    by which we mean that for all but finitely many unramified places $v$ of $K$, the Satake parameters of both sides agree at $v$.
\end{defi}
\begin{rem}
    Note that, for $G=\GL_n$, this actually implies 
    \[
    \pi = \Phi({}^\sigma \pi) \otimes \chi ,
    \]
    by strong multiplicity 1. This might not be true in general. Since we will always assume that $G$ admits an irreducible faithful representation and we will always transfer to $\GL_n$ to compare Galois representations by their traces, this does not matter for us. In general, if one wants to work with automorphic representations that are only cuspidal for $G$ but the transfer to $\GL_n$ is not cuspidal, then it might be better to define the extra-twists for $L$-packets rather than individual automorphic representations. 
\end{rem}

\subsubsection{Non-self-dual case}
Assume that $\pi$ is of primitive $\GL_4$ type, i.e. it is neither essentially self-dual, nor self-twisted. Let $\{\rho_{\pi,p}=\prod_{\fp|p}\rho_{\pi,\fp}\}_p$ be the compatible family of Galois representations attached to it. 
Then by Proposition \ref{nonself-dual}, the semisimple part of the Lie algebra of the image of each $\rho_{\pi,\lambda}$ is equal to $\mathfrak{sl}_4$. Therefore, by Lemma \ref{kill_det}, after a finite extension and a twist, $\rho_{\pi,\lambda}$ becomes valid in the sense of Definition \ref{valid}. Therefore, the image of $\rho_{\pi,p}$ is described by Corollary \ref{bigImCor}. 

Let us be more explicit. Every inner-twist of $\pi$ is obviously of the form $\pi={}^\sigma\pi \otimes \chi$. On the other hand, $\GL_4$ has one non-trivial outer automorphism given by $A \mapsto A^{-T}$. Therefore, an outer-twist of $\pi$ would be of the form $\pi = {}^\sigma\pi^\vee \otimes \chi$. As explained in the previous section, these extra-twists form a group $\Gamma$ and induce a twisted action (given by a 1-cocycle $f$) using which we defined 
$$H_p \coloneqq (\mathrm{Res}^{E_p}_{F_p} \mathrm{SL}_4)^{tw_f(\Gamma)}$$
where $F=E^\Gamma$. The group $H_p$ is a form of $\mathrm{SL}_4$ that splits over $E_p$. If the central character of $\pi$ is finite, then after a finite extension the image of $\rho_{\pi,p}$ is in $H_p(F_p)$ and is open in the $p$-adic topology. Otherwise, after a finite extension the image is in $\mathrm{Res}^{F_p}_{\QQ_p}H\cdot \mathbb{G}_{m,\QQ_p}$ and is open $p$-adically. In any case we deduce:
\begin{theo}\label{mainnonself-dual}
    Let $K$ be a totally real field and $\pi$ be a RAC automorphic representation of $\GL_4(\AA_k)$ that is not self-twisted. Assume that $\pi$ is not essentially self-dual. Then for every prime $p$, the (connected) semisimple part of the $\QQ_p$-Zariski closure of the image of $\rho_{\pi,p}$ is of the form $\mathrm{Res}^{F_p}_{\QQ_p} H_p$ where $H_p/F_p$ is a form of $\mathrm{SL}_4$. 
\end{theo}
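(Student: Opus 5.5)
The plan is to reduce Theorem \ref{mainnonself-dual} to Corollary \ref{bigImCor} applied to a suitably modified representation, and then pass back to $\Gamma_K$ and to the Zariski closure. Since $\pi$ is of primitive $\GL_4$ type, Proposition \ref{nonself-dual} gives $\mathfrak{g}_\lambda^{ss}=\mathfrak{sl}_4$ for every $\lambda$. We take $\hG=\GL_4$ with $\iota$ the standard representation; its center is $\GG_m$, $\hGd=\SL_4$, and $v=\det$. By Theorem \ref{FieldSplitsAll} we may realize $\rho_{\pi,p}$ with values in $\GL_4(E_p)$ for a number field $E\supseteq\QQ(\pi)$. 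By Lemma \ref{kill_det}, there are a finite Galois extension $K_1/K$ and a character $\mu$ of $\Gamma_{K_1}$ (an inverse fourth root of $\det\rho_{\pi,p}$) such that $\rho':=\rho_{\pi,p}|_{\Gamma_{K_1}}\otimes\mu$ lands in $\SL_4(E_p)$.

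Next we check validity (Definition \ref{valid}) for $\rho'$. Conditions (1) and (3) are immediate. For (4), twisting by a character and restricting to an open subgroup do not change the derived Lie algebra, and the Lie algebra of $\rho'_\lambda$ lies in $\mathfrak{sl}_4$; hence it equals $\mathfrak{sl}_4$.

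Condition (2), rationality of characteristic polynomials in $E$, is the delicate point, because $\mu$ is only a $p$-adic character. To handle it, I would choose $\mu$ more carefully. The determinant $\det\rho_{\pi,p}$ corresponds to the algebraic Hecke character $\omega_\pi$ (times a power of the cyclotomic character), so it is locally algebraic. After enlarging $E$ and $K_1$, I would take $\mu$ to be an $E$-rational compatible character whose fourth power is $(\det)^{-1}$ up to a finite-order character. Twisting by that finite-order character still preserves Lie algebras and only moves us inside the center. If a genuine fourth root is not available rationally, the fallback is to work with $\mathrm{ad}$-traces, or to use the traces of $\rho_p$ restricted to the kernel of $\det$ modulo scalars. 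I expect this normalization to be the main obstacle, together with the bookkeeping that $\Gamma$ and $F$ are then defined for the modified representation and agree with the extra-twists of $\pi$ (inner-twists $\pi={}^\sigma\pi\otimes\chi$ and outer-twists $\pi={}^\sigma\pi^\vee\otimes\chi$).

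Granting validity, Lemma \ref{properties} gives the group $\Gamma=\mathrm{Gal}(E/F)$ and the cocycle $f$. Corollary \ref{groupH} and Proposition \ref{LieAlgImage} then give $\rho'(\Gamma_L)\subseteq H_p(F_p)$ with equal Lie algebras, and Corollary \ref{bigImCor} shows the image is open in $H_p(F_p)$. Here $H_p=(\mathrm{Res}^{E_p}_{F_p}\SL_4)^{tw_f(\Gamma)}$ is a form of $\SL_4$ by \cite[Proposition 2.6]{shavali2025image}. Since $H_p(F_p)$ is Zariski dense in the connected group $\mathrm{Res}^{F_p}_{\QQ_p}H_p$, an open subgroup is Zariski dense as well. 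Finally, $\rho_{\pi,p}(\Gamma_{L})$ differs from $\rho'(\Gamma_L)$ only by scalars $\mu^{-1}$, and $\Gamma_L$ has finite index in $\Gamma_K$. It follows that the identity component of the $\QQ_p$-Zariski closure of $\rho_{\pi,p}(\Gamma_K)$ is contained in $\mathrm{Res}^{F_p}_{\QQ_p}H_p\cdot Z$ with $Z$ central, and it contains $\mathrm{Res}^{F_p}_{\QQ_p}H_p$. Its derived (semisimple) part is therefore exactly $\mathrm{Res}^{F_p}_{\QQ_p}H_p$.
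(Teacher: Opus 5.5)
Your outline matches the paper's argument step for step: Proposition \ref{nonself-dual}, then Lemma \ref{kill_det} to twist into $\SL_4$ over a finite extension, then Corollary \ref{bigImCor}, then a return to $\Gamma_K$ modulo central scalars. The paper simply asserts that the twisted representation is valid. You are right that condition (2) of Definition \ref{valid} is where an argument is needed. However, your primary fix cannot work, so as written (``granting validity'') your proof has a gap exactly there.

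Since $K$ is totally real, $\det\rho_{\pi,p}=\epsilon\,\chi_{\mathrm{cyc}}^{m}$, where $\epsilon$ has finite order and $m$ is, up to sign, the sum of the Hodge--Tate weights. An $E$-rational character is locally algebraic (Theorem \ref{SerreLocAlg}), so it has integral Hodge--Tate weights. Hence $\mu^4\det\rho_{\pi,p}$ of finite order forces $4\mid m$, which fails e.g.\ for Hodge--Tate weights $0,1,2,3$, where $m=\pm 6$.

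In fact, when $4\nmid m$, \emph{no} twist into $\SL_4$ satisfies (2). Consider the positive-density set of degree-one primes $w$ of $K_1$ with $\mathrm{tr}\,\rho_{\pi,p}(\mathrm{Fr}_w)\neq 0$; it has positive density because the trace is near $4$ on a small open subgroup, so Chebotarev applies. At such $w$, condition (2) forces the twisting character's value at $\mathrm{Fr}_w$ to lie in $E$. That value is a fourth root of $\zeta\,\ell^{\mp m}$, with $\ell$ the residue characteristic and $\zeta$ a root of unity of bounded order. A fixed cyclotomic extension of $E$ would then contain $\sqrt{\ell}$ for infinitely many $\ell$, which is impossible.

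So your ad-trace fallback is not optional; it is the actual proof, and it goes through. Keep $\hG=\GL_4$ and your $\rho'$, which satisfies (1), (3) and (4). Condition (2) enters the extra-twist machinery only through Lemmas \ref{cent} and \ref{isoRep}. Lemma \ref{properties}, the cocycle, Corollary \ref{groupH} and Proposition \ref{LieAlgImage} use it only via these two lemmas. Rerun them over $L'$, as in the paper, with
\[
b_v=\mathrm{tr}\,\rho'(\mathrm{Fr}_v)\,\mathrm{tr}\,\rho'(\mathrm{Fr}_v^{-1})=\mathrm{tr}\,\rho_{\pi,p}(\mathrm{Fr}_v)\,\mathrm{tr}\,\rho_{\pi,p}(\mathrm{Fr}_v^{-1})\in E,
\]
which is unchanged both by the twist and by $A\mapsto A^{-T}$. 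The argument then runs as follows.
\begin{enumerate}
\item Extra-twists fix every $b_v$.
\item Conversely, if $\tau$ fixes all $b_v$, Brauer--Nesbitt gives ${}^\tau\mathrm{ad}\,\rho'\simeq\mathrm{ad}\,\rho'$.
\item By Zariski density, the intertwiner normalizes $\mathrm{ad}(\SL_4)$, so $\rho'$ and $\Phi({}^\tau\rho')$ differ by a character of order dividing $4$.
\item A single $\Phi$ works for all $\fp\mid p$. This follows by comparing the $E$-valued invariants $\mathrm{tr}\,\rho'(\mathrm{Fr}_v)^4=\mathrm{tr}\,\rho_{\pi,p}(\mathrm{Fr}_v)^4/\det\rho_{\pi,p}(\mathrm{Fr}_v)$, since $\rho'$ is not self-dual on any open subgroup.
\item Lemma \ref{charTwist}, applied on an open normal subgroup killing that character, then yields an extra-twist.
\end{enumerate}
Thus $F=\QQ(\{b_v\})$, and Lemma \ref{isoRep} holds with $b_v$ in place of $\sum_\Phi a_{v,\Phi}$. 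The rest of your argument then produces the form $H_p$ of $\SL_4$.

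Two minor points in your last paragraph. First, an open subgroup of $H_p(F_p)$ need not have finite index, so deduce its Zariski density from its Lie algebra instead. Second, the inclusion $\mathrm{Res}^{F_p}_{\QQ_p}H_p\subseteq G^\circ$ is cleanest via derived groups, which do not see the central scalars.
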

Let $\Fi/F$ be the field fixed by the inner-twists of $\pi$. Since $\SL_4$ only has one outer-twist, $[\Fi:F]=1$ or $2$. The group $H_p$ becomes an inner form of $\SL_4$ after base change to $\Fi\otimes \QQ_p$. Therefore, if $F=\Fi$ or $\Fi \otimes \QQ_p=F_p \times F_p$, then $H_p$ is an inner-form of $\SL_4$. Otherwise it is an inner form of $\mathrm{SU}_4$. \\
A well-known result of Larsen implies that the image of the Galois representation in the universal cover of the adjoint of the monodromy group is hyperspecial maximal compact for a density one set of primes. 
Since $H_p$ is a form of $\SL_4$, it is simply connected and therefore the image in $(\mathrm{Res}^{F_p}_{\QQ_p}H_p)(\QQ_p)$ is hyperspecial maximal compact. In particular, for such primes $p$, the group $\mathrm{Res}^{F_p}_{\QQ_p}H_p$ is unramified, and in particular quasi-split. Therefore, every fiber of $H_p$ is quasi-split (note that $H_p$ is defined over $\mathrm{Spec}(F_p)= \bigsqcup_{\fp|p}\mathrm{Spec}(F_\fp)$). This implies that for a density 1 set of primes $\mathcal{P}$, each fiber of $H$ is either $\SL_4$ or $\mathrm{SU}_4$, and the image in each fiber is a hyperspecial maximal compact. Therefore, in each fiber the residual image either contains $\SL_4(\FF_p)$ or $\mathrm{SU}_4(\FF_p)$. In particular, if $H_p$ is an inner-form (for instance if $F^{\mathrm{inn}}_p=F_p \times F_p$) and $p \in \mathcal{P}$, then the residual image of $\rho_{\pi,\lambda}$ contains $\SL_4(\FF_p)$. 

We say that $p$ splits completely in the extension $\Fi/F$ when $\Fi \otimes \QQ_p = F_p \times F_p$. Let us summarize the discussion above:
\begin{theo}\label{residualBigImage}
    Let $K$ be totally real and $\pi$ be a RAC automorphic representation of $\GL_4(\AA_K)$ that is of primitive type. Then there exists a density 1 set of primes $\mathcal{P}$ such that for every primes $p \in \mathcal{P}$ and each finite place $\lambda$ of $\QQ(\pi)$ above $p$, the image of $\overline{\rho_{\pi,\lambda}}$ contains either $\SL_4(\FF_p)$ or $\mathrm{SU}_4(\FF_p)$. Moreover, if $p$ splits completely in $\Fi/F$ then the image of $\overline{\rho_{\pi,\lambda}}$ contains $\SL_4(\FF_p)$. 
\end{theo}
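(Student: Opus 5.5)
The proof assembles the preceding discussion into a single argument. The plan has four steps, carried out in this order.

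\textbf{Step 1: reduce to the valid setting.} By Proposition \ref{nonself-dual}, for every $\lambda$ the semisimple part of the Lie algebra of $\rho_{\pi,\lambda}(\Gamma_K)$ is $\mathfrak{sl}_4$. I choose a finite Galois extension $K_1/K$ and a character $\mu$ as in Lemma \ref{kill_det}, and enlarge $E\supseteq\QQ(\pi)$ via Theorem \ref{FieldSplitsAll}. Then $\rho:=\rho_{\pi,p}|_{\Gamma_{K_1}}\otimes\mu$ is a valid $\SL_4$-valued representation with coefficients in $E$. One point needs care here: after twisting by $\mu$, the Frobenius characteristic polynomials may no longer lie in $E$. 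I would handle this by replacing traces with traces in the adjoint representation, or by working with $\mathrm{PGL}_4$-type data; alternatively, one can use that the twist is by an $E$-rational Hecke character up to a finite character.

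\textbf{Step 2: build $H_p$ and obtain openness.} I form the extra-twist group $\Gamma$ and the field $F=E^\Gamma$. Lemma \ref{cent} identifies $F$ with the field generated by $\mathrm{tr}\rho(\mathrm{Fr}_v)+\mathrm{tr}\rho^\vee(\mathrm{Fr}_v)$. Corollary \ref{groupH} and Proposition \ref{LieAlgImage} then give the group $H_p=(\mathrm{Res}^{E_p}_{F_p}\SL_4)^{tw_f(\Gamma)}$, and Corollary \ref{bigImCor} shows that $\rho(\Gamma_L)$ is open in $H_p(F_p)$. Since $H_p$ is simply connected, the preimage of the adjoint image in the simply connected cover is $\rho(\Gamma_L)$ itself, up to the finite center.

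\textbf{Step 3: apply Larsen's theorem.} I apply \cite[Theorem 3.17]{larsen1995maximality} to the compatible family $\rho^{\mathrm{ad}}$, exactly as in the proof of Theorem \ref{localGlobal}. This gives a density one set $\mathcal{P}$ such that, for $p\in\mathcal{P}$, the image in $(\mathrm{Res}^{F_p}_{\QQ_p}H_p)(\QQ_p)$ is a hyperspecial maximal compact subgroup. In particular $\mathrm{Res}^{F_p}_{\QQ_p}H_p$ is unramified, so each fiber $H_\fp$ over $F_\fp$ is quasi-split and split by an unramified extension. For $p$ large, the only such forms of $\SL_4$ are $\SL_4$ and the unramified $\mathrm{SU}_4$.

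The key step, and the main obstacle, is then to show that the hyperspecial group $\mathcal{H}_\fp(\mathcal{O}_{F_\fp})$ contains the image of $\Gamma_L$. Since the image is open and Larsen gives equality with a hyperspecial subgroup, this should hold for $p\in\mathcal{P}$. Reducing modulo $p$, the residual image of $\Gamma_L$ in the $\fp$-component is then $\mathcal{H}_\fp(k_\fp)$. This group is $\SL_4(k_\fp)$ or $\mathrm{SU}_4(k_\fp)$, and both contain $\SL_4(\FF_p)$ or $\mathrm{SU}_4(\FF_p)$. Projecting to $\overline{\rho_{\pi,\lambda}}$ for $\lambda$ above $\fp$, and undoing the twist by $\mu$ (which is central, so it does not affect containment of the derived subgroup), gives the first claim. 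I will also need to discard finitely many $p$, namely those dividing the index bound of \cite[Corollary 2.5]{hui2020maximality} and those ramified in $E$ or $L$.

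\textbf{Step 4: the split case.} If $p$ splits completely in $\Fi/F$, then the outer part of the cocycle $f$ becomes trivial over each $F_\fp$. Hence $H_\fp$ is an inner form of $\SL_4$. Being quasi-split, it must equal $\SL_4$, so the residual image contains $\SL_4(\FF_p)$.
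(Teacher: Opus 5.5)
Your proposal follows the paper's argument essentially step for step: twist to a valid $\SL_4$-valued representation via Lemma~\ref{kill_det}, get open image in $H_p(F_p)$, use Larsen's theorem to get a hyperspecial image (so $\mathrm{Res}^{F_p}_{\QQ_p}H_p$ is unramified and each fiber is $\SL_4$ or an unramified $\mathrm{SU}_4$), and use ``a quasi-split inner form of $\SL_4$ is split'' for the second claim. Your remarks on trace rationality after the twist, on the finite center in Larsen's preimage, and on undoing the twist make explicit points the paper passes over. Of your fixes for rationality, though, only the adjoint/$\mathrm{PGL}_4$ one works in general, since a fourth root of $\det\rho_{\pi,\lambda}$ need not be Hodge--Tate. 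One caveat is shared with the paper's proof: if $[k_\fp:\FF_p]$ is even and the fiber at $\fp$ is unitary, then $\mathrm{SU}_4(k_\fp)$ contains neither $\SL_4(\FF_p)$ nor $\mathrm{SU}_4(\FF_p)$. For example, for $k_\fp=\FF_{p^2}$ with $p$ odd, a prime factor $\neq 3$ of $p^2+p+1$ (resp.\ $p^2-p+1$) divides $|\SL_4(\FF_p)|$ (resp.\ $|\mathrm{SU}_4(\FF_p)|$) but not $|\mathrm{SU}_4(\FF_{p^2})|=p^{12}(p^4-1)(p^6+1)(p^8-1)$. So your step ``both contain $\SL_4(\FF_p)$ or $\mathrm{SU}_4(\FF_p)$'' only holds for odd residue degree, and in general the conclusion should be stated with the residue field $k_\fp$ of $F$ in place of $\FF_p$.
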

\begin{rem}\label{residualRemark}
    If one know the local-global compatibility of $\rho_{\pi,\lambda}$ at $\ell=p$, then the arguments in \cite{hui2016type} for type A representations (instead of \cite{larsen1995maximality}) show that one can improve Theorem \ref{residualBigImage} from density 1 to all but finitely many primes, and even prove an adelic big image theorem. 
\end{rem}
\begin{rem}
    If one knows explicit examples of primitive automorphic representations of $\GL_4$, the above result could imply new cases of the inverse Galois problem for $\SL_4(\FF_p)$. As far as we know,  there is no explicit example of such automorphic representations available in the literature. The reason seems to be that the minimal conductor of such an example needs to be highly ramified at small primes, so the computations become tedious. See \cite{ash2008cohomology} for a discussion on this.   
\end{rem}
\begin{exmp}
    Even though we do not know an explicit example of a primitive automorphic representation for $\GL_4$, there is an explicit example of a 4-dimensional compatible family of Galois representations that should correspond to such an automorphic representation \cite{scholten1999non}. One can directly apply our big image result to this family. In fact, this family has been studied by Dieulefait and Vila. In this case, Theorem \ref{residualBigImage} (together with Remark \ref{residualRemark}) recovers \cite[Theorem 7.3]{dieulefait2008geometric}. 
\end{exmp}

\subsubsection{Orthogonal case}
Let  $\pi = \pi_1 \boxtimes \pi_2$ for non-dihedral cuspidal automorphic representations $\pi_1$ and $\pi_2$ and let $\{\rho_{\pi,p}\}_p$ be the compatible family of Galois representations attached to it as usual. 
In other words, $\pi$ comes from the transfer from the split $G=\mathrm{GSpin}_4$ to $\GL_4$. Therefore, the Galois representation attached to it has values in the Langlands dual $\hG=\mathrm{GSO_4}$:
\[
\rho_{\pi,p}:\Gamma_K \rightarrow \mathrm{GSO_4}(E_p) \subset \GL_4(E_p).
\]
By Proposition \ref{primitiveOrthogonal}, the semisimple part of the  Lie algebra of the image of each $\rho_{\pi,\lambda}$ is $\mathfrak{so}_4$ which is equal to $\mathrm{Lie}_{\overline{\QQ}_p}(\rho_{\pi,\lambda}(\Gamma_K))^{ss}$. Therefore, by Lemma \ref{kill_det}, after a finite extension and a twist, $\rho_{\pi,\lambda}$ becomes valid in the sense of Definition \ref{valid} for the group $\hG=\mathrm{GSO_4}$. Therefore, the image of $\rho_{\pi,p}$ is described by Corollary \ref{bigImCor}. 

Let us be more explicit. It is easy to check that an inner-twist $(\sigma,\chi)$ of $\pi$ consists of two inner-twists
$$\pi_1={}^\sigma\pi_1\otimes \chi_1$$
and
$$\pi_2={}^\sigma\pi_2\otimes \chi_2$$
of $\pi_1$ and $\pi_2$, such that $\chi_1 \chi_2 = \chi$. On the other hand, the outer automorphism of $\pi$ switches $\pi_1$ and $\pi_2$. Therefore, an outer-twist $(\sigma,(\cdot)^{-T},\chi)$ is given by 
$$\pi_1={}^\sigma\pi_2\otimes \chi_1$$
and
$$\pi_2={}^\sigma\pi_1\otimes \chi_2,$$
such that $\chi_1 \chi_2 = \chi$. Note that one immediately sees that in this case $(\sigma^2,\chi\cdot{}^\sigma\chi)$ is an inner-twist. As explained in the previous section, these extra-twists form a group $\Gamma$ and induce a twisted action (given by a 1-cocycle $f$) using which we define 
$$H_p \coloneqq (\mathrm{Res}^{E_p}_{F_p} \mathrm{SO}_4)^{tw_f(\Gamma)}$$
where $F=E^\Gamma$. The group $H_p$ is a form of $\mathrm{SO}_4$ that splits over $E_p$. If the central character of $\pi$ is finite, then after a finite extension the image of $\rho_{\pi,p}$ is in $H_p(F_p)$ and is open in the $p$-adic topology. Otherwise, after a finite extension the image is in $\mathrm{Res}^{F_p}_{\QQ_p}H_p\cdot \mathbb{G}_{m,\QQ_p}$ and is open. In any case we deduce:
\begin{theo}\label{mainOrthogonal}
    Let $K$ be a totally real field and $\pi$ an orthogonal RAC automorphic representation of $\GL_4(\AA_k)$ that is not self-twisted. Assume that $\pi$ is not an Asai transfer. Then for every prime $p$, the (connected) semisimple part of the $\QQ_p$-Zariski closure of the image of $\rho_{\pi,p}$ is of the form $\mathrm{Res}^{F_p}_{\QQ_p} H_p$ where $H_p/F_p$ is a form of $\mathrm{SO}_4$. 
\end{theo}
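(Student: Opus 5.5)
The plan is to follow the non-self-dual case (Theorem \ref{mainnonself-dual}) almost verbatim, with $\hG=\mathrm{GSO}_4$ in place of $\GL_4$. First I will reduce to $\pi=\pi_1\boxtimes\pi_2$ with $\pi_1,\pi_2$ non-dihedral. Since $\pi$ is orthogonal, essentially self-dual, not self-twisted and not an Asai transfer, the classification of Theorem \ref{Asgari} and the table of Section \ref{LieClassification} leave only case 4. Indeed, if one of the $\pi_i$ were dihedral, then $\pi$ would be an induction, hence self-twisted, as explained before Proposition \ref{primitiveOrthogonal}. By Theorem \ref{FieldSplitsAll} I may take $\rho_{\pi,p}$ to be $\mathrm{GSO}_4(E_p)$-valued. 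Here I use that the tensor-product decomposition $\rho_{\pi,\lambda}=\rho_1\otimes\rho_2$ from Proposition \ref{primitiveOrthogonal} puts the image in $\mathrm{GSO}_4=(\GL_2\times\GL_2)/\GG_m$, possibly after enlarging $E$. The group $\mathrm{GSO}_4$ is split reductive, its standard representation is faithful and irreducible, and its center is $\GG_m$, so Lemma \ref{center} applies and the framework of Section \ref{SectionExtra} is available.

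Next I will make the representation valid. Lemma \ref{kill_det} supplies a finite Galois extension $L_0/K$ and a character $\chi$ with $\rho_{\pi,p}|_{\Gamma_{L_0}}\otimes\chi$ valued in $\hGd=\mathrm{SO}_4$. The first two conditions of Definition \ref{valid} hold for the compatible family: unramified outside a finite set, with characteristic polynomials in $E$ after twisting by a character with values in a number field. I may need to enlarge $E$ so that the twisting character is $E$-rational; passing to a finite extension and choosing $\chi$ of finite order times a power of the cyclotomic character should achieve this. The fourth condition is exactly Proposition \ref{primitiveOrthogonal}, which gives $\mathfrak{g}_\lambda^{ss}=\mathfrak{so}_4$, and this is unchanged by restriction to an open subgroup and by twisting. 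Then Corollary \ref{bigImCor} shows that the image of a further open subgroup $\Gamma_L$ is open in $H_p(F_p)$, where $H_p=(\mathrm{Res}^{E_p}_{F_p}\mathrm{SO}_4)^{tw_f(\Gamma)}$ is a form of $\mathrm{SO}_4$ by \cite[Proposition 2.6]{shavali2025image}.

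Finally I will pass from $\Gamma_L$ back to $\Gamma_K$ and recover the semisimple part. The twisting character is central and valued in $\GG_m$, and $\Gamma_L$ has finite index in $\Gamma_K$. So the identity component of the $\QQ_p$-Zariski closure of $\rho_{\pi,p}(\Gamma_K)$ is contained in $\mathrm{Res}^{F_p}_{\QQ_p}H_p\cdot\GG_{m}$. Openness then forces this identity component to be $\mathrm{Res}^{F_p}_{\QQ_p}H_p$ times a central torus, whose derived group is $\mathrm{Res}^{F_p}_{\QQ_p}H_p$.

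I expect the main obstacle to be the Goursat step inside Proposition \ref{LieAlgImage}. That proof used simplicity of $\widehat{\mathfrak g}^{\mathrm{der}}$, but $\mathfrak{so}_4\simeq\mathfrak{sl}_2\times\mathfrak{sl}_2$ is not simple. A subalgebra of $\mathfrak{so}_4\times\mathfrak{so}_4$ surjecting onto both factors could identify only one $\mathfrak{sl}_2$-factor of $\rho_\sigma$ with one $\mathfrak{sl}_2$-factor of $\rho_\tau$. That would mean a relation such as $\rho_{1,\sigma}\simeq\rho_{2,\tau}\otimes(\text{character})$ between the two-dimensional pieces, without $\rho_\sigma\simeq\Phi(\rho_\tau)$. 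To handle this I plan to apply Goursat factor by factor to the four $\mathfrak{sl}_2$'s. A partial identification of this kind is an inner-twist or outer-twist of a single $\pi_i$, possibly involving the other $\pi_j$, as opposed to an extra-twist of $\pi$. I will need to show that such coincidences are either excluded or already accounted for by the group $\Gamma$; where they are not, the resulting $H_p$ must be modified to a form of $\mathrm{SO}_4$ built from the separate twist groups of $\pi_1$ and $\pi_2$, in the spirit of Ribet's $\GL_2$ theory.
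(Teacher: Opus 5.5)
Your first three paragraphs are exactly the paper's proof. It takes $\hG=\mathrm{GSO}_4$, gets condition (4) of Definition~\ref{valid} from Proposition~\ref{primitiveOrthogonal}, normalizes with Lemma~\ref{kill_det}, and then simply invokes Corollary~\ref{bigImCor}. The obstacle you single out is genuine, and the paper does not address it. The proof of Proposition~\ref{LieAlgImage} uses Goursat in the form ``a proper subalgebra of $\widehat{\mathfrak g}^{\mathrm{der}}\times\widehat{\mathfrak g}^{\mathrm{der}}$ surjecting onto both factors is the graph of an isomorphism''. It explicitly says ``since $\widehat{\mathfrak g}^{\mathrm{der}}$ is simple'', and that fails for $\mathfrak{so}_4\simeq\mathfrak{sl}_2\times\mathfrak{sl}_2$. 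There is also a related problem. The outer automorphism of $\mathrm{GSO}_4$ is induced by conjugation by an element of $\mathrm{GO}_4\setminus\mathrm{GSO}_4$, so it is invisible on $\GL_4$-isomorphism classes. Hence the uniqueness in Lemma~\ref{properties}(2) does not transfer verbatim either.

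The gap in your proposal is the last step, and it cannot be closed the way you plan. The partial identifications really occur without coming from extra-twists of $\pi$, and then the conclusion is false. So neither ``excluded or accounted for by $\Gamma$'' nor ``replace $H_p$ by another form of $\mathrm{SO}_4$'' is available. Here is a counterexample. Take $K=\QQ$, and let $f_1$ be a non-CM weight-$2$ newform with Hecke field $\QQ(\sqrt5)$ and no nontrivial inner twists (e.g.\ the one of level $23$). Let $f_2=\Delta$, and let $\pi$ be $\pi_{f_1}\boxtimes\pi_{f_2}$ twisted by a suitable power of $|\det|$.
\begin{itemize}
\item $\pi$ is RAC (regular since the Hodge--Tate weights $\{0,1,11,12\}$ are distinct), orthogonal, not self-twisted and not an Asai transfer.
\item $\rho_{\pi,\lambda}\simeq\rho_{f_1,\lambda}\otimes\rho_{\Delta,p}$ up to a cyclotomic twist, with the same $\rho_{\Delta,p}$ for both embeddings $\lambda,\lambda'$ of $\QQ(\sqrt5)$.
\item The nontrivial automorphism of $\QQ(\sqrt5)$ is not an extra-twist. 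By uniqueness of the tensor decomposition it would make $\rho_{f_1,\lambda}$ and $\rho_{f_1,\lambda'}$ potentially twist-equivalent. Hence $F=\QQ(\sqrt5)$, and the statement predicts a semisimple part of $\QQ_p$-dimension $12$.
\item By \cite{ribet1980twists} and Goursat, the semisimple Lie algebra of $\rho_{\pi,p}(\Gamma_\QQ)$ is $\{(X_1\otimes 1+1\otimes Y,\ X_2\otimes 1+1\otimes Y)\}\simeq\mathfrak{sl}_2^3$. Here $f_1$ and $\Delta$ are unrelated because their Hodge--Tate weights differ.
\end{itemize}
This is exactly the non-graph subalgebra you feared. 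It has dimension $9$, which is not divisible by $6$, so it is not $\mathrm{Res}^{F'_p}_{\QQ_p}$ of a form of $\mathrm{SO}_4$ for any $F'$, and for every $p$.

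What your factor-by-factor approach does prove is a statement about $\pi_1$ and $\pi_2$ separately. For $K=\QQ$, regularity forces $\pi_1$ and $\pi_2$ to have different weights. So the extra-twists of $\pi$ are exactly the simultaneous inner twists, and $F=F_1F_2$, where $F_i$ is the field fixed by the inner twists of $\pi_i$. The semisimple part is the image of $\mathrm{Res}^{F_{1,p}}_{\QQ_p}H_{1,p}\times\mathrm{Res}^{F_{2,p}}_{\QQ_p}H_{2,p}$, with $H_{i,p}$ forms of $\SL_2$ (Ribet--Momose). It has dimension $3([F_1:\QQ]+[F_2:\QQ])$, which equals $6[F:\QQ]$ if and only if $F_1=F_2$. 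Under that hypothesis your Goursat argument on the individual $\mathfrak{sl}_2$-factors goes through and gives the stated form. For general $K$, the corresponding hypothesis is that any potential twist-equivalence between a tensor factor of $\rho_{\pi,\lambda}$ and one of $\rho_{\pi,\mu}$ forces one between the complementary factors.
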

\begin{rem}
    One can compare our result in this case with \cite[Appendix C]{sweeting2025bloch}, where the image in $\GL_2 \times \GL_2$ is studied. The condition assumed in \cite[Theorem C.3.6]{sweeting2025bloch} is exactly to exclude outer twists. 
\end{rem}

\subsubsection{Symplectic case}
Assume that $\pi$ is symplectic and $K=\QQ$. In other words, $\pi$ comes from the transfer from the group $G=\mathrm{GSp}_4$ to $\GL_4$. Therefore, the Galois representation attached to it has values in the Langlands dual $\hG=\mathrm{GSp_4}$:
\[
\rho_{\pi,p}:\Gamma_\QQ \rightarrow \mathrm{GSp_4}(E_p) \subset \GL_4(E_p).
\]
By Proposition \ref{primitiveSymplectic}, the semisimple part of the  Lie algebra of the image of each $\rho_{\pi,\lambda}$ is $\mathfrak{sp}_4$ for all but finitely many $\lambda$. Therefore, by Lemma \ref{kill_det}, after a finite extension and a twist, $\rho_{\pi,\lambda}$ becomes valid in the sense of Definition \ref{valid} for the group $\hG=\mathrm{GSp_4}$. Therefore, the image of $\rho_{\pi,p}$ is described by Corollary \ref{bigImCor}. 

The group $\mathrm{GSp}_4$ has no non-trivial outer automorphisms, so $\pi$ only has inner-twists in this case. These inner-twists form a group $\Gamma$ and induce a twisted action (given by a 1-cocycle $f$) using which we define 
$$H_p \coloneqq (\mathrm{Res}^{E_p}_{F_p} \mathrm{Sp}_4)^{tw_f(\Gamma)}$$
where $F=E^\Gamma$. The group $H_p$ is an inner form of $\mathrm{Sp}_4$ that splits over $E_p$. If the central character of $\pi$ is finite, then after a finite extension the image of $\rho_{\pi,p}$ is in $H_p(F_p)$ and is open in the $p$-adic topology. Otherwise, after a finite extension the image is in $\mathrm{Res}^{F_p}_{\QQ_p}H_p\cdot \mathbb{G}_{m,\QQ_p}$ and is open again. In any case we deduce:
\begin{theo}\label{mainSymplectic}
    Let $\pi$ be a symplectic RAC automorphic representation of $\GL_4(\AA_\QQ)$ that is not self-twisted. Assume that $\pi$ is not a symmetric cube transfer. Then for all but finitely many primes $p$, the (connected) semisimple part of the $\QQ_p$-Zariski closure of the image of $\rho_{\pi,p}$ is of the form $\mathrm{Res}^{F_p}_{\QQ_p} H_p$ where $H_p/F_p$ is a form of $\mathrm{Sp}_4$. 
\end{theo}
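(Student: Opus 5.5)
We will deduce Theorem \ref{mainSymplectic} from Corollary \ref{bigImCor}, applied with $\hG=\mathrm{GSp}_4$, in the same way as for Theorems \ref{mainnonself-dual} and \ref{mainOrthogonal}.

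\emph{Step 1: the family lands in $\mathrm{GSp}_4$.} Since $\pi$ is symplectic, not self-twisted and not a symmetric cube, we are in case 6 of the table in Section \ref{LieClassification}, i.e.\ $\pi$ is of primitive $\mathrm{GSp}_4$ type. By Theorem \ref{FieldSplitsAll} we enlarge $E\supseteq\QQ(\pi)$ so that every $\rho_{\pi,\fp}$ takes values in $\GL_4(E_\fp)$. By Proposition \ref{primitiveSymplectic}, for all but finitely many $\lambda$ (hence for all $p$ outside a finite set) each $\rho_{\pi,\lambda}$ is irreducible, with $\mathfrak g_\lambda^{ss}=\mathfrak{sp}_4$. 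Such a $\rho_{\pi,\lambda}$ preserves a symplectic form $\langle\,,\rangle$ up to a multiplier, which we may take to be the standard one after conjugation. The delicate point is to do this simultaneously over $E_p$, rather than over $\overline{E_\lambda}$. Because $\rho_{\pi,\fp}\simeq\rho_{\pi,\fp}^\vee\otimes\nu$ is absolutely irreducible, the pairing is unique up to scalar and is defined over $E_\fp$. Hence, after conjugating in $\GL_4(E_\fp)$, we get $\rho_{\pi,p}:\Gamma_\QQ\to\mathrm{GSp}_4(E_p)$. I expect this step to be a routine check.

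\emph{Step 2: validity.} Apply Lemma \ref{kill_det}: pass to a finite Galois extension $L_0/\QQ$ and twist by a character $\mu$ so that the multiplier becomes trivial. Then $\rho':=\rho_{\pi,p}|_{\Gamma_{L_0}}\otimes\mu$ lands in $\mathrm{Sp}_4(E_p)$. We check the conditions of Definition \ref{valid} for $\rho'$:
\begin{enumerate}
\item condition (1) follows from finite ramification of the compatible family;
\item condition (3) holds by construction;
\item condition (4) holds because $\mathfrak g_\lambda^{ss}=\mathfrak{sp}_4$ and twisting only changes the centre, so the Lie algebra of $\rho'_\lambda(\Gamma_{L_0})$ is exactly $\mathfrak{sp}_4$;
\item condition (2), $E$-rationality of the Frobenius characteristic polynomials, is the main obstacle, since the twist $\mu$ need not be $E$-valued.
\end{enumerate}
For condition (2), I would first try to choose $\mu$ rationally. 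The multiplier $\nu$ is an algebraic Hecke character (it is a weak abelian summand, or directly $\omega_\pi$ up to a square). Hence $\nu=\varepsilon\cdot\epsilon_p^{w}$ with $\varepsilon$ of finite order. After enlarging $L_0$ to kill $\varepsilon$, and enlarging $E$ by a square root of $p$-power cyclotomic values (or replacing $\nu$ by $\epsilon_p^{w}$ with $w$ even after a quadratic twist), we can take $\mu=\epsilon_p^{-w/2}$. This character has Frobenius values in $\QQ$, so the characteristic polynomials stay $E$-rational. If $w$ is odd, we replace $E$ by $E(\sqrt{\ell})$-type adjustments; failing that, we work with the adjoint form via $\mathrm{ad}$. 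The adjoint form has $E$-rational traces and the same Lie algebra, which suffices since we only need the derived part.

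\emph{Step 3: apply the extra-twist machinery.} The group $\mathrm{Out}(\mathrm{GSp}_4)$ is trivial, so every extra-twist is an inner-twist $(\sigma,\chi)$. Lemma \ref{properties} gives $\Gamma=\mathrm{Gal}(E/F)$, and the cocycle $f$ takes values in inner automorphisms. Hence $H_p=(\mathrm{Res}^{E_p}_{F_p}\mathrm{Sp}_4)^{tw_f(\Gamma)}$ is an inner form of $\mathrm{Sp}_4$ over $F_p$. Corollary \ref{groupH} and Proposition \ref{LieAlgImage} give $\mathfrak g=\mathrm{Lie}(H_p)$, and Corollary \ref{bigImCor} says the image of a finite-index subgroup of $\Gamma_\QQ$ is open in $H_p(F_p)$.

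\emph{Step 4: untwist.} Twisting by the central character $\mu$ and restricting to the finite-index subgroup $\Gamma_{L_0}$ change neither the identity component of the derived group nor the Lie algebra of the $\QQ_p$-Zariski closure. Therefore the connected semisimple part of the $\QQ_p$-Zariski closure of $\rho_{\pi,p}(\Gamma_\QQ)$ has Lie algebra $\mathrm{Lie}(\mathrm{Res}^{F_p}_{\QQ_p}H_p)$ and is contained in $\mathrm{Res}^{F_p}_{\QQ_p}H_p\cdot\GG_m$. The group $H_p$ is simply connected, being a form of $\mathrm{Sp}_4$, so the connected semisimple part equals $\mathrm{Res}^{F_p}_{\QQ_p}H_p$. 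This holds for all $p$ outside the finite exceptional set coming from Proposition \ref{primitiveSymplectic}.
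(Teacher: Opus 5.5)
Your route is the paper's: Proposition \ref{primitiveSymplectic} for the Lie algebra, Lemma \ref{kill_det} to reach a valid representation, Corollary \ref{bigImCor} with $\mathrm{Out}(\mathrm{GSp}_4)$ trivial so that $H_p$ is an inner form of $\mathrm{Sp}_4$, and then undoing the restriction and the central twist. The paper simply asserts that the twisted representation is valid, so your worry about condition (2) of Definition \ref{valid} goes beyond what it writes.

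However, your resolution of that worry fails in exactly the case where it is needed. The case $w$ odd does occur for regular symplectic $\pi$: the Hodge--Tate weights $a_1>a_2>a_3>a_4$ satisfy $a_1+a_4=a_2+a_3=w$, for example $\{0,1,2,3\}$. In that case no $E$-rational twist can help. Such a twist $\psi$ is a finite-order character times an integral power of $\epsilon_p$, and replacing $\nu$ by $\nu\psi^2$ changes $w$ by an even integer. In particular, a quadratic twist cannot make $w$ even. The ``$E(\sqrt{\ell})$-type adjustments'' do not give a number field either: degree-one primes of $L_0$ have norm $\ell$, so you would need $\sqrt{\ell}$ for infinitely many $\ell$. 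The adjoint fallback is viable in principle, but it is not what your Step 3 then uses. Corollary \ref{bigImCor} applied to $\mathrm{ad}\rho$ produces a form of $\mathrm{PGSp}_4$, not of $\mathrm{Sp}_4$. You would still have to pass to the simply connected cover and identify it with the derived group of the monodromy group of $\rho_{\pi,p}$.

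The clean fix is to notice that the twist does not interfere with condition (2) at all. On a finite-index subgroup, a square root of $\epsilon_p^{-w}$ exists with values in $\QQ_p^\times$, via the $p$-adic logarithm. Since $\QQ_p^\times$ sits diagonally in $E_p^\times$, this character is fixed by $\mathrm{Aut}(E)$ and is the same in every $\lambda$-component. Two consequences follow:
\begin{itemize}
\item ${}^\sigma(\rho\otimes\epsilon_p^{-w/2})={}^\sigma\rho\otimes\epsilon_p^{-w/2}$, so the extra-twists of $\rho'$ are exactly those of $\rho_{\pi,p}|_{\Gamma_{L_0}}$, with the same field $F$;
\item $\rho'_\lambda\simeq\rho'_{\lambda'}$ on an open subgroup if and only if $\rho_\lambda\simeq\rho_{\lambda'}$ there.
\end{itemize}
Condition (2) enters only through Lemmas \ref{cent} and \ref{isoRep}. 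These can therefore be run on the untwisted, $E$-rational $\rho_{\pi,p}|_{\Gamma_{L_0}}$. Conditions (3) and (4) are needed only for $\rho'$, in Corollary \ref{groupH} and Proposition \ref{LieAlgImage}. With that replacement, your Steps 3 and 4 go through.

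A smaller point about Step 4: the equality of the connected derived group with $\mathrm{Res}^{F_p}_{\QQ_p}H_p$ does not come from simple connectedness. It follows from the containment in $\mathrm{Res}^{F_p}_{\QQ_p}H_p\cdot\GG_m$ together with equality of Lie algebras, since connected subgroups in characteristic $0$ are determined by their Lie algebras.
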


\begin{rem}
    Similar to the non-self-dual case, \cite[Theorem 3.17]{larsen1995maximality} easily implies that for a density one set of primes, the image of the residual representation $\overline{\rho_{\pi,\lambda}}$ contains $\mathrm{Sp}_4(\FF_p)$. This recovers \cite[Theorem 1.2 (ii)]{weiss2022images} for a density one set of primes. 
\end{rem}

Finally we deduce Theorem B from the above.
\begin{cor}\label{mainBigIm}
    Let $K$ be a totally real field and $\pi$ be a RAC automorphic representation of $\GL_4(\AA_K)$. If $\pi$ is symplectic then assume $K=\QQ$. Suppose that $\pi$ is not self-twisted, not an Asai transfer, and not a symmetric cube. Let $G/\QQ_p$ be the $p$-adic arithmetic monodromy group of the Galois representation $\rho_{\pi,p}$ attached to $\pi$. Then for all but finitely many primes, $G^{\mathrm{der}}$ is of the form
    \[
    G^{\mathrm{der}} = \mathrm{Res}^{F_p}_{\QQ_p}(H_p) 
    \]
    where $F$ is the field fixed by the extra-twists and $H_p$ is a form of $\mathrm{Sp}_4$, $\mathrm{SO}_4$, or $\SL_4$ if $\pi$ is symplectic, orthogonal, or neither of them, respectively. Moreover, if $\pi$ is not symplectic, then this holds for all primes.
\end{cor}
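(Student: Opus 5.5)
The plan is a case analysis that reduces to the three previous theorems, using the table of Section \ref{LieClassification}. The hypotheses (not self-twisted, not an Asai transfer, not a symmetric cube) remove rows 1, 2, 3 and 5. First I would check which rows remain. A self-twist excludes inductions, so rows 1 and 2 are gone. If $\pi$ is essentially self-dual, then $\wedge^2\pi$ is not cuspidal by Theorem \ref{Asgari}. Excluding inductions and Asai transfers, $\pi$ is then either $\pi_1\boxtimes\pi_2$ or a generic transfer from $\mathrm{GSp}_4$.

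In the tensor case I would reduce to both $\pi_i$ non-dihedral. If one factor is dihedral, then, as computed in Section \ref{LieClassification}, $\pi$ is an induction, contradicting the assumption that $\pi$ is not self-twisted. Excluding $\mathrm{sym}^3$, the $\mathrm{GSp}_4$ case is exactly primitive symplectic type. If $\pi$ is not essentially self-dual, it is of primitive $\GL_4$ type by definition. So exactly three cases remain: orthogonal, symplectic with $K=\QQ$, and primitive.

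Next I would identify $G^{\mathrm{der}}$ with the connected semisimple part of the Zariski closure. I would invoke Theorems \ref{mainOrthogonal}, \ref{mainnonself-dual} and \ref{mainSymplectic} respectively; the first two hold for every $p$ and the last for all but finitely many $p$. This matches the "moreover" clause. What must be checked is that $G^{\mathrm{der}}$ equals $\mathrm{Res}^{F_p}_{\QQ_p}H_p$, with $F$ the fixed field of all extra-twists.

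By Corollary \ref{bigImCor}, after passing to the finite extension $L$ and twisting, the image is open in $H_p(F_p)$, or in $\mathrm{Res}H_p\cdot\GG_m$ when the central character is infinite. Hence the identity component $G^\circ$ equals $\mathrm{Res}^{F_p}_{\QQ_p}H_p$, possibly times a central torus. The twist by a $\hZ$-valued character from Lemma \ref{kill_det} only changes the central part, so it does not affect the derived group. Restriction to $\Gamma_L$ only affects the component group. Therefore $(G^\circ)^{\mathrm{der}}=\mathrm{Res}^{F_p}_{\QQ_p}H_p$, since $H_p$ is semisimple and its Weil restriction is its own derived group.

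The main obstacle I expect is a subtle point about the extra-twist field. The field $F$ in Corollary \ref{bigImCor} is defined for the valid, twisted representation over $L$, whereas the statement refers to the extra-twists of $\pi$ itself over $K$. Since $\chi_\sigma$ is finite (Lemma \ref{properties}) and $L$ kills all of them, I would use Lemma \ref{cent} to compare the two fields. The trace sums over a cofinal set of $L'$ compute the same field $F$. Twisting by a central character acts compatibly, so an extra-twist of the twisted representation over $L$ lifts via Lemma \ref{charTwist} to an extra-twist of $\rho_{\pi,p}$ over $K$, and conversely. A secondary point is that $G^{\mathrm{der}}$ should be read as the derived group of the identity component; I would state this interpretation explicitly.
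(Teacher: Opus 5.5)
Your proposal is correct and follows the paper's argument: the paper proves the corollary simply by combining Theorems \ref{mainnonself-dual}, \ref{mainOrthogonal} and \ref{mainSymplectic}, and your case analysis via Theorem \ref{Asgari} and your remarks on restriction to $\Gamma_L$ and central twists only make explicit what the paper leaves implicit. One caveat concerns your extra comparison of extra-twist fields, a point the paper does not address: Lemma \ref{charTwist} does not apply verbatim, since untwisting only gives $\rho|_{\Gamma_L}\simeq\Phi({}^\sigma\rho)|_{\Gamma_L}\otimes\chi''$ with $\chi''$ a character of $\Gamma_L$ that need not become trivial on any finite extension (e.g.\ for outer twists when $\det\rho_{\pi,p}$ has infinite order), so you should rerun its Schur argument using that $\chi''$ is $\Gamma_K$-invariant, which holds because $\Phi({}^\sigma\rho)|_{\Gamma_L}$ is Lie-irreducible and hence has no nontrivial self-twists.
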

\begin{proof}
    This is a combination of Theorems \ref{mainnonself-dual}, \ref{mainOrthogonal}, and \ref{mainSymplectic}.
\end{proof}

\bibliographystyle{amsplain}
\bibliography{Final}

\end{document}